  \theoremstyle{plain}
  \newtheorem{Theorem}{Theorem}[section]
  \newtheorem{Lemma}{Lemma}[section]
  \newtheorem{Corollary}{Corollary}[section]
    \theoremstyle{remark}
  \newtheorem{remark}{Remark}
  \numberwithin{equation}{section}
  \numberwithin{figure}{section}
  \numberwithin{remark}{section}
\renewcommand{\baselinestretch}{1.00}
\begin{document}

\title{Oblique boundary value problems for augmented Hessian equations II}

\author{Feida Jiang}
\address{Yau Mathematical Sciences Center, Tsinghua University, Beijing 100084, P.R. China}
\email{jiangfeida@math.tsinghua.edu.cn}

\author{Neil S. Trudinger}
\address{Centre for Mathematics and Its Applications, The Australian National University, Canberra ACT 0200, Australia}
\email{Neil.Trudinger@anu.edu.au}

\thanks{Research supported by  National Natural Science Foundation of China (No.11401306),  Australian Research Council (No.DP1094303), China Postdoctoral Science Foundation (No.2015M571010) and Jiangsu Natural Science Foundation of China (No.BK20140126).}


\date{\today}

\keywords{Oblique boundary value problem, augmented Hessian equations, second derivative estimates, gradient estimates}

\abstract {In this paper, we continue our investigations into the global theory of  oblique boundary value problems for augmented Hessian equations. We construct a global barrier function in terms of an admissible function in a uniform way when the matrix function in the augmented Hessian is only assumed regular. This enables us to derive global second derivative estimates in terms of boundary estimates which are then obtained by strengthening the concavity or monotonicity conditions in our previous work on the strictly regular case. Finally we give some  applications to existence theorems which embrace standard Hessian equations as special cases.}

\endabstract

\maketitle


\baselineskip=12.8pt
\parskip=3pt
\renewcommand{\baselinestretch}{1.38}

\section{Introduction}\label{Section 1}
\vskip10pt

This paper is a continuation of our previous paper \cite{JT-oblique-I} on  augmented Hessian partial differential equations of the form
\begin{equation}\label{1.1}
\mathcal{F}[u]:=F[D^2u-A(\cdot,u,Du)]=B(\cdot,u,Du), \quad {\rm in} \ \Omega,
\end{equation}
together with oblique boundary conditions
\begin{equation}\label{1.2}
\mathcal{G}[u]:=G(\cdot,u,Du)=0, \quad {\rm on} \ \partial\Omega.
\end{equation}
Here $\Omega\subset \mathbb{R}^n$ is a  smooth bounded domain, $Du$ and $D^2u$ denote the gradient vector and the Hessian matrix respectively of the function $u\in C^2(\Omega)$ respectively, $A$ is a $n\times n$ symmetric matrix function on $\Omega\times \mathbb{R}\times \mathbb{R}^n$, $B$ and $G$ are scalar valued functions on $\Omega\times \mathbb{R}\times \mathbb{R}^n$ and $\partial\Omega\times \mathbb{R}\times \mathbb{R}^n$ respectively and $F$ is a scalar valued function on $\mathbb{S}^n$, the linear space of $n\times n$ symmetric matrices. We shall denote the points in $\Omega, \mathbb{R}, \mathbb{R}^n$ and $\mathbb{S}^n$ by $x, z, p$ and $r$, respectively. Assuming $G$ is  differentiable with respect to $p$, the boundary condition \eqref{1.2} is  oblique with respect to $u$ if
\begin{equation}\label{obliqueness}
G_p(\cdot,u,Du)\cdot \nu \ge \beta_0, \quad {\rm on}\ \partial\Omega,
\end{equation}
where $\nu$ is the unit inner normal vector field on $\partial\Omega$ and $\beta_0$ is a positive constant.
We simply call $G$ (or $\mathcal{G}$) oblique, if $G_p\cdot \nu>0$ on $\partial\Omega\times \mathbb{R}\times \mathbb{R}^n$. For the functions $F, G, A$ and $B$, we will make appropriate smoothness assumptions as necessary in the context.

Comparing with the standard Hessian equations, where $A=0$, the key ingredients for the regularity of solutions to the augmented Hessian equations are certain convexity conditions on the matrix $A$ with respect to the $p$ variable.  The Heinz-Lewy counterexample \cite{Schulz1990} shows that there is no $C^1$ regularity even for the Monge-Amp\`ere case in two dimensions in equation \eqref{1.1} without suitable structures on $A$. Assuming  $A$ is twice differentiable with respect to  $p$, we call the matrix $A$ co-dimensional one convex (strictly co-dimensional one convex) with respect to $p$, if
\begin{equation}\label{MTW}
\sum_{i,j,k,l}A^{kl}_{ij}(x,z,p)\xi_i\xi_j\eta_k\eta_l \ge 0, \ (>0),
\end{equation}
for all $(x,z,p)\in \Omega \times \mathbb{R}\times \mathbb{R}^n$, $\xi,\eta\in \mathbb{R}^n$, $\xi\cdot\eta=0$, where $A^{kl}_{ij}:=D^2_{p_kp_l}A_{ij}$.
For simplicity, we call the matrix $A$ regular (strictly regular) if $A$ is co-dimensional one convex (strictly co-dimensional one convex) with respect to $p$. In \cite{JT-oblique-I}, we have established  second derivatives estimates and resultant classical solvability results for the oblique problem \eqref{1.1}-\eqref{1.2} when the matrix $A$ is strictly regular. In this paper we consider the case when $A$ is only assumed regular. Our results depend on offsetting the lack of strict regularity of $A$ by either uniform concavity conditions on $G$ with respect to $p$ or strong monotonicity  conditions on $G$ or $A$ with respect to $z$.

The organization of this paper is as follows. In Section \ref{Section 2}, extending our previous barrier constructions in \cite{JTY2013, JT2014, JTY2014}, we construct a barrier function in Lemma \ref{Lemma 2.1} from an admissible function in a uniform way when $A$ is regular. Such a construction is already indicated in \cite {JTX2015}. In Section \ref{Section 3}, we deal with the global second derivative estimate and the second order derivative estimate on the boundary in Section \ref{Subsection 3.1} and Section \ref{Subsection 3.2} respectively. In Section \ref{Subsection 3.1}, using the constructed barrier in Lemma \ref{Lemma 2.1}, we reduce the second derivative estimate to the boundary in Theorem \ref{Th3.1}, following the special cases of Monge-Ampere operators in \cite{JTY2013} and $k$-Hessian operators in \cite{JTY2014}. In Section \ref{Subsection 3.2}, the critical boundary estimates are those for the pure tangential second derivatives as the other second derivatives are already estimated in \cite{JT-oblique-I}. The pure tangential second derivative estimates are proved in Lemma \ref{Lemma 3.1} and Lemma \ref{Lemma 3.2} for nonlinear $\mathcal{G}$ and semilinear $\mathcal{G}$, respectively. Then we obtain the corresponding full second derivative estimates in Theorem \ref{Th3.2} and Theorem \ref{Th3.3}. As an application of Theorem \ref{Th3.2}, we also obtain a full second derivative estimate for the second boundary value problem for the augmented Hessian equation \eqref{1.1} in Corollary \ref{Cor 3.1}. In Section \ref{Section 4}, we first prove some corresponding gradient estimates for admissible solutions which supplement those in \cite {JTX2015} and \cite{JT-oblique-I}. We then establish the existence results for classical admissible solutions for the boundary value problem \eqref{1.1}-\eqref{1.2} in Theorems \ref{Th4.1} and \ref{Th4.2} via the method of continuity.

To avoid too many repetitions, all the definitions and notation in this paper, unless otherwise specified, will follow our part I \cite{JT-oblique-I}.
We also refer the reader to \cite{JT-oblique-I} for a more thorough introduction and  background information.

\section{Barrier constructions}\label{Section 2}
\vskip10pt

In this section, we construct  barrier functions from  admissible functions for general augmented Hessian operators with regular matrices $A$,  which are extensions of our original construction in the Monge-Amp\`ere case \cite{JTY2013}; see also \cite{JT2014,JTY2014}. They play the same role as the function $|x|^2$ in the standard Hessian case and replace the barriers arising from the ``$A$-boundedness''  conditions introduced for Monge-Amp\`ere type equations in \cite{Tru2006,TruWang2009}.

Let $\Omega$ be a bounded domain in Euclidean $n$-space $\mathbb{R}^n$ and $\Gamma$ a convex open set in the linear space of $n\times n$ symmetric matrices $\mathbb{S}^n$, which contains and is closed under addition of the positive cone $\Gamma_n=K^+$. We consider augmented Hessian operators of the form
\begin{equation}\label{3.1}
\mathcal{F}[u] = F(M[u])
\end{equation}
acting on functions $u\in C^2(\Omega)$, whose augmented Hessians $M[u]=D^2u-A(\cdot, u,Du)\in \Gamma$, where $F$ is a non-decreasing function in $C^1(\Gamma)$ and $A$ is a continuous mapping from $\bar\Omega\times\mathbb{R}\times \mathbb{R}^n\rightarrow \mathbb{S}^n$. We assume that $A$ is
 twice differentiable and regular with respect to $p$, with $D^2_pA\in C^0(\bar\Omega\times\mathbb{R}\times \mathbb{R}^n)$. Corresponding to the equivalent form of strict regularity expressed in inequality (1.20) in \cite{JT-oblique-I}, it will be convenient  to express  the regular condition in the form
 \begin{equation} \label{regular}
  A_{ij}^{kl}\xi_i\xi_j\eta_k\eta_l \geq - 2\bar\lambda |\xi||\eta|(\xi\cdot\eta),
\end{equation}
for all $\xi, \eta \in \mathbb{R}^n$, where  $\bar\lambda$ is a non-negative function in
$C^0(\bar\Omega\times \mathbb{R}\times \mathbb{R}^n)$, depending on $D^2_pA$. Then corresponding to inequality (2.4) in \cite{JT-oblique-I}, for any non-negative symmetric matrix $\{F^{ij}\}$ and $\epsilon \in (0,1]$, we have the inequality,
\begin{equation}\label{2.3}
 F^{ij}A_{ij}^{kl}\eta_k\eta_l \geq - \bar \lambda ( \epsilon \mathscr{T}|\eta|^2 + \frac{1}{\epsilon} F^{ij}\eta_i\eta_j),
 \end{equation}
 which will be useful throughout this paper.

The linearized operator of $\mathcal{F}$ is given by
\begin{equation}\label{3.3}
Lv:= F^{ij}(M[u])[D_{ij}v-A_{ij}^k(\cdot,u,Du)D_kv], \quad {\rm for} \  \ v\in C^2(\Omega),
\end{equation}
where $F^{ij}=F_{r_{ij}}=\frac{\partial F}{\partial r_{ij}}$ and $A_{ij}^k=D_{p_k}A_{ij}$. Clearly, the operator $\mathcal{F}$ satisfying F1 is elliptic with respect to $u$ for $M[u]\in \Gamma$ and $L$ is elliptic when $M[u]\in \Gamma$. We henceforth call $u$ admissible in $\Omega$, ($\bar \Omega$), if $M[u]\in \Gamma$ in $\Omega$, ($\bar \Omega$) respectively. We construct barrier functions for $L$ in terms of another given function $\bar u\in C^2(\bar \Omega)$, which is admissible in $\bar \Omega$ with respect to $u$ in the sense that $M_u[\bar u] := D^2 \bar u - A(\cdot, u, D\bar u) \in \Gamma$ in $\bar\Omega$. Clearly if $A$ is independent of $z$, then $M_u[\bar u] = M[\bar u]$ so that $\bar u$ is admissible with respect to $u$ if and only if $\bar u$ is admissible while if $A$ is non-decreasing in $z$, (non-increasing in $z$), then $M_u[\bar u] \ge M[\bar u]$ and $\bar u$ is admissible with respect to $u$ if $\bar u$ is admissible and $\bar u \ge u$, ($\le u$). Our arguments are divided into two cases:

\vspace{2mm}

\noindent (a) $F$ is concave, namely F2 holds. The argument here is the same as in our previous works, except we explicitly avoid the covering argument there. By fixing some $\bar \delta>0$, such that $(M_u[\bar u]-\bar \delta I)(\bar \Omega)\subset \Gamma$ and setting $w_{ij}=D_{ij}u-A_{ij}(\cdot,u,Du)$, $\bar w_{ij}=D_{ij}\bar u-A_{ij}(\cdot,u,D\bar u)$, we have
\begin{equation}\label{3.7}
\begin{array}{rl}
L(\bar u-u) =   \!\! & \!\! \displaystyle \bar \delta \mathscr{T} + F^{ij} \{ [\bar w_{ij} - \bar \delta \delta_{ij} - w_{ij}] +
A_{ij}(\cdot, u, D\bar u) - A_{ij}(\cdot, u,Du) -A^k_{ij} D_k(\bar u-u) \} \\
            \ge \!\! & \!\! \displaystyle \bar \delta \mathscr{T} + F^{ij}  [(\bar w_{ij} - \bar \delta \delta_{ij}) - w_{ij}]+ \frac{1}{2}F^{ij}A_{ij}^{kl}(\cdot,u,\hat p)D_k(\bar u-u)D_l(\bar u-u) \\
            \ge \!\! & \!\! \displaystyle \bar \delta \mathscr{T} + F(M_u[\bar u]-\bar \delta I) -\mathcal{F}[u] + \frac{1}{2}F^{ij}A_{ij}^{kl}(\cdot,u,\hat p)D_k(\bar u-u)D_l(\bar u-u),
\end{array}
\end{equation}
using  Taylor's formula, where $A_{ij}^{kl}=D^2_{p_kp_l}A_{ij}$, $\hat p=\theta Du+(1-\theta)D\bar u$ for some $\theta\in (0,1)$. Note that in the above cases when $M_u[\bar u] \ge M[\bar u]$, we can choose $\bar \delta$ to depend only on $\Omega,\bar u, A$ and $\Gamma$. Otherwise it would also depend on the modulus of continuity of $u$.
By direct calculation, we have
\begin{equation}\label{insert 1 between 37 38}
  \begin{array}{rl}
    Le^{K(\bar u-u)} = \!\! & \!\! \displaystyle  Ke^{K(\bar u-u)}[L(\bar u-u)+ KF^{ij}D_i(\bar u-u)D_j(\bar u-u)] \\
                     = \!\! & \!\! \displaystyle  Ke^{K(\bar u-u)} \{\bar \delta \mathscr{T}  + \frac{1}{2}F^{ij}A_{ij}^{kl}D_k(\bar u-u)D_l(\bar u-u)+ KF^{ij}D_i(\bar u-u)D_j(\bar u-u) \\
                       \!\! & \!\! \displaystyle + F(M_u[\bar u]-\bar \delta I) -\mathcal{F}[u]\},
  \end{array}
\end{equation}
From the regular condition \eqref{2.3}, we obtain
\begin{equation}\label{insert 2 between 37 38}
  \begin{array}{rl}
      \!\! & \!\! \displaystyle \bar \delta \mathscr{T} + \frac{1}{2}F^{ij}A_{ij}^{kl}D_k(\bar u-u)D_l(\bar u-u)+ KF^{ij}D_i(\bar u-u)D_j(\bar u-u)  \\
      \ge \!\! & \!\! \displaystyle  (\bar \delta - \frac{\bar \lambda\epsilon}{2} |D(\bar u-u)|^2)\mathscr{T} + (K-\frac{\bar \lambda}{2\epsilon})F^{ij}D_i(\bar u-u)D_j(\bar u-u) \ge \frac{\bar \delta}{2}\mathscr{T},
  \end{array}
\end{equation}
by successively fixing $\epsilon \le \bar \delta / \sup_{\Omega}(\bar \lambda|D(\bar u-u)|^2)$ and $K\ge \sup_{\Omega}\bar \lambda / (2\epsilon)$.
From \eqref{insert 1 between 37 38} and \eqref{insert 2 between 37 38}, we obtain
\begin{equation}\label{3.8}
\begin{array}{ll}
Le^{K(\bar u-u)} \!\! & \!\! \displaystyle \ge \delta \mathscr{T} + Ke^{K(\bar u-u)} \{ F(M_u[\bar u]-\bar \delta I) -\mathcal{F}[u]\}\\
                 \!\! & \!\! \displaystyle \ge \delta \mathscr{T} - C\mathcal{F}[u] -C',
\end{array}
\end{equation}
as in our previous constructions \cite{JTY2013, JT2014, JTY2014}, where $(M_u[\bar u]-\bar \delta I)(\bar \Omega)\subset \Gamma$ is used in the second inequality,   $K$, $\delta$ and $C^\prime$ are positive constants depending on $n, \Omega, D^2_p A, \bar \delta, |u|_{1;\Omega}$ and $|\bar u|_{1;\Omega}$  and $C^\prime$ is a further constant depending also on $M_u[\bar u]$ and $F$.

\vspace{2mm}

\noindent (b)
$M_u[\bar u]\in K^+$, that is there exists a positive constant $\bar \delta$ such that
\begin{equation}\label{3.4}
M_u[\bar u] \ge \bar \delta I, \quad {\rm in} \ \Omega.
\end{equation}
We then have in place of \eqref{3.7},
\begin{equation}\label{3.5}
\begin{array}{rl}
L(\bar u-u) = \!\! & \!\!\displaystyle  F^{ij}\{\bar w_{ij} - w_{ij} +
               A_{ij}(\cdot,u,D\bar u) - A_{ij}(\cdot,u,Du) - A_{ij}^kD_{k}(\bar u-u)\}\\
         \ge  \!\! & \!\!\displaystyle  F^{ij}[\bar w_{ij}-w_{ij}+\frac{1}{2}A_{ij}^{kl}(\cdot, u, \hat p)D_k(\bar u-u)D_l(\bar u-u)]
\end{array}
\end{equation}
where Taylor's formula is again used, $A_{ij}^{kl}=D^2_{p_kp_l}A_{ij}$, $\hat p=\theta Du+(1-\theta)D\bar u$ for some $\theta\in (0,1)$. Using the regular condition \eqref{2.3} as in \eqref{insert 2 between 37 38}, from \eqref{3.4} and \eqref{3.5}, we obtain
\begin{equation}\label{3.6}
Le^{K(\bar u-u)} \ge \delta \mathscr{T} - C F^{ij}w_{ij},
\end{equation}
where $K$, $\delta$ and $C$ are positive constants depending on $n, \Omega, D^2_p A, \bar \delta, |u|_{1;\Omega}$ and $|\bar u|_{1;\Omega}$.

\begin{remark}\label{Remark2.1}
(i) The constructed function $\eta:=e^{K(\bar u-u)}$ can be regarded as a global barrier function and is important in  second derivative estimates when $A$ is merely regular and not strictly regular; see \cite{JT2014,JTY2014}.

(ii) For the key inequality \eqref{insert 2 between 37 38}, we used the original form \eqref{MTW} of the regular condition on $A$ in our previous papers \cite{JTY2013, JT2014, JTY2014}. While in the above proof, we use the equivalent form \eqref{regular} (or \eqref{2.3})  so that we do not need to assume $D(\bar u-u)=(D_1(\bar u -u), 0, \cdots, 0)$ at any given point in $\Omega$ as before.

(iii) The term $F^{ij}w_{ij}$ in \eqref{3.6} is readily estimated from above in terms of $\mathcal{F}[u]$ under further hypotheses on $F$. In particular if $F$ is homogeneous of degree one, we have again \eqref{3.8} with $C^\prime = 0$.  If $F$ is concave, we recover \eqref{3.8} by taking $\mu = \frac{\delta}{2}$ in inequality (1.9) in \cite{JT-oblique-I}. More generally if $F$ satisfies
\begin{equation}\label{homogeneity}
r\cdot F_r \le F +\mu\mathscr{T} + C_\mu
\end{equation}
in $\Gamma$, for any positive constant $\mu$ and some constant $C_\mu$, depending on $\mu$, then we also obtain \eqref{3.8} in case (b), with $C^\prime$ depending on $C_\mu$ for some $\mu$ depending on $\delta$ and $C$ in \eqref {3.6}. In the special case, $F=\log \det$, $\Gamma=K^+$, we have $F^{ij}w_{ij}=n$, whence the lower bound in \eqref{3.8} is independent of $\mathcal F[u]$. We remark also that in case (b), the matrix $\{F_{ij}\}$ can be replaced by any non-negative matrix function.

(iv) Appropriate admissible functions $\bar u$ exist for matrix functions $A$ arising in optimal transportation or the more general generated prescribed Jacobian equations, see \cite{JT2014}.  For $A=\frac{1}{2}a_{kl}(x,z)p_kp_lI -a_0(x,z)p\otimes p$ with $a_{kl}, a_0\in C^2(\bar\Omega \times \mathbb{R})$ and $\{a_{kl}\}\ge0$, $a_0\ge 0$ in $\bar\Omega \times \mathbb{R}$, quadratic functions $\bar u = c_0+ \frac{1}{2}\epsilon |x-x_0|^2$ will be admissible for arbitrary constants $c_0$ and points $x_0\in \Omega$ and sufficiently small $\epsilon$, see Section 4.2 in \cite{JT-oblique-I}.
\end{remark}

So far, we have only used the operator $\mathcal{F}$ in \eqref{3.1}. Now we assume that $u$ is also a supersolution of equation \eqref{1.1} with a given right hand side $B\in C^0(\bar\Omega\times\mathbb{R}\times \mathbb{R}^n)$, differentiable with respect to $p$ with $D_pB\in C^0(\bar\Omega\times\mathbb{R}\times \mathbb{R}^n)$, and define the corresponding linearized operator
\begin{equation}\label{Linearized op}
\mathcal{L}:= L -D_{p_k}B(\cdot,u,Du)D_k.
\end{equation}
If $F$ satisfies condition F$5^+$, that is $F(r)\rightarrow\infty$ as $r \rightarrow \infty$ uniformly for $F(r) \in \mathcal I$ for any interval $\mathcal I \subset\subset F(\Gamma)$, we obtain from \eqref {3.8},
\begin{equation}\label{2.10}
\mathcal{L}\eta \ge \delta (\mathscr{T} + 1),
\end{equation}
for a further positive constant  $\delta$, depending also on $B$, provided $D^2u$ is sufficiently large. In general without assuming F$5^+$, we can achieve the same inequality in case (a) if F1 holds, that is  $F_r> 0$ in $\Gamma$, $B$ is convex in $p$ and $\bar u$ is also a subsolution of an appropriate equation, in the sense that
\begin{equation}\label{subsolution}
F(M_u[\bar u]) \ge B(\cdot,u,D\bar u), \quad {\rm in} \ \Omega.
\end{equation}
By a standard perturbation argument, using the linearized operator and the mean value theorem, as in \cite{JTY2013},  we can assume the differential inequality \eqref{subsolution} is strict so that  for $\bar \delta$ sufficiently small
$(M[\bar u]-\bar \delta I)(\bar \Omega)\subset \Gamma$ satisfies
\begin{equation}\label{2.12}
F(M_u[\bar u]-\bar \delta I) > B(\cdot, u, D\bar u),  \quad {\rm in} \ \bar\Omega.
\end{equation}
From the first inequality in \eqref{3.8}, we then have for a further positive constant  $\delta$,
\begin{equation}\label{barrier without - C}
\begin{array}{rl}
\mathcal{L}\eta
                \!\!&\!\! \displaystyle \ge \delta  (\mathscr{T} +1) + C\{B(\cdot,u, D\bar u) - B(\cdot, u,Du) - D_{p_k}B(\cdot,u,Du)D_k(\bar u-u)\}\\
                \!\!&\!\! \displaystyle \ge \delta (\mathscr{T} + 1),
\end{array}
\end{equation}
where the convexity of $B$ in $p$ is used to obtain the last inequality.

We summarise the above constructions in the following lemma.

\begin{Lemma}\label{Lemma 2.1}
Let $u\in C^2(\bar \Omega)$ be an admissible function of equation \eqref{1.1} and $\bar u\in C^2(\bar \Omega)$ be  admissible with respect to $u$. Assume $\mathcal{F}$ satisfies F1 and $A$ is regular.

{\rm (i)} If either {\rm (a)} F2 holds or {\rm (b)}  $M_u[\bar u]\in K^+ \subset \Gamma$ and \eqref{homogeneity} holds, then the estimate \eqref{3.8} holds for positive constants $K, \delta, C$ and $C^\prime$ depending on  $n,F, \Omega,  A,  \bar u$ and $|u|_{1;\Omega}$.

{\rm (ii)} If u is a supersolution of equation \eqref{1.1} with $B$ convex in $p$, F2 holds and $\bar u$ satisfies the subsolution condition \eqref{subsolution}, then the estimate \eqref{2.10} holds for positive constants $K$ and $\delta$,
depending on  $n,F, \Omega,  A,B, \bar u$ and $|u|_{1;\Omega}$.
\end{Lemma}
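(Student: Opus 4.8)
The plan is to assemble the lemma directly from the two case analyses (a) and (b) developed above, with $\eta := e^{K(\bar u - u)}$ as the barrier, keeping careful track of the dependence of the constants on the listed data.

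For part (i), case (a), I would begin with the identity for $L(\bar u - u)$ in \eqref{3.7}, expanding $A_{ij}(\cdot,u,D\bar u) - A_{ij}(\cdot,u,Du)$ by Taylor's formula to second order in $p$ and invoking the concavity F2, in the form of inequality (1.9) of \cite{JT-oblique-I}, to replace $F^{ij}[(\bar w_{ij} - \bar\delta\delta_{ij}) - w_{ij}]$ by $F(M_u[\bar u] - \bar\delta I) - \mathcal{F}[u]$. Differentiating $\eta$ twice produces the extra nonnegative term $KF^{ij}D_i(\bar u - u)D_j(\bar u - u)$, giving \eqref{insert 1 between 37 38}. The key step is then to absorb the possibly negative Taylor remainder $\tfrac12 F^{ij}A_{ij}^{kl}D_k(\bar u - u)D_l(\bar u - u)$ coming from $A$: applying the consequence \eqref{2.3} of the regular condition, I successively fix $\epsilon$ small enough that $\bar\delta - \tfrac{\bar\lambda\epsilon}{2}|D(\bar u - u)|^2 \ge \tfrac{\bar\delta}{2}$ pointwise, and then $K$ large enough that the coefficient of $F^{ij}D_i(\bar u - u)D_j(\bar u - u)$ is nonnegative; this gives \eqref{insert 2 between 37 38} and hence \eqref{3.8}. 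Case (b) runs in parallel, except that $M_u[\bar u] \ge \bar\delta I$ is used in place of concavity, so that one arrives at \eqref{3.6}, namely $L\eta \ge \delta\mathscr{T} - CF^{ij}w_{ij}$; the homogeneity hypothesis \eqref{homogeneity} --- or homogeneity of degree one, or concavity via $\mu = \delta/2$ in (1.9) of \cite{JT-oblique-I} --- then bounds $F^{ij}w_{ij} = r\cdot F_r$ above by $\mathcal{F}[u] + \mu\mathscr{T} + C_\mu$, and absorbing the $\mu\mathscr{T}$ term into $\delta\mathscr{T}$ recovers \eqref{3.8}. Throughout, $K, \delta, C, C'$ are read off from the choices of $\epsilon, K, \bar\delta$ and depend only on $n, F, \Omega, A, \bar u$ and $|u|_{1;\Omega}$.

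For part (ii), I would start from the first (sharper) inequality in \eqref{3.8}, now available since F2 holds, and use that $u$ is a supersolution, $\mathcal{F}[u] \le B(\cdot,u,Du)$, to replace $-\mathcal{F}[u]$ by $-B(\cdot,u,Du)$. A standard perturbation of $\bar u$ --- as in \cite{JTY2013}, via the linearized operator and the mean value theorem --- upgrades the subsolution inequality \eqref{subsolution} to the strict inequality \eqref{2.12} for $\bar\delta$ sufficiently small, so that $F(M_u[\bar u] - \bar\delta I) - B(\cdot,u,D\bar u) \ge \delta$ for a new $\delta > 0$. This leaves the difference $B(\cdot,u,D\bar u) - B(\cdot,u,Du) - D_{p_k}B(\cdot,u,Du)D_k(\bar u - u)$, which is nonnegative by the convexity of $B$ in $p$; discarding it yields $\mathcal{L}\eta \ge \delta(\mathscr{T} + 1)$, which is \eqref{2.10}.

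The main obstacle, and the only genuinely delicate point, is the absorption step \eqref{insert 2 between 37 38}. When $A$ is merely regular the Taylor remainder from $A$ has the wrong sign in general, and the construction hinges on dominating its $\mathscr{T}$-component by $\bar\delta\mathscr{T}$ after using \eqref{2.3}; this in turn requires $|D(\bar u - u)|^2$ to be controlled a priori, which is why $|u|_{1;\Omega}$ and $|\bar u|_{1;\Omega}$ enter the constants, and it requires the residual $F^{ij}D_i(\bar u - u)D_j(\bar u - u)$ term to be killable by enlarging $K$, which is harmless since $K$ only enters $\eta$ multiplicatively. Everything else is bookkeeping of constants; the point of using the equivalent form \eqref{regular}/\eqref{2.3} of the regular condition rather than the original \eqref{MTW} is precisely to avoid the pointwise coordinate normalization $D(\bar u - u) = (D_1(\bar u - u),0,\dots,0)$ and the covering argument needed in the earlier papers \cite{JTY2013,JT2014,JTY2014}.
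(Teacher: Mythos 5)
Your proposal is correct and follows essentially the same route as the paper: the chain \eqref{3.7}--\eqref{insert 1 between 37 38}--\eqref{insert 2 between 37 38} with the successive choices of $\epsilon$ and $K$ for case (a), the reduction of case (b) to \eqref{3.6} plus \eqref{homogeneity} to control $F^{ij}w_{ij}=r\cdot F_r$, and for part (ii) the perturbation to the strict subsolution inequality \eqref{2.12} combined with the convexity of $B$ in $p$ to pass from $L$ to $\mathcal{L}$. You have also correctly identified \eqref{insert 2 between 37 38} as the crux and the reason the equivalent form \eqref{regular}/\eqref{2.3} of regularity is used in place of \eqref{MTW}.
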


As the applications of Lemma \ref{Lemma 2.1}, the barrier $\eta$ constructed from $\bar u$ and $u$ will be used for the global second derivative estimates in Section \ref{Section 3}, as well as a global gradient estimate in \cite{JT-new}.

\section{Second derivative estimates}\label{Section 3}
\vskip10pt

In this section, we derive global second derivative  estimates  for admissible solutions of the oblique boundary value problem \eqref{1.1}-\eqref{1.2} with regular $A$.

We shall as usual introduce some notational convention  and make some preliminary calculations. We denote partial derivatives of functions on $\Omega$ by subscripts, that is $u_i = D_i u, u_\tau = D_\tau u = \tau_i u_i,
u_{ij} = D_{ij}u, u_{i \tau} = u_{ij}\tau_j, u_{\tau\tau} = u_{ij}\tau_i\tau_j, w_{ij}=u_{ij}-A_{ij}$ etc.
For a constant unit vector $\tau$, differentiating equation \eqref{1.1} in the $\tau$ direction, we have,
\begin{equation}\label{once diff}
\mathcal{L}u_\tau = F^{ij}\tilde D_{x_\tau} A_{ij} + \tilde D_{x_\tau}  B.
\end{equation}
where $\tilde D_{x_\tau} = \tau\cdot\tilde D_x$  and  $\tilde D_x = D_x +DuD_z$, $\mathcal{L}$ is the linearized operator in \eqref{Linearized op}.
Differentiating again in the $\tau$ direction, we then obtain
\begin{equation}
\begin{array}{ll}\label{twice diff}
\mathcal{L}u_{\tau\tau}= \!&\!\!\displaystyle -F^{ij,kl} D_{\tau}w_{ij}D_{\tau}w_{kl} +F^{ij}[\tilde D_{x_\tau x_\tau}A_{ij}+A_{ij}^{kl}u_{k\tau}u_{l\tau}
+2(\tilde D_{x_\tau}A^k_{ij})u_{k\tau}] \\
  \!&\!\!\displaystyle  + (D_{p_kp_l}B)u_{k\tau} u_{l\tau} +\tilde D_{x_\tau x_\tau} B +  2(\tilde D_{x_\tau} D_{p_k} B)u_{k\tau}.
 \end{array}
\end{equation}

\subsection{Global second derivative estimates}\label{Subsection 3.1}

We begin by formulating the global second derivative estimate for equation \eqref{1.1} which follows from our barrier constructions in Section \ref{Section 2}, similarly to the $k$-Hessian case in \cite{JTY2014}.

\begin{Theorem}\label{Th3.1}
Assume that $F$ is orthogonally invariant satisfying F1, F2 and F3, $A\in C^2(\bar \Omega\times \mathbb{R}\times \mathbb{R}^n)$ is regular in $\bar\Omega$, $B > a_0, \in C^2(\bar \Omega\times \mathbb{R}\times \mathbb{R}^n)$ is convex in $p$, $u\in C^4(\Omega)\cap C^2(\bar \Omega)$ is an admissible solution of equation \eqref{1.1} and $\bar u\in C^2(\Omega)\cap C^1(\bar \Omega)$ is admissible with respect to $u$. Assume further, either {\rm (i)}  F5\textsuperscript{+} holds or
{\rm (ii)} $\bar u$ also satisfies the subsolution condition \eqref{subsolution}.  Then we have the following estimate
\begin{equation}\label{global 2nd bound regular A}
\sup_{\Omega}|D^2u| \le C(1+\sup_{\partial\Omega}|D^2u|),
\end{equation}
where the constant $C$ depends on $A, B, F, \Omega, \bar u$ and $|u|_{1;\Omega}$.
\end{Theorem}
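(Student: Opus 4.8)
The plan is to follow the now-classical reduction of interior-to-boundary estimates for Hessian-type equations via a maximum principle argument applied to a suitable auxiliary function, with the barrier $\eta = e^{K(\bar u - u)}$ from Lemma \ref{Lemma 2.1} absorbing the bad terms that arise because $A$ is only regular rather than strictly regular. First I would set $M = \sup_\Omega |D^2u|$ and, assuming $M$ is attained at an interior point, introduce the auxiliary function
\begin{equation*}
v(x,\tau) = \log w_{\tau\tau} + \phi(|Du|^2) + N\eta,
\end{equation*}
maximized over $x \in \bar\Omega$ and unit vectors $\tau \in \mathbb{S}^{n-1}$, where $\phi$ is a suitably chosen (increasing, convex) auxiliary function of the gradient and $N$ is a large constant to be fixed. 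If the maximum over $\bar\Omega$ occurs on $\partial\Omega$ we are done by \eqref{global 2nd bound regular A}; otherwise we work at an interior maximum point $x_0$ with maximizing direction $\tau$, which after a rotation we may take to be $e_1$, so that $\{w_{ij}(x_0)\}$ may be assumed diagonal by the orthogonal invariance (F3) of $F$.

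Next I would compute $\mathcal{L}v \le 0$ at $x_0$, using \eqref{twice diff} for the $\mathcal{L}\log w_{\tau\tau}$ term, \eqref{once diff} for the gradient term via $\mathcal{L}|Du|^2$, and the barrier inequality \eqref{2.10} (under hypothesis (i), F5\textsuperscript{+}) or \eqref{barrier without - C} (under hypothesis (ii), the subsolution condition together with convexity of $B$ in $p$) for the $\mathcal{L}\eta$ term. The key structural inputs are: the concavity term $-F^{ij,kl}D_\tau w_{ij} D_\tau w_{kl} \ge 0$ from F2, which together with the standard Hessian-equation trick of discarding or exploiting the $\sum F^{ii}(w_{11i})^2/w_{11}^2$ terms controls the third-order derivatives; the regular condition in the form \eqref{2.3}, which lets the term $F^{ij}A^{kl}_{ij}u_{k1}u_{l1}$ be bounded below by $-\bar\lambda(\epsilon \mathscr{T} w_{11}^2 + \tfrac{1}{\epsilon}F^{ij}u_{i1}u_{j1})$ — precisely the shape that the gradient term $\phi' \cdot$ (stuff) and the barrier term $N\delta(\mathscr{T}+1)$ are designed to dominate; and F1 ($F^{ij}>0$) plus F3 to keep $\mathscr{T} = \sum F^{ii}$ as a usable positive quantity. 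After dividing through by $w_{11}$ and grouping, one arrives at an inequality of the form
\begin{equation*}
0 \ge N\delta(\mathscr{T}+1) - C_1 \mathscr{T} - C_2 \mathscr{T} w_{11} \cdot (\text{small}) - C_3 w_{11} - C_4,
\end{equation*}
and one first chooses $\epsilon$ in \eqref{2.3} small, then $N$ large (using $\mathscr{T} \ge c_0 > 0$ which follows from F1 and the admissibility cone structure, or handling the degenerate alternative $\mathscr{T}$ small separately as in \cite{JTY2014}), to force a bound $w_{11}(x_0) \le C$, whence $v(x_0) \le C$ and therefore $M \le C(1 + \sup_{\partial\Omega}|D^2u|)$ after unwinding the definition of $v$.

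The main obstacle — as always in these arguments — is the treatment of the third-order terms $F^{ij}D_1 w_{ij}$ arising from differentiating the equation twice and from $\mathcal{L}\log w_{11}$, and making the bad gradient-dependent terms $F^{ij}A^{kl}_{ij}u_{k1}u_{l1}$ and $2F^{ij}(\tilde D_{x_1}A^k_{ij})u_{k1}$ (which are only controlled modulo the regularity defect $\bar\lambda$) genuinely absorbable. This is exactly where the barrier $\eta$ replaces the role played by $|x|^2$ in the standard Hessian case: the strict positivity $\mathcal{L}\eta \ge \delta(\mathscr{T}+1)$ supplies both a $\mathscr{T}$ term to kill the $\epsilon\mathscr{T} w_{11}^2$ contribution (after dividing by $w_{11}$ and noting $w_{11}$ is what we are bounding) and a constant term to kill lower-order junk, while the choice of $\phi$ (typically $\phi(s) = -\log(1 + \sup|Du|^2 - s)$ or a linear function, depending on whether one needs the $F^{ij}u_iu_j$ term) handles the $\tfrac{1}{\epsilon}F^{ij}u_{i1}u_{j1}$ piece. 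I would follow the $k$-Hessian computation in \cite{JTY2014} essentially verbatim, checking at each step that orthogonal invariance (F3) and F1, F2 are all that is used of $F$ beyond the barrier lemma, and that the constants depend only on the advertised quantities $A, B, F, \Omega, \bar u, |u|_{1;\Omega}$; the distinction between hypotheses (i) and (ii) enters only through which version of the barrier inequality (\eqref{2.10} versus \eqref{barrier without - C}) is invoked, both of which deliver the same $\delta(\mathscr{T}+1)$ lower bound.
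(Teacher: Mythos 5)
Your proposal is correct and follows the same overall architecture as the paper's proof: an interior maximum of $\log w_{\tau\tau}$ plus a gradient function plus the barrier $b\eta$ from Lemma \ref{Lemma 2.1} (the paper's choice \eqref{aux function} uses $\frac{a}{2}(1+\frac12|Du|^2)^2$, but a linear function of $|Du|^2$ serves the same purpose of producing the good term $\frac{a}{2}F^{ii}w_{ii}^2$), the third-order terms handled via Andrews' formula exactly as in the $k$-Hessian computation of \cite{JTY2014}, and the dichotomy (i)/(ii) entering only through which barrier inequality, \eqref{2.10} or \eqref{barrier without - C}, is invoked. The one genuine divergence is your treatment of the term $F^{ij}A^{kl}_{ij}u_{1k}u_{1l}$: you invoke the general regularity inequality \eqref{2.3}, which forces you to take $\epsilon\sim 1/w_{11}(x_0)$ so that the barrier absorbs $\epsilon\bar\lambda\mathscr{T}|Du_1|^2/w_{11}\sim\epsilon\bar\lambda\mathscr{T}w_{11}$, and then to feed the resulting $\epsilon^{-1}F^{ij}u_{1i}u_{1j}/w_{11}\sim F^{11}w_{11}^2$ piece into the gradient term; this can be made to close (enlarge $b$ so that $ab\delta\gtrsim\bar\lambda^2$, which only raises the threshold below which one is done anyway), but it is more delicate bookkeeping. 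The paper instead exploits orthogonal invariance a second time: since $\{w_{ij}\}$ and hence $\{F^{ij}\}$ are diagonal at $x_0$ and $w_{1k}=0$ for $k\ne1$, the raw regularity condition \eqref{MTW} applied with $\xi=e_i$, $\eta=e_1$, $i\ne1$, gives directly $F^{ii}A^{kl}_{ii}u_{1k}u_{1l}\ge F^{11}A^{11}_{11}w_{11}^2-C\mathscr{T}(1+w_{11})\ge-CF^{11}w_{11}^2-C\mathscr{T}(1+w_{11})$ (see \eqref{MTW term} and Remark \ref{Remark-add at the end}), so no $\epsilon\mathscr{T}w_{11}^2$ term ever appears and all constants are fixed independently of $w_{11}$. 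You should adopt the paper's version of this step, or at least record explicitly the $w_{11}$-dependent choice of $\epsilon$ and verify it is compatible with the later requirement $w_{11}\ge 72b^2C/a$ coming from the third-derivative estimate \eqref{Key third derivatives}.
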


\begin{proof}
Let $v$ be an auxiliary
function given by
    \begin{equation}\label{aux function}
        v(x,\xi):=\log (w_{\xi\xi})+ \frac{a}{2}(1+\frac{1}{2}|Du|^2)^2+b\eta,
    \end{equation}
where $w_{\xi\xi}=w_{ij}\xi_i\xi_j=(u_{ij}-A_{ij})\xi_i\xi_j$ with a
vector $\xi\in \mathbb{R}^n$, $\eta=e^{K(\bar u-u)}$ is the
barrier function as in Lemma \ref{Lemma 2.1}, $a$ and $b$ are positive constants to be chosen later, (with $a$ small and $b$ large).

Assume that $v$ takes its maximum at an interior point $x_0 \in
\Omega$ and a unit vector $\xi_0$. Without loss of generality, we can
choose the coordinate system $e_1,\cdots,e_n$ at $x_0$ such that $e_1(x_0)=\xi_0$, $\{w_{ij}(x_0)\}$ is diagonal, and
$w_{11}(x_0)= \max\limits_i {w_{ii}(x_0)}$. Then we will assume $w_{11}(x_0)\ge 1$ as large as want, otherwise we are done. Since the operator $\mathcal{F}$ is orthogonally invariant, $\{F^{ij}\}$ is also diagonal at $x_0$.

Since the function $\vartheta(x):=v(x,e_1)$ attains its maximum at the point $x_0$, we have $D\vartheta(x_0)=0$ and $\mathcal{L}\vartheta(x_0)\le 0$. By direct calculations, we have at the maximum point $x_0$,
    \begin{equation}\label{differentiate once '}
       0 = D_i\vartheta=\frac{D_iw_{11}}{w_{11}}+a(1+\frac{1}{2}|Du|^2)u_ku_{ki}+b\eta_i, \quad {\rm for}\ i=1,\cdots, n,
    \end{equation}
and
    \begin{equation}\label{Lv leq 0}
        \begin{array}{rl}
            0 \ge \mathcal {L}\vartheta
              =   \!\!\! & \displaystyle \frac{1}{w_{11}}\mathcal {L}(w_{11}) -\frac{1}{w^2_{11}}F^{ii}(D_iw_{11})^2 \\
                  \!\!\! & \displaystyle +a(1+\frac{1}{2}|Du|^2)u_i\mathcal {L}u_i+b\mathcal {L}\eta \\
                  \!\!\! & \displaystyle +a(1+\frac{1}{2}|Du|^2)F^{ii}u_{ki}^2+aF^{ii}(u_ku_{ki})^2.
        \end{array}
    \end{equation}
We shall successively estimate the terms on the right hand side of (\ref{Lv leq 0}). Note that all the calculations will be made at the point $x_0$. By replacing $\tau$ with $e_1$ in \eqref{twice diff}, we have
    \begin{equation}\label{L u11}
       \begin{array}{rl}
          \mathcal{L}u_{11} \ge \!\!&\!\! - F^{ij,kl}D_1w_{ij}D_1w_{kl} - C(1+w_{11})(1+\mathscr{T}) \\
                                \!\!&\!\! + F^{ii}A_{ii}^{kl}u_{1k}u_{1l} + (D_{p_kp_l}B)u_{1k}u_{1l}.
       \end{array}
    \end{equation}
Here the constant $C$ depends on $A, B, \Omega$ and $|u|_{1;\Omega}$. Unless otherwise specified, we shall use $C$ to denote a positive constant
with such dependence in this proof. Using the regularity condition \eqref{MTW}, we can estimate
    \begin{equation}\label{MTW term}
        \begin{array}{rl}
            F^{ii}A_{ii}^{kl}u_{1k}u_{1l} \!\!\! & =
            F^{ii}A_{ii}^{kl}(w_{1k}+A_{1k})(w_{1l}+A_{1l})\\
            \!\!\! & \geq F^{11}A_{11}^{11}w^2_{11}-C\mathscr{T}(1+w_{11})\\
            \!\!\! & \geq -CF^{11}w^2_{11}-C\mathscr{T}(1+w_{11}).
        \end{array}
    \end{equation}
By a direct calculation, we have
    \begin{equation}\label{L A 11}
        \begin{array}{rl}
            -\mathcal {L}A_{11}  \ge \!\!&\!\!
            -F^{ii}A_{11}^{ii}w_{ii}^2 - F^{ii}A_{11}^k u_{kii} - C(1+w_{11})(1+\mathscr{T})\\
            \leq \!\!&\!\!
            -CF^{ii}w_{ii}^2 - C(1+w_{11})(1+\mathscr{T}),
        \end{array}
    \end{equation}
where the third derivative term in the first line is treated by using the once differentiated equation \eqref{once diff}. Combining \eqref{L u11}, \eqref{MTW term}, \eqref{L A 11}, and using the convexity of $B$ in $p$, we obtain
    \begin{equation}\label{first and second terms}
        \begin{array}{rl}
               \!\!&\!\!\displaystyle \frac{1}{w_{11}}\mathcal {L}(w_{11}) -\frac{1}{w^2_{11}}F^{ii}(D_iw_{11})^2 \\
           \ge \!\!&\!\!\displaystyle -\frac{1}{w_{11}}F^{ij,kl}D_1w_{ij}D_1w_{kl} -\frac{1}{w^2_{11}}F^{ii}(D_iw_{11})^2 \\
               \!\!&\!\!\displaystyle -\frac{C}{w_{11}}F^{ii}w_{ii}^2 - C(1+\mathscr{T}).
        \end{array}
    \end{equation}
We need to estimate the first two key terms on the right hand side of \eqref{first and second terms} which involve third derivatives. For this, by setting
    \begin{equation*}
    I=\{i\ \!|\! \ w_{ii}\leq -\frac{1}{3} w_{11}\},\ \ J=\{j\ \!|\! \ w_{jj}>-\frac{1}{3} w_{11}, j>1\},
    \end{equation*}
we have $1 \notin I$, $1 \notin J$, $I\cap J=\emptyset$, $\{1\}\cup
I \cup J=\{1,\cdots,n\}$, and $J^c={1}\cup I$, where $J^c$ is the
complementary set of $J$. By Andrews formula in Lemma 2.2 in \cite{JTY2014}, (see also \cite{B.Andrews, G}), and F2, we have
    \begin{equation}\label{Key third derivatives}
        \begin{array}{ll}
              \!\!&\!\! \displaystyle -\frac{1}{w_{11}}F^{ij,kl}D_1w_{ij}D_1w_{kl} -\frac{1}{w^2_{11}}F^{ii}(D_iw_{11})^2 \\
         \ge \!\!&\!\! \displaystyle \frac{2}{w_{11}}\sum_{i\neq j}\frac{F^{ii}-F^{jj}}{w_{jj}-w_{ii}}(D_1w_{i1})^2 -\frac{1}{w^2_{11}}F^{ii}(D_iw_{11})^2\\
         \ge \!\!&\!\! \displaystyle \frac{1}{w^2_{11}}\sum_{i\in J}F^{ii}(D_iw_{11})^2-\frac{C}{w^2_{11}}\sum_{i\in J}F^{ii}-\frac{F^{11}}{w^2_{11}}\sum_{i\in J}(D_iw_{11})^2-\frac{1}{w^2_{11}}F^{ii}(D_iw_{11})^2 \\
         \ge \!\!&\!\! \displaystyle -\sum_{i\in I}F^{ii}(\frac{D_iw_{11}}{w_{11}})^2-F^{11}\sum_{i\notin I}(\frac{D_iw_{11}}{w_{11}})^2 -C\mathscr{T},
        \end{array}
    \end{equation}
where the second inequality is obtained by fixing $j=1$, commuting $D_1w_{i1}$ and $D_iw_{11}$, and using Cauchy's inequality, (see (3.16) in \cite{JTY2014}).
Then using \eqref{differentiate once '} in \eqref{Key third derivatives}, we have
    \begin{equation}\label{Key third derivatives'}
        \begin{array}{ll}
             \!\!&\!\! \displaystyle -\frac{1}{w_{11}}F^{ij,kl}D_1w_{ij}D_1w_{kl} -\frac{1}{w^2_{11}}F^{ii}(D_iw_{11})^2 \\
         \ge \!\!&\!\! \displaystyle - C \sum_{i\in I}F^{ii}(a^2w_{ii}^2 +a^2+ b^2) - CF^{11}\sum_{i\notin I}(a^2w_{ii}^2+a^2+b^2)-C\mathscr{T} \\
         \ge \!\!&\!\! \displaystyle -a^2CF^{ii}w_{ii}^2 - b^2C\sum_{i\in I}F^{ii} - a^2C F^{11}w_{11}^2 - C(a^2+b^2)F^{11} -C(1+a^2)\mathscr{T},
        \end{array}
    \end{equation}
where the properties of $I$ and $J$ are used in the last inequality. Now returning to \eqref{Lv leq 0} and using \eqref{once diff}, we have
    \begin{equation}\label{second line estimate 1}
        a(1+\frac{1}{2}|Du|^2)u_i\mathcal {L}u \ge - aC (1+\mathscr{T}).
    \end{equation}
By the barrier construction in Lemma \ref{Lemma 2.1}, if either {\rm (i)} F5\textsuperscript{+} holds or
(ii) $\bar u$ also satisfies the subsolution condition \eqref{subsolution}, we have
    \begin{equation}\label{second line estimate 2}
        b\mathcal{L}\eta \ge b\delta (\mathscr{T} + 1),
    \end{equation}
where the constant $\delta$ depends on  $n, F, \Omega, A, B, \bar u$ and $|u|_{1;\Omega}$.
The last two terms in \eqref{Lv leq 0} are readily estimated as follows.
    \begin{equation}\label{third line estimate}
       a(1+\frac{1}{2}|Du|^2)F^{ii}u_{ki}^2+aF^{ii}(u_ku_{ki})^2\ge \frac{a}{2}F^{ii}w_{ii}^2 - aC\mathscr{T}.
    \end{equation}
Consequently, by \eqref{Lv leq 0}, \eqref{first and second terms}, \eqref{Key third derivatives'}, \eqref{second line estimate 1}, \eqref{second line estimate 2} and \eqref{third line estimate}, we have
    \begin{equation}\label{Lv leq 0 ''}
            0\ge \mathcal {L}\vartheta \ge \frac{a}{8}F^{ii}w_{ii}^2 + \frac{b\delta}{2}(1+\mathscr{T}) -b^2C \sum_{i\in I}F^{ii} - a^2CF^{11}w_{11}^2 - C(a^2+b^2)F^{11}.
    \end{equation}
Note that in order to obtain \eqref{Lv leq 0}, we have fixed $a=1/(16C)$, $b=2C(1+a+a^2)/\delta$, and assumed $w_{11}\ge 72b^2C/a$. From the choice of $a$, $b$, we have
    \begin{equation}\label{final control}
        \begin{array}{rl}
        \displaystyle
            \frac{a}{8}F^{ii}w_{ii}^2 \ge \!\!&\!\!\displaystyle \frac{a}{8}F^{11}w_{11}^2 + \sum_{i\in I} F^{ii}w_{ii}^2 \\
                                      \ge \!\!&\!\!\displaystyle \frac{a}{8}F^{11}w_{11}^2 + \frac{a}{72}w_{11}\sum_{i\in I}F^{ii} \\
                                      \ge \!\!&\!\!\displaystyle \frac{a}{16}F^{11}w_{11}^2+a^2 C F^{11}w_{11}^2 + b^2C\sum_{i\in I}F^{ii}.
        \end{array}
    \end{equation}
We then obtain from \eqref{Lv leq 0 ''} and \eqref{final control},
    \begin{equation*}
       w_{11} \le \sqrt{\frac{16C(a^2+b^2)}{a}},
    \end{equation*}
which implies the conclusion \eqref{global 2nd bound regular A} and completes the proof of Theorem \ref{Th3.1}.
\end{proof}

\begin{remark}\label{Remark3.1}
If the matrix $A$ has the form
\begin{equation}\label{special A}
A(x,z,p) = A_0(x,z) + A_1(p)
\end{equation}
satisfying the strengthened regular condition
\begin{equation}\label{strengthened regularity}
A_{ij}^{kl}\xi_i\xi_j\eta_k\eta_l \geq - \bar\lambda (\xi\cdot\eta)^2,
\end{equation}
for all $\xi, \eta \in \mathbb{R}^n$, where  $\bar\lambda$ is a non-negative function in
$C^0(\Omega\times \mathbb{R}\times \mathbb{R}^n)$, and $B$ has the form
\begin{equation}\label{special B}
B(x,z,p) = B_0(x,z) + B_1(p),
\end{equation}
then the orthogonal invariance of $\mathcal{F}$ is not needed in Theorem \ref{Th3.1}. To show this we consider the auxiliary function
\begin{equation}\label{aux in Remark 3.1}
v(x,\xi):=u_{\xi\xi} + \frac{a}{2}|u_\xi|^2+b\eta
\end{equation}
in place of \eqref{aux function}. By calculation, we have
    \begin{equation}\label{inequal in Rm2.1}
        \begin{array}{ll}
           \mathcal{L}v \!\!&\!\!\displaystyle \ge - C(1+\mathscr{T}) + F^{ij}A_{ij}^{kl}u_{\xi k}u_{\xi l} + aF^{ij}u_{\xi i}u_{\xi j} + \frac{b\delta}{2}(1+\mathscr{T}) \\
                        \!\!&\!\!\displaystyle \ge (a-\bar \lambda)F^{ij}u_{\xi i}u_{\xi j} + \frac{b\delta}{4}(1+\mathscr{T})>0,
        \end{array}
    \end{equation}
by choosing $b\ge \frac{4C}{\delta}$ and $a >\sup \bar\lambda$, which implies
    \begin{equation}\label{improved 2nd global estimate}
       \sup\limits_{\Omega}|D^2u|\le \sup_{\partial\Omega}|D^2u| + C,
    \end{equation}
where $C$ depends on $A, B, F, \Omega, \bar u$ and $|u|_{1;\Omega}$.
Therefore, we obtain the global second derivative estimate without the orthogonal invariance of $\mathcal{F}$ for $A$ satisfying \eqref{special A} and \eqref{strengthened regularity} and $B$ satisfying \eqref{special B}.

We also remark that if $A=A_0(x,z)$ and $B=B_0(x,z)$ are independent of $p$, instead of $\eta=e^{K(\bar u-u)}$, we can
take $\eta(x)=|x|^2$ in \eqref{aux in Remark 3.1} since then $\mathcal{L}\eta =2\mathscr{T}$.
\end{remark}

\begin{remark}\label{Remark3.2}
For the $k$-Hessian operators $\mathcal F_k$ we do not need to assume the convexity of $B$ with respect to $p$ in the special cases $k=1,2$ or $n$ in Theorem \ref{Th3.1}. The Monge-Amp\`ere case, $k=n$, is already covered in \cite{JTY2013, JTX2015}, with simpler proof, while in the case, $k=2$, the inequality, $|r| \le \mathscr T(r)$ in $\Gamma_2$ shows that convexity in $p$ is not needed to infer \eqref{first and second terms} from \eqref{L u11}, \eqref{MTW term} and \eqref{L A 11}.
\end{remark}

\begin{remark}\label{Remark-add at the end}
When $\mathcal{F}$ is orthogonally invariant, if we diagonalise $\{w_{ij}\}$ at a point $x_0$, then $\{F^{ij}\}$ is also diagonal at $x_0$. From the regularity of $A$, we have at $x_0$,
\begin{equation}\label{inequal in Rm at the end}
F^{ij}A_{ij}^{kl}w_{1k}w_{1l}=F^{ii}A_{ii}^{kl}w_{1k}w_{1l} \ge F^{11}A_{11}^{11}w_{11}^2 \ge -CF^{11}w_{11}^2, 
\end{equation}
which is used in the key estimate \eqref{MTW term}. Comparing with \eqref{2.3}, the estimate \eqref{inequal in Rm at the end} is better for the proof of Theorem \ref{Th3.1}, since the first term on the right hand side of \eqref{2.3} does not appear when $F$ is orthogonally invariant.
\end{remark}

\subsection{Boundary estimates for second derivatives}\label{Subsection 3.2}

Theorem \ref{Th3.1} reduces global bounds for second derivatives for the boundary value problem to boundary estimates, which we now take up.
For the bounded domain $\Omega\subset \mathbb{R}^n$ with $\partial\Omega\in C^2$, we define the $A$-curvature matrix $K_A[\partial\Omega]$ and the uniform $(\Gamma,A, G)$-convexity as in \cite{JT-oblique-I}.

To compensate for the matrix  $A$ not being strictly regular as in \cite{JT-oblique-I}, we will impose either a stronger concavity condition on the function $G$ in \eqref{1.2} with respect to the gradient variables or stronger monotonicity conditions on $G$ or $A$ with respect to the solution variable. For this purpose we call the function $G\in C^2(\partial\Omega \times \mathbb{R}\times \mathbb{R}^n)$ locally uniformly concave, (concave), in $p$
if $D^2_p G < 0, ( \le 0),$ in $\partial\Omega\times \mathbb{R}\times \mathbb{R}^n$.  If $u\in C^1(\bar \Omega)$ and
\begin{equation}\label{uniform concave of G}
G_{p_kp_l}(\cdot, u, Du)\xi_k\xi_l \le - \gamma_0 |\xi|^2, \quad {\rm on}  \ \partial\Omega,
\end{equation}
for all $\xi\in \mathbb{R}^n$ and some constant $\gamma_0>0,  (= 0)$, then we say the function $G$ is  uniformly concave, (concave) in $p$, with respect to $u$.
The boundary condition \eqref{1.2} is said to be quasilinear if $G$ in \eqref{1.2} has the form
\begin{equation}\label{quasilinear}
G(x,z,p) = \beta(x,z)\cdot p - \varphi(x,z),
\end{equation}
where $\beta$ and $\varphi$ on $\partial\Omega\times \mathbb{R}$. When $\beta$ in \eqref{quasilinear} is independent of $z$, we say the boundary condition \eqref{1.2} is semilinear.

First we consider the estimation of the non-tangential second derivatives.
By tangential differentiation of the boundary condition \eqref{1.2}, the equality (2.21) in \cite{JT-oblique-I} holds and hence we get the mixed tangential oblique derivative estimate
\begin{equation}\label{mixed nonlinear G}
|u_{\tau\beta}|\le C, \quad {\rm on} \ \partial\Omega,
\end{equation}
for any unit tangential vector field $\tau$, where $\beta=D_pG(\cdot, u, Du)$ and the constant $C$ depends on $G$, $\Omega$ and $|u|_{1;\Omega}$.
When conditions F1-F5 and F6 hold, $\mathcal{G}$ is oblique, satisfying \eqref{obliqueness}, and either F5\textsuperscript{+} holds or $B$ is independent of $p$, then $F, A, B$ and $G$ satisfy the hypotheses  of Lemma 2.2 in \cite{JT-oblique-I} and we obtain  upper bounds for the pure second order oblique derivatives from that lemma, namely
\begin{equation}\label{upper bound for double oblique}
\sup\limits_{\partial\Omega} u_{\beta\beta} \le \epsilon M_2 + C_\epsilon,
\end{equation}
for any $\epsilon>0$, where $M_2=\sup\limits_{\Omega}|D^2u|$, and $C_\epsilon$ is a constant depending on $\epsilon, F, A, B, G, \Omega, \beta_0$ and $|u|_{1;\Omega}$. Moreover in the case when $B_p = 0$, we can take $\epsilon = 0$, in \eqref{upper bound for double oblique}.

We next consider the pure tangential derivative estimates on the boundary when $G$ in \eqref{1.2} is uniformly concave in $p$, with respect to $u$. In this paper however we will only consider the cases when $A$ is affine in $p$ on $\partial\Omega$, that is $D_{pp}A=0$ on $\partial\Omega$, or $\mathcal G \ge 0$, near $\partial \Omega$, although a more general situation, depending on gradient bounds, is embraced by condition (1.19) in Remark 1.2 in \cite{JT-oblique-I}. If $D_{pp}A=0$ on $\partial\Omega\times\mathbb{R}\times \mathbb{R}^n$, it follows that the $A$-curvature matrix $K_A[\partial\Omega]$ is independent of $p$ so that the notion of $(\Gamma, A, G)$-convexity is independent of $G$, and can be written simply as $(\Gamma, A)$-convexity. Also note that we can assume without any loss of generality that $G$ has been extended smoothly to $\bar\Omega\times \mathbb{R}\times \mathbb{R}^n$, in which case we may more generally call $G$ uniformly concave in $p$, on a subset $\mathcal N\in \bar\Omega$, with respect to $u$, if \eqref{uniform concave of G} holds on $\mathcal N$. If $G$ is locally uniformly concave in $p$, then $G$ is uniformly concave in $p$, with respect to $u$ in some neighbourhood of $\partial\Omega$.
\begin{Lemma}\label{Lemma 3.1}
Let $u\in C^2(\bar \Omega)\cap C^4(\Omega)$ be an admissible solution of the boundary value problem \eqref{1.1}-\eqref{1.2} in a $C^{2,1}$ domain $\Omega \subset \mathbb{R}^n$, which is uniformly $(\Gamma, A, G)$-convex with respect to $u$. Assume that $F$ satisfies F1-F5, $A\in C^2(\bar \Omega\times \mathbb{R}\times\mathbb{R}^n)$ is regular, $B>a_0, \in C^2(\bar\Omega\times \mathbb{R}\times \mathbb{R}^n)$, $G\in C^2(\bar\Omega\times \mathbb{R}\times \mathbb{R}^n)$ is oblique and uniformly concave in $p$, with respect to $u$ on $\partial\Omega$. Assume also either F5\textsuperscript{+} holds or $B$ is independent of $p$, either $D_{pp}A = 0$ in $\partial\Omega\times\mathbb{R}\times \mathbb{R}^n$ or $\mathcal G [u] \ge 0$ in some neighbourhood $\mathcal N$ of $\partial \Omega$, and either $\mathcal F$ is orthogonally invariant or F6 holds or $G$ is uniformly concave in $p$, with respect to $u$ in $\mathcal N$. Then for any tangential vector field $\tau$, $|\tau|\le 1$, we have the estimate
\begin{equation}\label{pure tangential est}
M_2^+(\tau) \le \epsilon M_2 + C_\epsilon
\end{equation}
for any $\epsilon >0$, where $M_2^+(\tau)=\sup\limits_{\partial\Omega}u_{\tau\tau}$ and $C_\epsilon$ is a constant depending on $\epsilon, F, A, B, G, \beta_0, \Omega$ and $|u|_{1;\Omega}$.
\end{Lemma}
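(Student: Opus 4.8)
The plan is to follow the standard double-tangential barrier argument for oblique problems, as developed in \cite{JT-oblique-I}, but to exploit the uniform concavity of $G$ in $p$ together with the regular barrier $\eta$ from Lemma \ref{Lemma 2.1} to absorb the terms that, in the strictly regular case, were controlled by the strict regularity of $A$. Fix a boundary point $x_0$ and choose a principal coordinate frame at $x_0$ so that the inner normal is $e_n$ and $e_1,\dots,e_{n-1}$ are tangential. The quantity to be bounded is $u_{\tau\tau}(x_0)$ for $\tau$ tangential; after subtracting the known mixed estimate \eqref{mixed nonlinear G} and the double-oblique bound \eqref{upper bound for double oblique}, one reduces to controlling a maximum over $\partial\Omega$ and over unit tangential $\tau$ of an expression of the form $w_{\tau\tau} = u_{\tau\tau} - A_{\tau\tau}(\cdot,u,Du)$. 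First I would introduce an auxiliary function on a boundary neighbourhood of the shape
\[
  v = w_{\tau\tau} + (\text{lower-order tangential/normal correction terms}) + a\,|Du|^2\text{-type term} + b\,\eta - \Lambda\,\mathcal{G}[u],
\]
where $\eta = e^{K(\bar u - u)}$ is the global barrier, $b$ is large, $a$ is small, and the last term (present only in the alternative hypothesis $\mathcal{G}[u]\ge 0$ near $\partial\Omega$, which forces $v$ to be maximised on $\partial\Omega$ after the correction) is chosen with $\Lambda$ large. One then supposes $v$ attains an interior maximum of the neighbourhood at some $y_0$ with a tangential-at-$x_0$ direction replaced by a genuine vector field $\tau$ defined near $\partial\Omega$ (extending the principal tangent frame), and derives the usual $\mathcal{L}v \le 0$ inequality.

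The key computation is to expand $\mathcal{L}w_{\tau\tau}$ using the twice-differentiated equation \eqref{twice diff} (with the constant direction $\tau$ replaced by the vector field and the extra terms from differentiating $\tau$ itself handled as lower order), producing the good concavity term $-F^{ij,kl}D_\tau w_{ij}D_\tau w_{kl}\ge 0$ (by F2) and the bad term $F^{ij}A_{ij}^{kl}u_{k\tau}u_{l\tau}$. In the strictly regular setting of Part I this bad term had a definite sign; here it only satisfies \eqref{2.3}, i.e.
\[
  F^{ij}A_{ij}^{kl}u_{k\tau}u_{l\tau} \ge -\bar\lambda\bigl(\epsilon_1 \mathscr{T}\,|Du_\tau|^2 + \tfrac{1}{\epsilon_1}F^{ij}u_{i\tau}u_{j\tau}\bigr),
\]
so it is bounded below by $-C\bigl(\mathscr{T}(1+M_2) + F^{ij}u_{i\tau}u_{j\tau}\bigr)$, roughly. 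The $\mathscr{T}$-contribution, of order $\mathscr{T}\cdot M_2$, must be defeated by the barrier: $b\,\mathcal{L}\eta \ge b\delta(\mathscr{T}+1)$ gains $\mathscr{T}$ but not $\mathscr{T}\cdot M_2$, so the crucial point is that at the boundary the concavity of $G$ kicks in. Differentiating the boundary condition $G(\cdot,u,Du)=0$ twice tangentially along $\partial\Omega$ gives, via \eqref{uniform concave of G}, a term $G_{p_kp_l}u_{k\tau}u_{l\tau}\le -\gamma_0|Du_\tau|^2$ together with a contribution from the second fundamental form and the $A$-curvature matrix that is controlled by the uniform $(\Gamma,A,G)$-convexity. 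This produces, on $\partial\Omega$, a relation of the form $G_\beta\cdot(\text{normal second-derivative term}) \ge \gamma_0|Du_\tau|^2 - C$; combined with obliqueness $G_\beta\cdot\nu \ge \beta_0>0$ it forces $w_{\tau\tau}$ on the boundary to be controlled by $|Du_\tau|^2$, and the $F^{ij}u_{i\tau}u_{j\tau}$ terms are then absorbed exactly as in \eqref{inequal in Rm2.1}: choosing $a$ larger than a multiple of $\sup\bar\lambda/\gamma_0$ makes $(a\gamma_0 - C\bar\lambda)F^{ij}u_{i\tau}u_{j\tau}\ge 0$. In the cases $D_{pp}A=0$ on $\partial\Omega$ the term $F^{ij}A_{ij}^{kl}u_{k\tau}u_{l\tau}$ is lower order at the boundary and the alternative $\mathcal{G}[u]\ge 0$ near $\partial\Omega$ handles the interior of the neighbourhood via the $-\Lambda\mathcal{G}[u]$ term, whose linearisation $-\Lambda\mathcal{L}\mathcal{G}[u]$ is estimated using the already-established Lemma 2.2 of Part I and the barrier.

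The remaining bookkeeping — estimating the third-derivative cross terms $F^{ii}(D_iw_{\tau\tau})^2$ using the first-order condition $Dv(y_0)=0$, disposing of the lower-order terms from the vector-field derivatives $D\tau$, and verifying the three mutually exclusive sub-hypotheses (orthogonal invariance, or F6, or $G$ uniformly concave in $\mathcal N$) each suffice to make the $F^{ij}A_{ij}^{kl}$ term manageable in the interior of $\mathcal N$ — proceeds as in \cite{JT-oblique-I} and in the proof of Theorem \ref{Th3.1} above. The main obstacle I anticipate is the simultaneous juggling of two small/large parameter families, $(\epsilon_1, a)$ controlling the regularity defect via $G$'s concavity and $(b,\Lambda)$ controlling the barrier and the sign of the auxiliary function, while keeping the final bound of the form $\epsilon M_2 + C_\epsilon$ rather than an absolute bound: because \eqref{upper bound for double oblique} only gives $u_{\beta\beta}\le \epsilon M_2 + C_\epsilon$ unless $B_p=0$, the argument must carry this $\epsilon M_2$ loss through every substitution, and one must be careful that the constant $C_\epsilon$ depends only on the claimed data $\epsilon, F, A, B, G, \beta_0, \Omega, |u|_{1;\Omega}$ and not on $M_2$ itself.
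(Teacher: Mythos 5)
Your proposal has the right skeleton (maximise a tangential second--derivative quantity over $\partial\Omega$, rule out an interior maximum of an auxiliary function in a boundary strip, then exploit the concavity of $G$ at the boundary maximum), but two essential mechanisms are missing or misassigned. First, the barrier. You propose $b\eta$ with $\eta=e^{K(\bar u-u)}$ and $b$ a fixed large constant, and you yourself observe that $b\mathcal L\eta\ge b\delta(\mathscr T+1)$ ``gains $\mathscr T$ but not $\mathscr T\cdot M_2$''; but the regularity defect \eqref{2.3} produces interior terms of order $\epsilon M_2^2\,\mathscr T$ (see \eqref{3.37}--\eqref{3.38}), and no fixed multiple of $\eta$ can defeat them. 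The proof instead uses the \emph{local} barrier $\phi=d-d^2/2\rho$ from Lemma 2.2 of Part I, satisfying $L\phi\le-\sigma\mathscr T$ in $\Omega_\rho$, with the $M_2$--dependent coefficient $\kappa M_2^2$ in the auxiliary function $v=v_1-f-\alpha M_2G-\kappa M_2^2\phi$. This is precisely where the hypotheses ``$D_{pp}A=0$ on $\partial\Omega\times\mathbb R\times\mathbb R^n$ or $\mathcal G[u]\ge0$ near $\partial\Omega$'' enter: they guarantee that the uniform $(\Gamma,A,G)$--convexity yields such a $\phi$ throughout the strip, not merely on $\partial\Omega$. Your reading of $D_{pp}A=0$ as making $F^{ij}A_{ij}^{kl}u_{k\tau}u_{l\tau}$ ``lower order at the boundary'' misses this: the interior maximum point lies in $\Omega_\rho$, not on $\partial\Omega$, so that term is not lower order there. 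Note also that $\eta$ presupposes an admissible $\bar u$, which is not among the hypotheses of Lemma \ref{Lemma 3.1}.

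Second, the way the uniform concavity of $G$ closes the estimate. Your claim that the twice tangentially differentiated boundary condition ``forces $w_{\tau\tau}$ on the boundary to be controlled by $|Du_\tau|^2$'' is not the operative logic (and would be circular, since $|Du_\tau|^2\ge u_{\tau\tau}^2$). The actual mechanism is a two--sided bound on the \emph{third} derivative $u_{11\beta}(0)$: once the auxiliary function is shown to attain its maximum at $0\in\partial\Omega$, the Hopf--type inequality $D_\beta v(0)\le0$ gives the upper bound $u_{11\beta}(0)\le\epsilon CM_2^2+C_\epsilon$ (the $M_2^2$ here coming from the $\kappa M_2^2\phi$ term), while \eqref{twice diff bdy} together with \eqref{uniform concave of G} gives the lower bound $u_{11\beta}(0)\ge\gamma_0u_{11}^2(0)-C(1+M_2)$, which is \emph{quadratic} in $u_{11}(0)$. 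Comparing and taking square roots yields $u_{11}(0)\le\sqrt{2\epsilon C/\gamma_0}\,M_2+C_\epsilon$, which is exactly how the factor $\epsilon M_2$ in \eqref{pure tangential est} arises. This quadratic--gain--versus--square--root step is the heart of the lemma and is absent from your outline; without it the concavity of $G$ buys you nothing quantitative. The remaining structure (the $-\alpha M_2G$ term absorbing $\mathcal E_2'$ via $\mathcal LG\ge\gamma_0\mathcal E_2'-\dots$ when $G$ is uniformly concave in $\mathcal N$, $\alpha=0$ with F6, and the rotated vector field $\xi$ with $c_1=C/\epsilon$ in the orthogonally invariant case) you defer to bookkeeping, but these are the three distinct ways the interior $\mathcal E_2'$ terms are controlled, and they cannot all be handled by the single absorption $(a\gamma_0-C\bar\lambda)F^{ij}u_{i\tau}u_{j\tau}\ge0$ you suggest.
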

\begin{proof}
In order to obtain the upper bound \eqref{pure tangential est} for the pure tangential second derivatives when $A$ is regular, we need to use the full strength of the uniform concavity of $G$ in $p$ together with the barrier constructed in Lemma 2.2 in \cite{JT-oblique-I} from the uniform $(\Gamma, A, G)$-convexity of $\Omega$. The latter condition combined with either of the conditions, $\mathcal G [u] \ge 0$ in $\mathcal N$ or $D_{pp}A = 0$ in $\partial\Omega\times\mathbb{R}\times \mathbb{R}^n$ implies for a smaller neighbourhood $\mathcal N$ there exists a barrier
$\phi \in C^2( \mathcal N)$ satisfying
\begin{equation}\label{barrier}
L\phi \le - \sigma \mathscr{T},
\end{equation}
in $\mathcal N$, $\phi=0$ on $\partial\Omega$, $\phi > 0$ in $\mathcal N\cap\Omega$, where $\sigma$ is a small positive constant. Moreover we may fix $\mathcal N = \Omega_\rho =\{x\in \Omega|\ d(x)<\rho\}$ for a further small positive constant $\rho$ and $\phi = d-d^2/2\rho$, where $d$ given by $d(x)={\rm dist}(x,\partial\Omega)$ denotes the distance function in $\Omega$. From \eqref{barrier} we then have for the full linearized operator $\mathcal L$,
\begin{equation}\label{full barrier}
\mathcal L\phi \le - \frac{1}{2}\sigma \mathscr{T},
\end{equation}
in $\mathcal{N}$, provided $|D^2u| \ge C_\sigma$, for a further positive constant $C_\sigma$, depending on $\sigma$ and $B$, if F5\textsuperscript{+} holds while $\mathcal L = L$ satisfies \eqref{barrier} if $B_p = 0$. Accordingly the hypotheses of Lemma 3.1 are reduced to the last three alternatives which we now consider.
Following the proof of Lemma 2.3 in \cite{JT-oblique-I}, we suppose that the function
\begin{equation}\label{v tau}
v_\tau = u_{\tau\tau} + \frac{c_1}{2} |u_\tau|^2
\end{equation}
takes a maximum over $\partial\Omega$ and tangential vectors $\tau$, such that $|\tau|\le 1$, at a point $x_0 \in \partial\Omega$ and vector $\tau=\tau_0$, where $c_1$ is now a fixed constant to be determined. Without loss of generality, we may assume $x_0=0$ and $\tau_0=e_1$. For $\beta=D_pG(\cdot, u, Du)$, setting
\begin{equation}\label{b}
b=\frac{\nu_1}{\beta\cdot \nu}, \quad\quad \tau=e_1 - b\beta,
\end{equation}
we then have, at any point in $\partial\Omega$,
\begin{equation}\label{decomposition}
v_1 = v_\tau + b(2u_{\beta\tau}+c_1u_\beta u_\tau)+ b^2 (u_{\beta\beta} + \frac{c_1}{2}u_\beta^2),
\end{equation}
with $v_1(0)=v_\tau (0)$, $b(0)=0$ and $\tau(0)=e_1$. For the cases when F6 holds or $G$ is uniformly concave in $p$ with respect to $u$ near $\partial\Omega$, we can take $c_1=0$, similarly to the F6 case in Lemma 2.3 in \cite{JT-oblique-I}. By \eqref{obliqueness}, \eqref{mixed nonlinear G}, we then have from \eqref{decomposition}, following (2.53) to (2.54) in \cite{JT-oblique-I},
\begin{equation}\label{boundary inequality}
\begin{array}{ll}
v_1 = u_{11} \!\!&\!\! \le u_{\tau\tau} + g(0)\nu_1 + C(1+M_2) |x|^2 \\
\!\!&\!\! \le |\tau|^2 u_{11}(0) + g(0)\nu_1 + C(1+M_2)|x|^2 \\
\!\!&\!\! \le (1-2b\beta_1) u_{11}(0) + g(0)\nu_1 + C(1+M_2) |x|^2, \quad {\rm on} \ \partial\Omega,
\end{array}
\end{equation}
where $g = \frac{2u_{\beta\tau}}{\beta\cdot\nu}$, $|g(0)|\le C$, and $C$ is a constant depending on $\Omega, G, \beta_0$ and $|u|_{1;\Omega}$. For $\epsilon > 0$, we now define truncation functions $\zeta_\epsilon, \tilde\zeta_\epsilon \in
C^2(\mathbb{R})$ such that $\zeta_\epsilon(t) = \epsilon t /|t|$ for $|t| \ge 3\epsilon/2$, $\zeta_\epsilon(t) = t$ for $|t|\le \epsilon/2$, $\tilde\zeta_\epsilon(t) = t$ for $t \ge 3\epsilon/2$, $\tilde\zeta_\epsilon(t) = \epsilon$ for $t \le \epsilon/2$,
$0\le \zeta^\prime_\epsilon$, $\tilde\zeta^\prime_\epsilon\le 1$ and $|\zeta^{\prime\prime}_\epsilon|\le2/\epsilon$, $0\le \tilde\zeta^{\prime\prime}_\epsilon\le2/\epsilon$. With $\nu$ also extended smoothly to all of $\bar \Omega$ so that $\nu$ is constant along normals to $\partial\Omega$ in $\mathcal N$, we can then define for, $0<\epsilon_1 < 1$,
\begin{equation}\label{chi}
\chi(\cdot,u,Du) = 1-2\zeta_{\epsilon_1}(\nu_1) \frac{\beta_1}{\tilde\zeta_{\beta_0/2}(\beta\cdot\nu)}(\cdot,u,Du).
\end{equation}
Taking $\epsilon_1$ small enough depending on $\Omega, G, \beta_0$ and $|u|_{1;\Omega}$, we have $\chi \ge 1/2$ in $\mathcal{N}$.
Clearly $\chi$ is well defined in $C^2 (\bar\Omega)$ and moreover we then have from \eqref{boundary inequality}
\begin{equation}\label{3.31}
v_1 = u_{11} \le \chi(x,u,Du) u_{11}(0) +g(0)\nu_1 + C_1(1 + M_2 +\epsilon_1^{-2}u_{11}(0)) |x|^2 := f , \quad {\rm on} \ \partial\Omega,
\end{equation}
for a further constant $C_1$ depending on $\Omega, G, \beta_0$ and $|u|_{1;\Omega}$, where $f(0)=u_{11}(0)$.
We shall define an auxiliary function
\begin{equation}\label{aux in Lemma 3.1}
v:= v_1 - f  - \alpha M_2 G- \kappa M^2_2 \phi,
\end{equation}
for positive constants $\alpha$ and $\kappa$ to be determined, where $v_1$, $f, G$ and $\phi$ are functions in \eqref{v tau}, \eqref{3.31}, \eqref{1.2} and \eqref{barrier}.
Assuming that the function $v$ attains its maximum over $\bar \Omega_\rho$ at some point $x^0\in \Omega_\rho$, we have $\mathcal{L}v(x^0)\le 0$. We can assume that $|D^2u(x^0)|\ge C_\sigma$ as large as we want, otherwise from $v$ constructed in \eqref{aux in Lemma 3.1}, $G=\phi=0$ on $\partial\Omega$, $\mathcal{G}[u]\ge 0$, $\phi>0$ and $\chi\ge 1/2$ in $\Omega_\rho$, we can obtain the upper bound for $u_{11}(0)$ and derive the desired estimate \eqref{pure tangential est}. Taking $\tau=e_1$ in \eqref{twice diff} and using F2, we have
\begin{equation}\label{3.36}
\begin{array}{ll}
\mathcal{L}u_{11} \ge \!&\!\!\displaystyle F^{ij}[\tilde D_{x_1x_1}A_{ij}+A_{ij}^{kl}u_{1k}u_{1l}
+2(\tilde D_{x_1}A^k_{ij})u_{1k}] \\
  \!&\!\!\displaystyle  + (D_{p_kp_l}B)u_{1k} u_{1l} +\tilde D_{x_1x_1} B +  2(\tilde D_{x_1} D_{p_k} B)u_{1k}.
 \end{array}
\end{equation}
Using the regularity condition \eqref{2.3} of $A$, we have
\begin{equation}\label{3.37}
\begin{array}{ll}
   F^{ij}A_{ij}^{kl}u_{1k}u_{1l} \!\!&\!\!\displaystyle \ge -\bar \lambda(\epsilon \mathscr{T} |Du_1|^2 + \frac{1}{\epsilon}F^{ij}u_{1i}u_{1j})\\
                                 \!\!&\!\!\displaystyle \ge -\bar \lambda(\epsilon \mathscr{T} M_2^2 + \frac{1}{\epsilon}\mathcal{E}_2^\prime),
\end{array}
\end{equation}
for any $\epsilon >0$, where $\bar \lambda$ is a non-negative function, and $\mathcal{E}_2^\prime = F^{ij}u_{ki}u_{kj}$. Then we have
\begin{equation}\label{3.38}
\mathcal{L}u_{11}(x^0) \ge -(\epsilon M_2^2 + C_\epsilon)\mathscr{T} - \frac{C}{\epsilon}\mathcal{E}_2^\prime,
\end{equation}
for either $|D^2u(x^0)|\ge C_\sigma$ if F5\textsuperscript{+} holds, or $B_p=0$. Here in \eqref{3.38}, $\epsilon$ is a further constant which we may take it to be $2\epsilon \max\bar\lambda$ with $\epsilon$ in \eqref{3.37}, and $C$ is a constant depending on $\bar\lambda$. From (2.27) in \cite{JT-oblique-I}, we have
\begin{equation}\label{L chi}
\begin{array}{ll}
   \mathcal{L}\chi \!\!&\!\! \ge F^{ij}(D_{p_kp_l}\chi)u_{ik}u_{jl} - C \mathscr{T} + F^{ij}\tilde \beta_{ik}^\chi u_{jk}\\
         \!\!&\!\! \ge - \epsilon_1 C\mathcal{E}_2^\prime - (\epsilon M_2 + C_\epsilon)\mathscr{T},
\end{array}
\end{equation}
where $\tilde \beta_{ik}^\chi:=2\tilde D_{x_i}D_{p_k}\chi + (D_z\chi)\delta_{ik}$, $\epsilon_1$ is the constant in \eqref{chi}, and the Cauchy's inequality is used in the second inequality. Consequently, we have
\begin{equation}\label{L f}
  \mathcal{L}f \ge - \epsilon_1 CM_2\mathcal{E}_2^\prime - (\epsilon M_2^2 + C_\epsilon)\mathscr{T}.
\end{equation}

In the F6 case, it is enough to take $\alpha=0$.
For $\mathcal{E}_2^\prime = F^{ij}u_{ik}u_{jk}$ and $\mathcal{E}_2=F^{ij}w_{ik}w_{jk}$, we have from the Cauchy's inequality,
\begin{equation}\label{E2' and E2}
  \begin{array}{rl}
    \mathcal{E}_2^\prime \!\!&\!\!\displaystyle  = F^{ij}(w_{ik}+A_{ik})(w_{jk}+A_{jk})\le 2 \mathcal{E}_2 + C \mathscr{T}.
  \end{array}
\end{equation}
Taking \eqref{E2' and E2} into account and using F6, from \eqref{3.38} and \eqref{L f}, we have
\begin{equation}\label{L u11 Lf using F6}
   \mathcal{L}(v_1-f) \ge -4(\epsilon M_2^2 + C_\epsilon)\mathscr{T}, \quad {\rm at} \ x^0.
\end{equation}
By choosing $\kappa=8\epsilon/\sigma$, from \eqref{full barrier} and \eqref{L u11 Lf using F6}, we then have
\begin{equation}\label{Lv >0}
\mathcal{L}v(x^0)>0.
\end{equation}

In the case when $G$ is uniformly concave in $p$ with respect to $u$ near $\partial\Omega$, we have
\begin{equation}\label{L G}
\begin{array}{ll}
   \mathcal{L}G \!\!&\!\! \ge F^{ij}(D_{p_kp_l}G)u_{ik}u_{jl} - C \mathscr{T} + F^{ij}\tilde \beta_{ik}^G u_{jk}\\
      \!\!&\!\! \ge \gamma_0 \mathcal{E}_2^\prime - (\epsilon M_2 + C_\epsilon)\mathscr{T},
\end{array}
\end{equation}
where $\tilde \beta_{ik}^G:=2\tilde D_{x_i}D_{p_k}G + (D_zG)\delta_{ik}$, $\gamma_0$ is the constant in \eqref{uniform concave of G}, and the Cauchy's inequality is used in the second inequality. Without loss of generality, we can assume $M_2 \ge \epsilon_1/\epsilon$, otherwise we have already obtained the second derivative bound. By choosing $\alpha = 2\epsilon_1C/\gamma_0$, and $\kappa =4\epsilon(2+\alpha)/\sigma$, from \eqref{full barrier}, \eqref{3.38}, \eqref{L f} and \eqref{L G}, we also obtain \eqref{Lv >0}.

The inequality \eqref{Lv >0} in both the above cases implies that $v$ attains its maximum at the boundary of $\Omega_\rho$. From \eqref{3.31} and $G=\phi=0$ on $\partial\Omega$, we have $v\le 0$ on $\partial\Omega$ and $v$ attains its maximum $0$ over $\partial\Omega$ at the point $0\in \partial\Omega$. By the choices of $\alpha$, $\kappa$ and assuming $M_2$ sufficiently large, we have $v<0$ on the inner boundary $\Omega\cap \partial\Omega_\rho$, otherwise we can have a upper bound for $M_2$ and finish the proof. Therefore, $v$ must attain its maximum in $\bar \Omega_\rho$ at the point $0\in \partial\Omega$. Thus, we have
\begin{equation}
   D_\beta v(0) \le 0,
\end{equation}
which gives
\begin{equation}\label{3.46}
   u_{11\beta}(0) \le C u_{11}(0) + \alpha M_2 u_{\beta\beta}(0)+ \kappa  M_2^2 (\beta\cdot \nu)(0),
\end{equation}
where $C$ is a constant depending on $\Omega, G, \beta_0$ and $|u|_{1;\Omega}$.
When F6 holds, we have $\alpha=0$, while when $G$ is uniformly concave in $p$ respect to $u$ in $\mathcal{N}$, we can take $\epsilon_1=\epsilon$, so that \eqref{3.46} becomes
\begin{equation}\label{3.47}
   u_{11\beta}(0) \le \epsilon C M_2^2 + C_\epsilon.
\end{equation}
On the other hand, by differentiating the boundary condition \eqref{1.2} twice in a tangential direction $\tau$, we have
\begin{equation}\label{twice diff bdy}
u_{\tau\tau \beta} = - (D_{p_kp_l}G)u_{k\tau}u_{l\tau} - \tilde D_{x_\tau x_\tau}G - 2 (D_{p_k}\tilde D_{x_\tau}G)u_{k\tau}, \quad {\rm on}\ \partial\Omega,
\end{equation}
where $\beta=G_{p}(\cdot,u,Du)$, $\tilde D_{x_\tau} = \tau\cdot\tilde D_x$ and $\tilde D_x = D_x +DuD_z$. At the point $0$, taking $\tau=e_1$ in \eqref{twice diff bdy}, we have
\begin{equation}\label{3.48}
\begin{array}{ll}
u_{11\beta}(0) \!\!&\!\!\displaystyle \ge \gamma_0 u_{11}^2(0) - C(1+M_2),
\end{array}
\end{equation}
by using the uniform concavity of $G$ in $p$ respect to $u$, where the constant $C$ depends on $G, \Omega$ and $|u|_{1;\Omega}$.
Combining \eqref{3.47} and \eqref{3.48}, we then obtain
\begin{equation}\label{contra 2}
u_{11}(0)\le \sqrt{{2\epsilon C}/{\gamma_0}} M_2 + C_\epsilon.
\end{equation}
By appropriately adjusting the constant $\epsilon$, we obtain the desired pure tangential derivative estimate \eqref{pure tangential est} when either F6 holds or $G$ is uniformly concave in $p$ with respect to $u$ in $\mathcal{N}$.

Next, we shall consider the case when $\mathcal{F}$ is orthogonally invariant. For the function $v_\tau$ in \eqref{v tau}, we need to properly choose a positive constant $c_1$ in this case. We shall use the following first order approximation to the tangent vector $e_1$ at $0$,
\begin{equation}\label{xi}
   \xi = e_1 + \sum_{1\le k<n} \delta_k \nu_1(0) (x_n e_k - x_k e_n),
\end{equation}
where the $x_n$ coordinate is chosen in the direction of $\nu$ at $0$. From (2.59) in \cite{JT-oblique-I}, we have
\begin{equation}\label{tilde v1}
  \tilde v_1:= v_\xi - C_1(1+M_2)|x|^2 \le \tilde v_1(0)(1+C_1|x|^2):= f, \quad {\rm on} \ \partial\Omega.
\end{equation}
For the barrier argument, we employ the auxiliary function $v$ in \eqref{aux in Lemma 3.1} with $\alpha=0$ and $\tilde v_1$ in place of $v_1$, where $f$ is now the function defined in \eqref{tilde v1}. Assuming that the function $v$ attains its maximum over $\bar \Omega_\rho$ at some point $x^0\in \Omega_\rho$, we have $\mathcal{L}v(x^0)\le 0$. We can assume that $|D^2u(x^0)|\ge C_\sigma$ as large as we want, otherwise from $v$ constructed in \eqref{aux in Lemma 3.1}, $\phi=0$ on $\partial\Omega$ and $\phi>0$ in $\Omega_\rho$, we can obtain the upper bound for $u_{11}(0)$ and derive the desired estimate \eqref{pure tangential est}. Since the vector $\xi$ in \eqref{xi} has skew symmetric Jacobian $D\xi$, we then can reduce the calculation of $\mathcal{L}\tilde v_1$ to the argument of the proof of Lemma 2.1 in \cite{JT-oblique-I} when $\mathcal{F}$ is orthogonally invariant. By calculation, the estimate \eqref{L u11 Lf using F6} holds with $\tilde v_1$ in place of $v_1$. By choosing $c_1 = C/\epsilon$, $\kappa=8\epsilon/\sigma$ with $\epsilon$ in \eqref{3.38}, and following the argument of the proof when $\alpha=0$, we can again derive to the pure tangential estimate \eqref{pure tangential est} and complete the proof.
\end{proof}

\begin{remark}\label{Remark3.3}
If $B_p=0$ in the F6 case, we can avoid to assume $|D^2u(x^0)|$ large so that we do not need the positive lower bound of $f$ in $\mathcal{N}$. Therefore, the truncation for $\nu_1$ is not needed in this case. If $B_p=0$ in the case when $G$ is uniformly concave in $p$ with respect to $u$, we can also avoid such truncation for $\nu_1$, but only get an pure tangential estimate in the form $M_2^+(\tau) \le \epsilon M_2 + C_\epsilon(1+M_2^+(\beta))$, where $M_2^+(\beta)=\sup\limits_{\partial\Omega}u_{\beta\beta}$.
\end{remark}

From the previous estimates established in this section, we obtain the following global second derivative estimates.

\begin{Theorem}\label{Th3.2}
Let $u\in C^4(\Omega)\cap C^3(\bar \Omega)$ be an admissible solution of the boundary value problem \eqref{1.1}-\eqref{1.2} in a $C^{3,1}$ domain $\Omega\subset \mathbb{R}^n$, which is uniformly $(\Gamma, A, G)$-convex with respect to $u$. Assume that $\mathcal F$ is orthogonally invariant satisfying  F1-F5 and F6, $A\in C^2(\bar \Omega\times \mathbb{R}\times \mathbb{R}^n)$ is regular in $\bar\Omega$, $B > a_0, \in C^2(\bar \Omega\times \mathbb{R}\times \mathbb{R}^n)$ is convex in $p$, $G\in C^{2,1}(\partial\Omega\times \mathbb{R}\times\mathbb{R}^n)$ is oblique and uniformly concave in $p$, with respect to $u$, and there exists a function $\bar u\in C^2(\bar \Omega)$ which is admissible with respect to $u$. Assume also either {\rm (i)} F5\textsuperscript{+} holds, or {\rm (ii)} $\bar u$ satisfies the subsolution condition \eqref{subsolution} and $B$ is independent of $p$, and either {\rm (iii)}  $D_{pp}A = 0$ in $\partial\Omega\times\mathbb{R}\times \mathbb{R}^n$ or {\rm (iv)} $\mathcal G [u] \ge 0$ in some neighbourhood $\mathcal{N}$ of $\partial \Omega$.  Then we have the following estimate
\begin{equation}\label{2nd derivative bd for nonlinear G}
\sup_{\Omega}|D^2u| \le C,
\end{equation}
where the constant $C$ depends on $n, A, B, F, G,\beta_0, \Omega, \bar u$ and $|u|_{1;\Omega}$.
\end{Theorem}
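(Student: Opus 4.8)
The plan is to combine the interior (global) estimate of Theorem~\ref{Th3.1} with the boundary estimates collected in Section~\ref{Subsection 3.2}, and then close the resulting inequality by absorbing the small-$\epsilon$ terms. First I would verify that the hypotheses of Theorem~\ref{Th3.2} imply those of Theorem~\ref{Th3.1}: orthogonal invariance plus F1--F5, regularity of $A$, convexity of $B$ in $p$, admissibility of $u$ and $\bar u$, and either F5\textsuperscript{+} or the subsolution condition \eqref{subsolution} on $\bar u$. Hence \eqref{global 2nd bound regular A} holds, i.e. $\sup_\Omega |D^2 u| \le C(1 + \sup_{\partial\Omega}|D^2 u|)$, so it remains to bound $\sup_{\partial\Omega}|D^2 u|$.

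Next I would reduce the boundary bound to the three types of second derivative at $\partial\Omega$: pure tangential $u_{\tau\tau}$, mixed tangential-oblique $u_{\tau\beta}$, and pure oblique $u_{\beta\beta}$. The mixed ones are already controlled by \eqref{mixed nonlinear G}, $|u_{\tau\beta}|\le C$, from tangential differentiation of the boundary condition. The pure oblique derivative is handled by Lemma~2.2 of \cite{JT-oblique-I} as recorded in \eqref{upper bound for double oblique}: under F1--F6, obliqueness, and either F5\textsuperscript{+} or $B_p=0$, we get $\sup_{\partial\Omega} u_{\beta\beta} \le \epsilon M_2 + C_\epsilon$, with $\epsilon=0$ allowed when $B_p=0$; all these hypotheses are present here (in case (ii), $B$ is independent of $p$). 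The pure tangential derivative is exactly the content of Lemma~\ref{Lemma 3.1}: its hypotheses — uniform $(\Gamma,A,G)$-convexity, F1--F5, $A$ regular, uniform concavity of $G$ in $p$ w.r.t. $u$, either F5\textsuperscript{+} or $B_p=0$, either $D_{pp}A=0$ on $\partial\Omega$ or $\mathcal G[u]\ge 0$ near $\partial\Omega$, and (here) orthogonal invariance of $\mathcal F$ — are all assumed in Theorem~\ref{Th3.2}, so \eqref{pure tangential est} gives $M_2^+(\tau) \le \epsilon M_2 + C_\epsilon$. Since any $u_{ij}$ at a boundary point can be written as a bounded linear combination of $u_{\tau\tau}$, $u_{\tau\beta}$, $u_{\beta\beta}$ with coefficients controlled by $\beta_0$ (using obliqueness to invert the frame change, as in \eqref{decomposition}), we obtain $\sup_{\partial\Omega}|D^2 u| \le \epsilon M_2 + C_\epsilon$ for every $\epsilon>0$.

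The final step is the absorption. Feeding $\sup_{\partial\Omega}|D^2 u| \le \epsilon M_2 + C_\epsilon$ into $M_2 \le C(1 + \sup_{\partial\Omega}|D^2 u|)$ yields $M_2 \le C + C\epsilon M_2 + C C_\epsilon$; choosing $\epsilon$ small enough that $C\epsilon \le 1/2$ gives $M_2 \le 2C + 2CC_\epsilon =: C$, which is \eqref{2nd derivative bd for nonlinear G}, with $C$ depending only on $n, A, B, F, G, \beta_0, \Omega, \bar u$ and $|u|_{1;\Omega}$ as claimed. I expect the main obstacle to be bookkeeping rather than conceptual: one must check carefully that the two alternatives (i) F5\textsuperscript{+} and (ii) subsolution condition with $B_p=0$ are each compatible with \emph{all three} of the invoked results — Theorem~\ref{Th3.1}, Lemma~2.2 of \cite{JT-oblique-I}, and Lemma~\ref{Lemma 3.1} — and that the regularity $u\in C^4(\Omega)\cap C^3(\bar\Omega)$ and $\Omega\in C^{3,1}$, $G\in C^{2,1}$ suffice for the differentiations used in those lemmas. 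The genuinely new analytic work has already been done in Theorem~\ref{Th3.1} and Lemma~\ref{Lemma 3.1}; Theorem~\ref{Th3.2} is their synthesis, so the proof is short once the hypothesis-matching is laid out.
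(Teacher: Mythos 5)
Your proposal follows essentially the same route as the paper's proof: Theorem \ref{Th3.1} reduces the global bound to the boundary, the boundary bound is assembled from \eqref{mixed nonlinear G}, \eqref{upper bound for double oblique} and \eqref{pure tangential est}, and the $\epsilon$-terms are absorbed at the end. The one imprecision is the passage from the one-sided estimates (all of \eqref{upper bound for double oblique} and \eqref{pure tangential est} are upper bounds only, so the frame decomposition yields only $\sup_{\partial\Omega}u_{\xi\xi}\le \epsilon M_2+C_\epsilon$ for all unit $\xi$) to the two-sided bound $\sup_{\partial\Omega}|D^2u|\le \epsilon M_2+C_\epsilon$: this step is not pure linear algebra but uses the concavity F2 together with admissibility and the equation to bound $D^2u$ from below, as the paper indicates.
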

\begin{proof}
We first observe that the global second derivative estimate \eqref{global 2nd bound regular A} in Theorem \ref{Th3.1}, the mixed tangential oblique derivative estimate \eqref{mixed nonlinear G}, the pure oblique derivative estimate \eqref{upper bound for double oblique}, the pure tangential derivative estimate \eqref{pure tangential est} in Lemma \ref{Lemma 3.1}, hold under the assumptions of Theorem \ref{Th3.2}.
Combining these boundary derivative estimates \eqref{mixed nonlinear G}, \eqref{upper bound for double oblique} and \eqref{pure tangential est}, we get an estimate
\begin{equation}
  \sup_{\partial\Omega}u_{\xi\xi} \le \epsilon M_2 + C_\epsilon,
\end{equation}
for any constant unit vector $\xi$ and constant $\epsilon>0$, where $C_\epsilon$ is a constant depending on $\epsilon, F, A, B, G, \beta_0, \Omega$ and $|u|_{1;\Omega}$. Using the concavity property F2 of $F$, we get the full boundary estimate
\begin{equation}\label{full 2nd bd on the boundary}
   \sup_{\partial\Omega}|D^2u| \le \epsilon M_2 + C_\epsilon,
\end{equation}
for any $\epsilon>0$. From \eqref{full 2nd bd on the boundary} and \eqref{global 2nd bound regular A}, the second derivative estimate \eqref{2nd derivative bd for nonlinear G} holds by taking $\epsilon$ sufficiently small.
\end{proof}

\begin{remark}\label{Remark3.4}
In Lemma \ref{Lemma 3.1} and Theorem \ref{Th3.2}, we need only assume a tangential concavity condition on $G$. Namely we call the function $G\in C^2(\partial\Omega \times \mathbb{R}\times \mathbb{R}^n)$ tangentially locally uniformly concave, (tangentially concave), in $p$ if $G_{p_kp_l}\tau_k\tau_l < 0, ( \le 0),$ in $\partial\Omega\times \mathbb{R}\times \mathbb{R}^n$ for all tangential vectors $\tau$.  If
\begin{equation}\label{tangential concavity}
G_{p_kp_l}(\cdot, u, Du)\tau_k\tau_l \le - \gamma_0 |\tau|^2, \quad {\rm on}  \ \partial\Omega,
\end{equation}
for all tangential vectors $\tau$,  $u\in C^1(\bar \Omega)$ and some constant $\gamma_0>0,  (= 0)$, then we say the function $G$ is tangentially uniformly concave, (tangentially concave) in $p$, with respect to $u$. From the proof of Lemma \ref{Lemma 3.1}, we can replace the concavity of $G$ with respect to $p$ by tangential concavity in Lemma \ref{Lemma 3.1}, for the cases where F6 holds or $\mathcal F$ is orthogonally invariant and consequently also in Theorem \ref{Th3.2}. Also replacing $G$ by
\begin{equation}
\tilde G:= 1-e^{-cG}
\end{equation}
 for a sufficiently large positive constant $c$, it follows that if $G$ is oblique and tangentially locally uniformly concave,  then
$\tilde G$ is locally uniformly concave in $p$, in the sense that
\begin{equation}
\tilde G_{p_kp_l}\xi_k\xi_l \le -\delta|\xi|^2
\end{equation}
for all vectors $\xi \in \mathbb{R}^n$, $x\in \partial \Omega$ and $|z| + |p| \le K$ for any constant $K$ where $\delta$ is a positive constant depending on $K$ and $G$. It follows then that for oblique $\mathcal G$ in Lemma \ref{Lemma 3.1}, we need only assume $G$ is tangentially locally uniformly concave in the general case. A particular example of an oblique, tangentially locally uniformly concave boundary operator $\mathcal G$, which was considered by Urbas in \cite{Urbas1995, Urbas1998}, is given by
\begin{equation}\label{another nonlinear G}
G(x,z,p)=p\cdot \nu  + G^\prime(x,z,p^T),
\end{equation}
where $p^T=p-(p\cdot \nu)\nu$ and $G^\prime$ is tangentially locally uniformly concave as a function of $p$.
\end{remark}

The case (iv) of Theorem \ref{Th3.2} can be applied to the second boundary value problem of certain augmented Hessian equations. Here we are given a $C^1$ mapping $Y$ from $\bar\Omega\times \mathbb{R}\times \mathbb{R}^n$, satisfying det$Y_p \ne 0$ and the matrix function $A$ is defined by
\begin{equation}\label{A by Y}
A(x,z,p)=-Y_p^{-1}(Y_x+Y_z\otimes p).
\end{equation}
The second boundary value problem for equation \eqref{1.1} is to prescribe the image
\begin{equation}\label{second bvp}
   Tu(\Omega)=\Omega^*,
\end{equation}
where $\Omega^*$ is another given domain in $\mathbb{R}^n$ and the mapping $T$ is defined by
\begin{equation*}
   Tu:=Y(\cdot, u,Du).
\end{equation*}
 The domain $\Omega$ is called uniformly $(\Gamma, Y)$-convex with respect to $\Omega^*$ and $u\in C^0(\partial\Omega$), if
\begin{equation*}
   K_A[\partial\Omega](x,u(x),p)+\mu \nu(x) \otimes \nu(x) \in \Gamma,
\end{equation*}
for all $x\in \partial\Omega$, $Y(x,u(x),p)\in \bar\Omega^*$, and some constant $\mu>0$. If $\Omega^* \in C^2$, and $\phi \in C^2(\mathbb{R}^n)$ is a defining function for $\Omega^*$, satisfying $\phi = 0$, $D\phi \ne 0$ on $\partial\Omega$, $\phi>0$ in
$\Omega$ and $\phi <0$ outside $\bar\Omega$, then condition \eqref{second bvp} is equivalent to
 $\mathcal G[u] = 0$ on $\partial\Omega$ and $\mathcal G[u] > 0$ in $\Omega$, where $G = \phi^* \circ Y$.
Furthermore $\Omega$ is  uniformly $(\Gamma, Y)$-convex with respect to $\Omega^*$ and $u$ if and only if $\Omega$ is uniformly $(\Gamma, A,G)$-convex with respect to $u$. When $\Gamma$ is the positive cone $K^+$, we may simply refer to $\Omega$ as uniformly $Y$-convex with respect to $\Omega^*$ and $u$, which is equivalent to the inequality
\begin{equation*}
-(\delta_i\nu_j (x)- A^k_{ij}(x,u(x),p)\nu_k(x))\tau_i\tau_j \ge \delta_0
\end{equation*}
for all $x\in\partial\Omega$, $Y(x,u(x),p) \in \bar\Omega^*$, unit tangent vectors $\tau = \tau(x)$ and some constant $\delta_0 >0$, in agreement with the definitions in \cite{Tru2008,TruWang2009}. Following \cite{Tru2008}, we call the target domain $\Omega^*$,  uniformly $Y^*$ convex with respect to $\Omega$ and $u$, if
\begin{equation*}
-(\delta_i\nu^*_j (y)- (A^*)^k_{ij}(x,u(x),p)\nu^*_k(y))\tau^*_i\tau^*_j \ge \delta^*_0
\end{equation*}
for all $y\in\partial\Omega^*$, $x\in\bar\Omega^*$, $p\in \mathbb{R}^n$ satisfying $y=Y(x,u(x),p)$, unit tangent vectors $\tau^* = \tau^*(y)$ and some constant $\delta^*_0 >0$, where $\nu^*$ denotes the unit inner normal to $\Omega^*$
and
\begin{equation*}
[(A^*)_{ij}^k]= - Y_{p}^{-1}(D_{pp}Y^k)(Y_p^{-1})^t,
\end{equation*}
for each $k=1,\cdots, n$.

We then have the following corollary of Theorem \ref{Th3.2}.
\begin{Corollary}\label{Cor 3.1}
Let $u\in C^4(\Omega)\cap C^3(\bar \Omega)$ be an admissible solution of the boundary value problem \eqref{1.1}-\eqref{second bvp} with $A$ given by \eqref{A by Y}, where $\partial\Omega, \partial\Omega^* \in C^{3,1}$ and $\Omega, \Omega^*$ are uniformly $Y$-convex, uniformly $Y^*$-convex with respect to each other and $u$. Assume that
$\Gamma = K^+$ and $\mathcal F$ is orthogonally invariant satisfying  F1-F5 and F6, $A\in C^2(\bar \Omega\times \mathbb{R}\times \mathbb{R}^n)$ is regular in $\bar\Omega$, $B > a_0, \in C^2(\bar \Omega\times \mathbb{R}\times \mathbb{R}^n)$ is convex in $p$, there exists a function $\bar u\in C^2(\bar \Omega)$ which is admissible with respect to $u$. Assume also either {\rm (i)} F5\textsuperscript{+} holds, or {\rm (ii)} $\bar u$ satisfies the subsolution condition \eqref{subsolution} and $B$ is independent of $p$. Then we have the estimate
\begin{equation}
\sup_{\Omega}|D^2u| \le C,
\end{equation}
where the constant $C$ depends on $n, A, B, F, G,\delta_0, \delta^*_0, \Omega, \Omega^*, \bar u$ and $|u|_{1;\Omega}$.
\end{Corollary}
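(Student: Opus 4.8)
The plan is to recast the second boundary value problem \eqref{1.1}--\eqref{second bvp} as an oblique problem of the form \eqref{1.1}--\eqref{1.2} and to apply Theorem \ref{Th3.2}, in its case (iv). With $\phi^*$ the defining function of $\Omega^*$ as above and $G:=\phi^*\circ Y$, the equivalence recorded before the corollary shows immediately that $\mathcal G[u]=0$ on $\partial\Omega$ and $\mathcal G[u]>0$ in $\Omega$; in particular $\mathcal G[u]\ge 0$ in a neighbourhood of $\partial\Omega$, so hypothesis (iv) of Theorem \ref{Th3.2} holds. By the second equivalence recorded there, the uniform $Y$-convexity of $\Omega$ with respect to $\Omega^*$ and $u$ is precisely the uniform $(\Gamma,A,G)$-convexity of $\Omega$ with respect to $u$, which is the domain hypothesis of Theorem \ref{Th3.2}. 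All the remaining structural hypotheses ($\mathcal{F}$ orthogonally invariant satisfying F1--F5 and F6, $A\in C^2$ regular, $B>a_0$ in $C^2$ and convex in $p$, the existence of $\bar u\in C^2(\bar\Omega)$ admissible with respect to $u$, and alternative (i) or (ii)) are inherited verbatim from the hypotheses of the corollary. Thus it remains only to verify that $G$ is oblique, that $G$ has the required regularity, and that $G$ is tangentially uniformly concave in $p$ with respect to $u$ on $\partial\Omega$; the last is enough because, $\mathcal{F}$ being orthogonally invariant, Remark \ref{Remark3.4} permits the pure concavity hypothesis of Theorem \ref{Th3.2} to be weakened to tangential concavity (and, if desired, one may first replace $G$ by $\tilde G=1-e^{-cG}$ with $c$ large to recover full uniform concavity near $\partial\Omega$ without affecting the zero set of $\mathcal G[u]$ or obliqueness).

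For obliqueness, on $\partial\Omega$ we have $Y(x,u,Du)=Tu(x)\in\partial\Omega^*$, hence $D_y\phi^*(Tu)=|D_y\phi^*(Tu)|\,\nu^*(Tu)$ and so $\beta:=G_p=Y_p^{\,t}D_y\phi^*(Tu)$ satisfies $\beta\cdot\nu=|D_y\phi^*(Tu)|\,\nu^*(Tu)\cdot(Y_p\nu)$. The positivity of this expression, and a quantitative lower bound $\beta\cdot\nu\ge\beta_0>0$ with $\beta_0$ depending only on $\delta_0,\delta_0^*$ and the data, follows from the uniform $Y$- and $Y^*$-convexity of $\Omega$ and $\Omega^*$ with respect to each other and $u$, as established in \cite{Tru2008,TruWang2009}; this argument carries over directly to the present setting. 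The regularity $G\in C^{2,1}(\partial\Omega\times\mathbb R\times\mathbb R^n)$ follows by composition from $\phi^*\in C^{3,1}$ (a consequence of $\partial\Omega^*\in C^{3,1}$) and the smoothness of $Y$ forced by the requirement $A=-Y_p^{-1}(Y_x+Y_z\otimes p)\in C^2$, together with the a priori bound on $|u|_{1;\Omega}$; this is routine.

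The main point is the tangential uniform concavity of $G$ in $p$ with respect to $u$, which is exactly where the $Y^*$-convexity of $\Omega^*$ is used. Differentiating $G=\phi^*\circ Y$ twice in $p$ gives $G_{p_kp_l}=\phi^*_{y_my_n}(Y)\,Y^m_{p_k}Y^n_{p_l}+\phi^*_{y_m}(Y)\,Y^m_{p_kp_l}$. On $\partial\Omega$, substituting $D_{pp}Y^m=-Y_p(A^*)^mY_p^{\,t}$ from the definition of $[(A^*)^m_{ij}]$ and $D_y\phi^*(Tu)\parallel\nu^*(Tu)$, and choosing $\phi^*$ so that $D^2_{yy}\phi^*$ agrees on the tangent space of $\partial\Omega^*$ with $-\,\mathrm{II}_{\partial\Omega^*}$, one finds that for any tangential vector $\tau$ to $\partial\Omega$ the contraction $G_{p_kp_l}\tau_k\tau_l$ equals $|D_y\phi^*(Tu)|$ times $-\big(D_i\nu^*_j-(A^*)^m_{ij}\nu^*_m\big)\zeta_i\zeta_j$ modulo terms controlled by $|u|_{1;\Omega}$ and the data, where $\zeta=Y_p^{\,t}\tau$. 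The uniform $Y^*$-convexity of $\Omega^*$ with respect to $\Omega$ and $u$ then yields $G_{p_kp_l}\tau_k\tau_l\le-\gamma_0|\tau|^2$ on $\partial\Omega$ for tangential $\tau$, with $\gamma_0>0$ depending on $\delta_0^*$ and the data. Feeding this, together with obliqueness, the domain convexity and condition (iv), into Theorem \ref{Th3.2} (via Remark \ref{Remark3.4}) gives $\sup_\Omega|D^2u|\le C$ with $C$ depending on $n,A,B,F,G,\delta_0,\delta_0^*,\Omega,\Omega^*,\bar u$ and $|u|_{1;\Omega}$, as claimed.

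The principal obstacle is the geometric computation of the previous paragraph: fixing the correct defining function $\phi^*$ and carrying out the index bookkeeping that identifies $D^2_{pp}G$ on $\partial\Omega$, up to controlled lower order terms, with the $A^*$-curvature form of $\partial\Omega^*$ pulled back by $Y_p$, so that the uniform $Y^*$-convexity of $\Omega^*$ supplies the uniform concavity needed to run the tangential second derivative estimate of Lemma \ref{Lemma 3.1}. The obliqueness bound $\beta\cdot\nu\ge\beta_0$ in terms of $\delta_0$ and $\delta_0^*$ is the secondary technical point; both facts are in essence contained in \cite{Tru2008,TruWang2009}, but must be reassembled in the precise form required by Theorem \ref{Th3.2}.
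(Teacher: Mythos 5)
Your proposal follows essentially the same route as the paper: recast \eqref{second bvp} as case (iv) of Theorem \ref{Th3.2} with $G=\phi^*\circ Y$, obtaining the obliqueness from the mutual $Y$-/$Y^*$-convexity (as in \cite{Tru2008, LT2016}) and the uniform concavity of $G$ in $p$ near $\partial\Omega$ from the uniform $Y^*$-convexity of $\Omega^*$ for a suitably chosen defining function $\phi^*$. The only differences are cosmetic: the paper asserts full uniform concavity of $G$ with respect to $u$ (so Remark \ref{Remark3.4} is not needed), and your explicit identification of $G_{p_kp_l}\tau_k\tau_l$ with the $A^*$-curvature form needs a little more care than written, since the two second-order terms are evaluated at $Y_p\tau$ and $Y_p^t\tau$ respectively and $Y_p\tau$ need not be tangential to $\partial\Omega^*$ --- precisely the details that the choice of $\phi^*$ in the cited references is designed to absorb.
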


\begin{proof}
The uniform $Y^*$-convexity of $\Omega^*$ implies that $G=\phi^* \circ Y$ is uniformly concave in $p$ with respect to $u$ when $Tu$ lies in some neighbourhood $\mathcal N^*$ of $\partial\Omega^*$, for some defining function $\phi^*\in C^2(\bar \Omega^*)$ satisfying $\phi^*=0$, $D\phi^* \neq 0$ on $\partial\Omega^*$ and $\phi^*>0$ in $\Omega^*$. Following \cite{LT2016, Tru2008} we then infer an obliqueness estimate \eqref{obliqueness} for $G$ with respect to $u$. Since $\mathcal{G}[u]\ge 0$ in $\Omega$ and $G$ is clearly uniformly concave with respect to $u$ on $\partial \Omega$, we can apply Theorem \ref{Th3.2} (iv) and lead to the conclusion of Corollary \ref{Cor 3.1}.
\end{proof}

\begin{remark}\label{Remark3.5}
For the construction of admissible functions in the optimal transportation and more general generated prescribed Jacobian cases see \cite{JT2014}. Also the existence of the admissible function $\bar u$ in Corollary \ref{Cor 3.1} may be replaced by the $Y$-boundedness of $\Omega$ with respect to $\Omega^*$ and $u$, as introduced in \cite{Tru2006}, that is the existence of a function $\phi \in C^2(\bar\Omega)$ satisfying
\begin{equation*}
D^2\phi(x) - A^k(x, u(x),p)D_k\phi > 0
\end{equation*}
for all $x\in\bar\Omega$, $Y(x,u(x),p)\in \bar\Omega^*$, where $A^k=D_{p_k}A$. With this alternative hypothesis the special case of Hessian quotients $F_{n,k}$ and $Y = Y(x,p)$ generated by a cost function is treated in \cite{vonNessi2010}.
\end{remark}

We consider now the situation where the lack of strict regularity of $A$ is offset by strong monotonicity conditions on either $G$ or $A$ with respect to $z$. Our main concern is with the semilinear $\mathcal{G}$ case.
When $\mathcal G$ is semilinear (or quasilinear), we still obtain
the pure second order oblique derivative estimate for $u_{\beta\beta}$ from Lemma 2.2 in  \cite{JT-oblique-I} in the extended form,
\begin{equation}\label{pure oblique}
\sup_{\partial\Omega}u_{\beta\beta}\le \epsilon M_2 + C_\epsilon(1+ M_2^\prime),
\end{equation}
for any $\epsilon>0$, where $M_2 = \sup\limits_\Omega |D^2 u|$, $M^\prime_2 = \sup\limits_{\partial\Omega} \sup\limits_{|\tau| = 1, \tau\cdot\nu=0} |u_{\tau\tau}|$, and $C_\epsilon$ is a constant depending on $\epsilon, F, A, B, G, \Omega, \beta_0$ and $|u|_{1;\Omega}$. Here we need to assume as before that  $\Omega$ is uniformly $(\Gamma, A, G)$-convex with respect to $u$, $F$ satisfies F1-F5 and either F5\textsuperscript{+} holds or $D_pB=0$. When F6 holds, $M_2^\prime$ does not appear at the right hand side of the estimate \eqref{pure oblique}, which reduces to the estimate \eqref{upper bound for double oblique}.

The remaining estimate we need to establish is the pure tangential derivative on the boundary with regular $A$. In order to ensure that the monotonicity conditions are at least independent of gradient estimates, analogous to our boundary convexity conditions, we will restrict the matrix $A$ to the special form:
\begin{equation}\label{A for monotone case}
 A(x,z,p)=A_0(x,z)+\tilde A(x)\cdot p,
\end{equation}
where $A_0 \in C^2(\bar \Omega \times \mathbb{R})$ and $\tilde A = (A^1,\cdots, A^n)$ where $A^k, k=1,\cdots,n, \in C^2(\bar \Omega)$ are $n\times n$ symmetric matrix functions. Correspondingly we will assume $B$ has the form \eqref{special B}
where  $B_0 \in C^2(\bar \Omega \times \mathbb{R})$ and $B_1(p)\in C^2(\mathbb{R}^n)$. Note that when $A$ has the form
\eqref{A for monotone case}, the $A$-curvature matrix $K_A[\partial\Omega]$ is independent of both $p$ and $z$ so that we may simply refer to  $(\Gamma, A, G)$-convexity with respect to $u$ as $(\Gamma, A)$-convexity.
As in Section 3.1 of \cite{JT-oblique-I}, it is convenient here  to normalise $G$ by dividing by $\beta\cdot\nu$ so that we may then assume $\beta\cdot\nu = 1$ on $\partial\Omega$, whence $\beta^\prime := \beta-\nu$ is tangential to $\partial\Omega$.

\begin{Lemma}\label{Lemma 3.2}
Let $u\in C^2(\bar \Omega)\cap C^4(\Omega)$ be an admissible solution of the boundary value problem \eqref{1.1}-\eqref{1.2} in a $C^{2,1}$ domain $\Omega \subset \mathbb{R}^n$. Assume that $F$ satisfies F1-F3 and F5, $A\in C^2(\bar \Omega\times \mathbb{R}\times\mathbb{R}^n)$, $B>a_0, \in C^2(\bar\Omega\times \mathbb{R}\times\mathbb{R}^n) $ is convex in $p$ and $G\in C^2(\partial\Omega\times \mathbb{R}\times \mathbb{R}^n)$ is oblique and semilinear and either F5\textsuperscript{+} holds or $B$ is independent of $p$. Assume one of the further conditions is satisfied:
\begin{itemize}
\item[(i):]  \eqref{A for monotone case}, \eqref{special B} hold in some neighbourhood $\mathcal{N}$ of $\partial\Omega$, $F$ satisfies F4, $\Omega$ is uniformly $(\Gamma, A)$-convex  and $ \kappa =\min\limits_{\partial\Omega}\varphi_z(\cdot,u) >0$;
\item[(ii):] \eqref{A for monotone case}, \eqref{special B} hold in $\Omega$ with $\kappa = \min\limits_{\Omega}\lambda_1 (A_z)(\cdot,u) > 0$, where $\lambda_1(A_z)$ denotes the minimum eigenvalue of the matrix $A_z$.
\end{itemize}
Then for any  tangential vector field $\tau$, $|\tau|\le 1$, we have the estimate
\begin{equation}\label{pure tangential}
M_2^+(\tau) \le (C M_2 + C^\prime)/\kappa,
\end{equation}
where $M_2^+(\tau)=\sup\limits_{\partial\Omega}u_{\tau\tau}$, the constant $C$ depends on $F,\Omega, \beta^\prime, \tilde A$ as well as $\inf\limits_{\mathcal N}\lambda_1 (A_z)(\cdot, u)$, $\inf\limits_{\mathcal N}B_z(\cdot, u)$ in case {\rm (i)} and $\min\limits_{\partial\Omega}\varphi_z(\cdot,u)$, $\inf\limits_{\Omega}B_z(\cdot,u)$ in case {\rm (ii)}, while the constant $C^\prime$ depends on $F, A, B, G, \Omega$ and $|u|_{1;\Omega}$.
\end{Lemma}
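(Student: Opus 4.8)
The plan is to follow the barrier method used for the pure tangential estimate in Lemma \ref{Lemma 3.1}, but now to exploit the monotonicity in $z$ (of $\varphi$ in case (i), or of $A_0$ in case (ii)) to produce the good sign that was previously supplied by the uniform concavity of $G$ in $p$. Since $\mathcal G$ is semilinear we normalise so that $\beta\cdot\nu = 1$ on $\partial\Omega$, with $\beta' = \beta - \nu$ tangential. As in Lemma \ref{Lemma 3.1} I would consider the quantity $v_\tau = u_{\tau\tau} + \tfrac{c_1}{2}|u_\tau|^2$ maximised over $\partial\Omega$ and unit tangential $\tau$, reduce to a point $x_0 = 0$ with $\tau_0 = e_1$, decompose $e_1 = \tau + b\beta$ with $b(0) = 0$ as in \eqref{b}–\eqref{decomposition}, use the mixed estimate \eqref{mixed nonlinear G} and the oblique estimate \eqref{pure oblique} to get a boundary inequality of the form $v_1 = u_{11} \le f$ on $\partial\Omega$ with $f(0) = u_{11}(0)$ and $f$ built from a truncated multiple of $\nu_1$ times $u_{11}(0)$ plus lower order terms (the construction of $\chi$ in \eqref{chi}–\eqref{3.31} carries over verbatim since it only uses $G\in C^2$ and obliqueness).

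Next I would set up the auxiliary function $v := v_1 - f - \kappa M_2^2\phi$ on $\Omega_\rho$ (no $\alpha M_2 G$ term is needed here, as the $z$-monotonicity, not a concavity of $G$, will furnish the contradiction at the boundary), where $\phi = d - d^2/2\rho$ is the barrier from the uniform $(\Gamma,A)$-convexity satisfying \eqref{barrier}, hence \eqref{full barrier}. The interior computation is as in Lemma \ref{Lemma 3.1}: because $A$ is only regular, \eqref{twice diff} together with the regularity inequality \eqref{2.3} and the splitting $u_{1k} = w_{1k} + A_{1k}$ gives, as in \eqref{3.37}–\eqref{3.38}, $\mathcal L u_{11}(x^0) \ge -(\epsilon M_2^2 + C_\epsilon)\mathscr T - (C/\epsilon)\mathcal E_2'$, and using F6 (or, in the orthogonally invariant situation of Lemma \ref{Lemma 3.1}, the skew-symmetric first order correction) together with \eqref{E2' and E2} we absorb $\mathcal E_2'$ and obtain $\mathcal L(v_1 - f) \ge -C(\epsilon M_2^2 + C_\epsilon)\mathscr T$; choosing $\kappa$ a suitable multiple of $\epsilon/\sigma$ and using \eqref{full barrier} yields $\mathcal L v(x^0) > 0$ whenever $M_2$ is large. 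Hence $v$ attains its maximum on $\partial\Omega_\rho$; since $v \le 0$ on $\partial\Omega$ with equality at $0$, and $v < 0$ on the inner boundary $\Omega\cap\partial\Omega_\rho$ for $M_2$ large (otherwise we are done), $v$ attains its maximum over $\bar\Omega_\rho$ at $0 \in \partial\Omega$, giving $D_\beta v(0) \le 0$, i.e. an upper bound $u_{11\beta}(0) \le C u_{11}(0) + \kappa M_2^2(\beta\cdot\nu)(0) = C u_{11}(0) + \kappa M_2^2$.

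The decisive step, and where the two cases diverge, is the lower bound for $u_{11\beta}(0)$ obtained by differentiating the semilinear boundary condition $\beta\cdot Du = \varphi$ twice tangentially at $0$. This produces $u_{11\beta}(0) = u_{11\tau_i\tau_j}\cdot(\cdots)$ — more precisely, after accounting for the tangential derivatives of $\beta$ and of $\nu$, a term of the form $\varphi_z(0,u)\,u_{11}(0)$ plus terms linear in $u_{11}$ (coming from $D\beta'$ acting on $D^2u$, controlled by $\beta'$ and $\tilde A$) plus $O(1 + M_2)$. In case (i) the monotonicity $\varphi_z \ge \kappa > 0$ directly gives $u_{11\beta}(0) \ge \kappa\,u_{11}^2(0)/u_{11}(0)$-type control — more accurately $u_{11\beta}(0) \ge \kappa u_{11}(0)\cdot(\text{something}) - C(1 + M_2 + u_{11}(0))$; combining with the upper bound and taking $M_2^2$ on the other side yields $\kappa\,M_2^+(\tau) \le C M_2 + C'$. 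In case (ii), where $\varphi$ need not be monotone, the good term instead comes from the interior equation: the form \eqref{A for monotone case} means $w_{11} = u_{11} - (A_0)_{11}(x,u) - (\tilde A\cdot Du)_{11}$, and differentiating the equation once (using \eqref{once diff}) introduces $F^{ij}(A_0)_{ij,z}\,u_{11}$, so that the hypothesis $\lambda_1(A_z) \ge \kappa > 0$ converts this into a term $\ge \kappa\mathscr T\,u_{11}$, which — when carried through \eqref{3.36} rather than dropped — supplies the missing positivity; this is the mechanism by which monotonicity of $A$ in $z$ substitutes for concavity of $G$ in $p$. I expect the main obstacle to be bookkeeping in this last step: keeping track, in each case, of exactly which linear-in-$u_{11}$ error terms arise from $D\beta'$, $D\nu$, $D_z A_0$ and $B_z$, and verifying that the coefficient of $u_{11}^2$ is genuinely bounded below by a multiple of $\kappa$ after all truncations (the $\zeta_{\epsilon_1}$ and $\tilde\zeta_{\beta_0/2}$ cutoffs) have been accounted for, so that \eqref{pure tangential} holds with the stated dependence of $C$ and $C'$.
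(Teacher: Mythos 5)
Your skeleton (boundary decomposition, barrier, matching upper and lower bounds for $u_{11\beta}(0)$, with $\varphi_z$ supplying the good term in case (i) and $A_z$ in case (ii)) is the right one, but the execution has a gap that defeats the stated estimate. You import the interior computation of Lemma \ref{Lemma 3.1} wholesale: the regularity inequality \eqref{2.3} giving $\mathcal L u_{11}(x^0)\ge -(\epsilon M_2^2+C_\epsilon)\mathscr T-(C/\epsilon)\mathcal E_2^\prime$, absorption of $\mathcal E_2^\prime$ via F6 or orthogonal invariance, and the barrier coefficient $\kappa M_2^2$. Neither F6 nor orthogonal invariance is among the hypotheses of Lemma \ref{Lemma 3.2}, so that absorption is unavailable. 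More seriously, even granting it, your resulting upper bound $u_{11\beta}(0)\le Cu_{11}(0)+\epsilon CM_2^2+C_\epsilon$ is quadratic in $M_2$, whereas the lower bound obtained by twice tangentially differentiating the \emph{semilinear} boundary condition is only \emph{linear} in $u_{11}(0)$: the term $-G_{p_kp_l}u_{k\tau}u_{l\tau}$ that produced $\gamma_0u_{11}^2(0)$ in \eqref{3.48} vanishes because $G_{pp}=0$. Matching linear against quadratic yields only $M_2^+(\tau)\lesssim\epsilon M_2^2/\kappa$, which is not \eqref{pure tangential} and is useless in Theorem \ref{Th3.3}; "taking $M_2^2$ to the other side" cannot repair this. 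The whole point of the structural hypotheses \eqref{A for monotone case}, \eqref{special B} near $\partial\Omega$ is to make the interior error \emph{linear} in $M_2$: with $A$ affine in $p$ the term $F^{ij}A_{ij}^{kl}u_{1k}u_{1l}$ in \eqref{twice diff} vanishes identically, convexity of $B_1$ disposes of $D_{pp}B$, and the surviving terms $F^{ij}(D_zA_{ij})u_{11}+2F^{ij}(\tilde D_{x_1}A^k_{ij})u_{1k}+(D_zB_0)u_{11}+\cdots$ are bounded below by $-(CM_2+C^\prime)\mathscr T$ using F5. The barrier coefficient can then be taken of order $M_2$, every term in the chain stays linear in $M_2$, and one concludes $\varphi_z(0,u(0))\,u_{11}(0)\le CM_2+C^\prime$ after the $O(1)$ coefficients coming from $D\beta^\prime$ and $D\nu$ multiplying $u_{11}(0)$ are absorbed into $CM_2$. (The truncation $\chi$ of \eqref{chi} is also not needed: a quadratic correction $1-2\nu_1\beta_1+C_0\nu_1^2\ge 1/2$ suffices.)

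The second gap is case (ii): there the uniform $(\Gamma,A)$-convexity and F4 are \emph{not} assumed, so the strict barrier $\phi$ satisfying \eqref{barrier} on which your maximum-principle geometry rests does not exist. The argument must be inverted. From \eqref{mixed nonlinear G} and the twice-differentiated boundary condition one gets $D_\beta(v_1-f)(0)\ge -(CM_2+C^\prime)$, so that $v=v_1-f-(CM_2+C^\prime)\phi$, with $\phi$ an ordinary (negative) defining function, satisfies $D_\beta v(0)>0$ and therefore attains its maximum at an \emph{interior} point $x^0$; there $0\ge\mathcal Lv(x^0)\ge\kappa\mathscr T u_{11}-(CM_2+C^\prime)\mathscr T$ and F5 give $u_{11}(x^0)\le(CM_2+C^\prime)/\kappa$ directly, and the form of $f$ transfers this back to $u_{11}(0)$. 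You correctly identify $F^{ij}(D_zA_{ij})u_{11}\ge\kappa\mathscr Tu_{11}$ as the mechanism, but the maximum is pushed \emph{into} the domain here, not to the boundary.
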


\begin{proof}
In this proof, we will use $C$ and $C^\prime$ to denote constants depending on the same quantities as in the statement of Lemma \ref{Lemma 3.2}. As usual, the constants $C$ and $C^\prime$ will change from line to line.

(i). The proof follows the proof of Lemma \ref{Lemma 3.1} for the nonlinear $\mathcal{G}$ case.
Note that we have already normalised $\mathcal{G}$ here such that $\beta\cdot \nu =1$ on $\partial\Omega$.
We suppose that the function $v_\tau=u_{\tau\tau}+\frac{1}{2}|u_\tau|^2$ attains its maximum over $\partial\Omega$ and tangential vectors $\tau$ satisfying $|\tau|\le 1$, at a point $x_0\in \partial\Omega$ and a vector $\tau=\tau_0$. Without loss of generality, we may assume $x_0=0$, $\tau_0=e_1$ and $\nu(0)=e_n$. Since $v_\tau$ at $0$ attains its maximum in the direction $e_1$, we can assume $\{u_{ij}+\frac{1}{2}u_iu_j\}_{i,j<n}$ is diagonal at $0$ by a rotation of the $e_2, \cdots, e_{n-1}$ coordinates.
For $\beta=\beta(x)$ with $\beta\cdot \nu =1$, setting $b$, $\tau$ as in \eqref{b}, (now $b=\nu_1$, $\tau=e_1-\nu_1\beta$), we have the same decomposition for $v_1$ on $\partial\Omega$ as in \eqref{decomposition} with $c_1=1$. By the boundary condition $D_\beta u = \varphi(x,u)$ and its differentiation in the direction of $\tau$, we have on $\partial\Omega$,
\begin{equation}\label{lm 3.2 0}
  \begin{array}{ll}
        \!\!&\!\!\displaystyle \nu_1(2u_{\beta\tau}+u_\beta u_\tau) \\
     =  \!\!&\!\!\displaystyle \nu_1[2 (D_z\varphi) u_\tau + 2 D_{x_\tau}\varphi - 2 Du\cdot D_\tau \beta + \varphi u_\tau] \\
   \le  \!\!&\!\!\displaystyle \nu_1[h(0)\tau(0)\cdot Du + g(0)] + (C M_2 + C^\prime)|x|^2\\
   \le  \!\!&\!\!\displaystyle \nu_1[h(0)u_1 + g(0)] + (C M_2 + C^\prime)|x|^2,
  \end{array}
\end{equation}
where $g:=2D_{x_\tau}\varphi - 2Du\cdot D_\tau \beta$ and $h:=2D_z\varphi + \varphi$. Therefore, from \eqref{decomposition} with $c_1=1$ and \eqref{lm 3.2 0}, we have
\begin{equation}\label{lm 3.2 1}
\begin{array}{ll}
  v_1  \le \!\!&\!\!\displaystyle (1-2\nu_1\beta_1 + C_0 \nu_1^2)v_{1}(0) + h(0)\nu_1u_1 + g(0)\nu_1\\
      \!\!&\!\!\displaystyle  + (C M_2 + C^\prime)|x|^2:=f, \quad {\rm on} \ \partial\Omega,
\end{array}
\end{equation}
where the constant $C_0$ depends on $\Omega$ and $\beta$, and $f(0)=v_{1}(0)$. By making $C_0$ larger if necessary, we may assume that $1-2\nu_1\beta_1 + C_0 \nu_1^2 \ge 1/2$ in $\Omega$.
We now define an auxiliary function
\begin{equation}\label{aux in Lemma 3.2}
   v:= v_1 - f - c \phi,
\end{equation}
where $v_1=u_{11}+\frac{1}{2}|u_1|^2$, $f$ and $\phi$ are functions in \eqref{lm 3.2 1} and \eqref{barrier}, $c$ is a fixed constant to be chosen later. We consider the function $v$ in the boundary neighbourhood $\mathcal{N}=\Omega_\rho=\{x\in \Omega| \ d(x)<\rho\}$ with a small positive constant $\rho$. We assume that the function $v$ attains its maximum over $\bar \Omega_\rho$ at some point $x^0\in \Omega_\rho$. Then we have $\mathcal{L}v\le 0$ at $x^0$.

 Since $F$ satisfies F4, from the uniform $(\Gamma, A)$-convexity of $\Omega$, the barrier inequality \eqref{full barrier} holds provided $|D^2u|\ge C_\sigma$ if F5\textsuperscript{+} holds, while $\mathcal{L}=L$ satisfies \eqref{barrier} if $B_p=0$. As in the proof Lemma \ref{Lemma 3.1}, from the properties of $f$ and $\phi$, and the construction of $v$, we may assume $|D^2u(x^0)|\ge C_\sigma$ as large as we want. Actually, we can assume $u_{11}(x^0)>1$ as large as we want.

We now calculate $\mathcal{L}v$ in detail. Using F2 and convexity of $B$ in $p$, and taking $\tau=e_1$, in \eqref{twice diff}, we have
\begin{equation}\label{lm 3.2 3}
  \mathcal{L}u_{11} \ge F^{ij}[\tilde D_{x_1x_1}A_{ij}+ A_{ij}^{kl}u_{1k}u_{1l}+2(\tilde D_{x_1}A_{ij}^k)u_{1k}] + \tilde D_{x_1x_1}B + 2(\tilde D_{x_1}D_{p_k}B)u_{1k},
\end{equation}
in $\Omega$. Using \eqref{A for monotone case}, \eqref{special B} in $\mathcal{N}$ and F5, we then obtain from \eqref{lm 3.2 3},
\begin{equation}\label{lm 3.2 4}
\begin{array}{ll}
  \mathcal{L}u_{11} \!\!&\!\!\displaystyle \ge F^{ij}[D_zA_{ij}u_{11} +2(\tilde D_{x_1}A_{ij}^k)u_{1k}] + (D_zB^0) u_{11}
  - C^\prime (1+\mathscr{T})\\
                                    \!\!&\!\!\displaystyle \ge - (C M_2 + C^\prime) \mathscr{T}.
\end{array}
\end{equation}
Next, by direct calculation, we have
\begin{equation}
\begin{array}{ll}
\mathcal{L}(\nu_1 u_1) \!\!&\!\!\displaystyle  = \nu_1 \mathcal{L} u_1 + u_1 \mathcal{L}\nu_1 + 2F^{ij}\nu_{1i}u_{1j} \\
                       \!\!&\!\!\displaystyle  \le C^\prime \mathscr{T} + \epsilon F^{ij}u_{1i}u_{1j} + \frac{1}{\epsilon} F^{ij}\nu_{1i}\nu_{1j},
\end{array}
\end{equation}
for any constant $\epsilon>0$, where (2.26) in \cite{JT-oblique-I} and the Cauchy's inequality are used in the inequality. Consequently, taking $\epsilon=1/|h(0)|$, we have
\begin{equation}\label{L nu1u1}
  h(0) \mathcal{L}(\nu_1 u_1) \le C^\prime \mathscr{T} +  F^{ij}u_{1i}u_{1j}.
\end{equation}
By using \eqref{L nu1u1} and further calculation, we have
\begin{equation}\label{lm 3.2 6}
  \begin{array}{rl}
    \displaystyle \mathcal{L}(\frac{1}{2}|u_1|^2 -f)= \!\!&\!\! \displaystyle F^{ij}u_{1i}u_{1j} + u_1\mathcal{L}u_1 - \mathcal{L}f \\
   \ge\!\!&\!\! \displaystyle - (CM_2 + C^\prime) \mathscr{T}.
  \end{array}
\end{equation}
From \eqref{full barrier}, \eqref{lm 3.2 4} and \eqref{lm 3.2 6}, we have
\begin{equation}\label{contradiction lm 3.2}
\begin{array}{ll}
   0 \!\!&\!\!\displaystyle \ge \mathcal{L} v(x^0) =\mathcal{L}(v_1-f-c \phi)(x^0)\\
     \!\!&\!\!\displaystyle \ge \frac{c}{2}\sigma  \mathscr{T} - (CM_2 + C^\prime)\mathscr{T}\\
     \!\!&\!\!\displaystyle \ge  \mathscr{T} >0,
\end{array}
\end{equation}
by fixing the constant $c$ such that $c> 2(CM_2 + C^\prime+1)/\sigma$.
The contradiction in \eqref{contradiction lm 3.2} implies that $v$ attains its maximum on $\partial\Omega_\rho$. We can choose $c$ large such that $v<0$ on $\Omega\cap\partial\Omega_\rho$. By \eqref{lm 3.2 1}, we have $v\le 0$ on $\partial\Omega$ and $v=0$ at $0$. Therefore, $v$ attains its maximum in $\bar\Omega_\rho$ at the point $0\in \partial\Omega$. Thus, we have $D_\beta v(0)\le 0$, which now gives
\begin{equation}\label{u 110 le CM2}
   u_{11\beta}\le -2(D_\beta \nu_1)\beta_1u_{11}+CM_2 + C^\prime, \quad {\rm at} \  0.
\end{equation}

On the other hand, by tangentially differentiating $D_\beta u = \varphi(\cdot, u)$ twice, with $\beta = \beta(x)$ on $\partial\Omega$, we have
\begin{equation}\label{twice diff bdy semilinear case}
u_{\tau\tau\beta} = (D_z \varphi)u_{\tau\tau} - 2(D_\tau \beta_k)u_{k\tau} -[u_k D_{\tau\tau}\beta_k - D_{x_\tau x_\tau}\varphi - 2u_\tau D_{zx_\tau}\varphi -u_\tau^2 D_{zz}\varphi],
\end{equation}
on $\partial\Omega$, where $D_{x_\tau}=\tau\cdot D_{x}$. At the point $0\in \partial\Omega$, taking $\tau=e_1$ in \eqref{twice diff bdy semilinear case}, recalling that $\{u_{ij}+\frac{1}{2}u_iu_j\}_{i,j<n}$ is diagonal at $0$, we have
\begin{equation}\label{u 11beta ge semilinear case}
\begin{array}{ll}
u_{11\beta} \!\!&\!\! \ge  (D_z\varphi-2D_1\beta_1) u_{11} - 2\sum\limits_{k>1}(D_1\beta_k)u_{1k}- C^\prime\\
            \!\!&\!\! \ge  (D_z\varphi-2D_1\beta_1) u_{11} - 2(D_1\beta_n)u_{1n} + \sum\limits_{1<k<n}(D_1\beta_k)u_{1}u_{k}- C^\prime \\
            \!\!&\!\! \ge  (D_z\varphi-2D_1\beta_1) u_{11} - 2(D_1\beta_n)u_{1n} - C^\prime,
\end{array}
\end{equation}
at $0$.
By expressing $e_n$ in terms of $\beta(0)$ and the tangential components, we have $e_n=-\sum\limits_{k<n}\beta_ke_k + \beta$ at $0$. Since $\{u_{ij}+\frac{1}{2}u_iu_j\}_{i,j<n}$ is diagonal at $0$, we have
\begin{equation}\label{u 1n}
  u_{1n}=-\beta_1 u_{11}+\frac{1}{2}\sum\limits_{1<k<n}\beta_k u_{1} u_{k}+u_{1\beta},
\end{equation}
at $0$.
From \eqref{u 110 le CM2}, \eqref{u 11beta ge semilinear case} and \eqref{u 1n}, we obtain
\begin{equation}\label{[]u11(0)le}
[D_z\varphi-2D_1\beta_1 + 2(D_\beta \nu_1) \beta_1+2 (D_1\beta_n)\beta_1](0)u_{11}(0) \le CM_2+C^\prime.
\end{equation}
For the desired estimate \eqref{pure oblique}, we can rewrite \eqref{[]u11(0)le} in the form
\begin{equation}\label{rewritten[]}
D_z\varphi(0,u(0)) u_{11}(0) \le CM_2+C^\prime,
\end{equation}
where the terms $[-2D_1\beta_1 + 2(D_\beta \nu_1) \beta_1+2 (D_1\beta_n)\beta_1](0)u_{11}(0)$ in \eqref{[]u11(0)le} are absorbed into $CM_2$ on the right hand side of \eqref{rewritten[]}.
The estimate \eqref{pure oblique} now follows immediately from \eqref{rewritten[]}.

(ii). As in (i), we suppose that the function $v_\tau=u_{\tau\tau}+\frac{1}{2}|u_\tau|^2$ attains its maximum over $\partial\Omega$ and tangential vectors $\tau$ satisfying $|\tau|\le 1$, at a point $x_0\in \partial\Omega$ and a vector $\tau=\tau_0$. Without loss of generality, we may assume $x_0=0$, $\tau_0=e_1$ and $\nu(0)=e_n$. We can assume that $u_{11}(0)>0$. Following the analysis in (i), the inequalities \eqref{lm 3.2 0}, \eqref{lm 3.2 1} hold on $\partial\Omega$. Then we have
\begin{equation}
v_1-f \le 0, \quad {\rm on} \ \partial\Omega, \quad{\rm and} \ v_1-f=0, \ {\rm at} \ 0\in \partial\Omega,
\end{equation}
where $v_1$ and $f$ are functions defined in \eqref{lm 3.2 1}. From \eqref{u 11beta ge semilinear case}, we have
\begin{equation}\label{u 11beta ge semilinear case'}
u_{11\beta}(0) \ge -(CM_2 + C^\prime).
\end{equation}
Using \eqref{mixed nonlinear G} and \eqref{u 11beta ge semilinear case'}, we have
\begin{equation}
D_\beta (v_1-f)(0)\ge -(CM_2 + C^\prime).
\end{equation}
Then the function
\begin{equation}
v := v_1 - f - (CM_2+C^\prime)\phi
\end{equation}
satisfies $D_\beta v (0)>0$ for properly larger constants $C$ and $C^\prime$, where $\phi\in C^2(\bar \Omega)$ is a negative defining function for $\Omega$ satisfying $\phi=0$ on $\partial\Omega$ and $D_\nu \phi=-1$ on $\partial\Omega$. Therefore, $v$ must take its maximum over $\bar \Omega$ at an interior point in $\Omega$. We assume that $v$ takes its maximum at $x^0\in \Omega$. Then we have $\mathcal{L}v \le 0$, at $x^0$.
Under our assumptions for $F, A$ and $B$, by direct calculation as in (i), we have
\begin{equation}\label{L u11 c1u12}
\begin{array}{ll}
  \mathcal{L}v \!\!&\!\!\displaystyle \ge F^{ij}(D_zA_{ij})u_{11} - (CM_2+C^\prime) \mathscr{T} \\
               \!\!&\!\!\displaystyle \ge \kappa \mathscr{T}u_{11} - (CM_2+C^\prime) \mathscr{T}.
\end{array}
\end{equation}
Combining \eqref{L u11 c1u12} with $\mathcal{L}v(x^0)\le 0$, and using F5, we have
\begin{equation}
u_{11}(x^0) \le (CM_2+C^\prime)/\kappa.
\end{equation}
By the forms of $v$, $f$ and $\phi$, we can derive the desired pure tangential estimate \eqref{pure tangential} and complete the proof for (ii).
\end{proof}

\begin{remark}\label{Remark3.6}
The explicit form of the coefficient of $u_{11}(0)$ in the estimate \eqref{[]u11(0)le} permits some refinement of the constant
$\kappa$ in case (i).  In particular if $\Omega$ is convex with minimum and maximum boundary curvatures, $\kappa_ 1$ and $\kappa _{n-1}$  respectively and $|\beta^\prime|\le \alpha\kappa_1/\kappa_{n-1}$ for some constant  $\alpha <1$, then we can take  $\kappa = \min\limits_{\partial\Omega}[\varphi_z(\cdot,u) + 2(1-\alpha)\kappa_1]$. Note that without the normalisation
 $\beta\cdot \nu=1$ on $\partial\Omega$ we should divide $\beta$ by  $\beta\cdot \nu$.
\end{remark}

Combining the global estimate \eqref{global 2nd bound regular A} or \eqref{improved 2nd global estimate}, together with the boundary estimates \eqref{mixed nonlinear G}, \eqref{pure oblique} and \eqref{pure tangential} in the semilinear $\mathcal{G}$ case, we can obtain the full second derivative estimate under strong monotonicity condition on either $G$ or $A$ with respect to $z$. For this purpose we will employ the following refinement of \eqref{pure oblique}
\begin{equation}\label{pure oblique improved}
\sup_{\partial\Omega}u_{\beta\beta}\le \epsilon M_2 + C_\epsilon M_2^\prime + C_\epsilon^\prime,
\end{equation}
for any $\epsilon>0$,  where $C_\epsilon$ is a constant depending on $\epsilon$ and $\beta$ and $C_\epsilon^\prime$ is a  constant depending on $\epsilon, F, A, B, G, \Omega$ and  $|u|_{1;\Omega}$. To get \eqref{pure oblique improved} from the proof of Lemma 2.2 in \cite{JT-oblique-I}, we have in (2.29) of \cite{JT-oblique-I}
$$F^{ij}\beta_{ik}u_{jk} = 2F^{ij} (D_i\beta_k)u_{jk} -(D_z\varphi) F^{ij}u_{ij} $$
so that by using the inequalities (1.9) and (1.10) in \cite{JT-oblique-I} to estimate $F^{ij}u_{ij}$, (see also (3.24) in
\cite{JT-oblique-I} and \eqref{homogeneity}  in this paper), the constant $C$ in (2.29) of  \cite{JT-oblique-I} need only depend
on $\beta$   provided instead the constant $C_{\epsilon_1}$  is allowed to depend on
$\epsilon_1, F, A, B, G, \Omega$ and $|u|_{1;\Omega}$. By examining the rest of the proof of Lemma 2.2 in \cite{JT-oblique-I}, we then obtain \eqref{pure oblique improved}. Moreover we can take $C_\epsilon =  \frac{C}{\epsilon}$ for some constant $C$ depending on $F, A, B, G, \Omega$ and $\beta_0$. Also to avoid dependence on $|u|_{1;\Omega}$ in global estimate \eqref{global 2nd bound regular A}, we will use the sharper form in \eqref{improved 2nd global estimate} in Remark \ref{Remark2.1}, where the coefficient of $\sup_{\partial\Omega}|D^2u|$ is $1$. From the above considerations, we now have the following full second derivative estimate for semilinear $\mathcal{G}$. As in Theorem 1.2 in \cite{JT-oblique-I}, we need to assume the cone $\Gamma$ lies strictly in a half space in the sense that $r\le {\rm trace}(r)I$ for all $r\in \Gamma$. For simplicity we also assume that $\varphi, A$ and $B$ are non-decreasing in $z$, which implies that the constants $C$ in Lemma \ref{Lemma 3.2} depend only on $F, \Omega, \beta^\prime$ and $\tilde A$.

\begin{Theorem}\label{Th3.3}
Let $u\in C^4(\Omega)\cap C^{2}(\bar \Omega)$ be an admissible solution of the boundary value problem \eqref{1.1}-\eqref{1.2} in a $C^{3,1}$ domain $\Omega\subset\mathbb{R}^n$. Assume that $F$ satisfies conditions F1-F5, $A\in C^2(\bar \Omega\times \mathbb{R}\times \mathbb{R}^n)$ satisfies \eqref{A for monotone case} with constant $\tilde A$ in $\Omega$, $B>a_0,\in C^2(\bar \Omega\times \mathbb{R})$ satisfies \eqref{special B} and is convex in $p$, $G\in C^{2}(\partial\Omega\times\mathbb{R}\times\mathbb{R}^n)$ is oblique and semilinear, $A, B$ and $\varphi$ are nondecreasing in $z$, $\bar u\in C^2(\Omega)\cap C^1(\bar \Omega)$ is admissible with respect to $u$ and $\Omega$ is uniformly $(\Gamma, A)$-convex.  Assume either F5\textsuperscript{+} holds  or $\bar u$ satisfies the subsolution condition \eqref{subsolution} and $B$ is independent of $p$. Then there exist constants $K$ depending on $F, \Omega, \beta$ and $\tilde A$ such that if either $\varphi_z(\cdot,u) \ge K$ on $\partial\Omega$ or $A_z(\cdot,u) \ge K I$  in $\Omega$,  we have the estimate \eqref{2nd derivative bd for nonlinear G}, where $C$ is a constant depending on $F, A, B, G, \Omega, \bar u$ and $|u|_{1;\Omega}$.
\end{Theorem}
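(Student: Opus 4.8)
The plan is to combine the sharper global second derivative estimate of Remark~\ref{Remark3.1} with the three boundary estimates already in hand, and then absorb, having first chosen the monotonicity threshold $K$ so large that the resulting boundary bound is $\epsilon M_2+C_\epsilon$ for arbitrarily small $\epsilon$. First I would reduce to the boundary. Since $A$ in \eqref{A for monotone case} is affine in $p$, its $p$-Hessian $A^{kl}_{ij}=D^2_{p_kp_l}A_{ij}$ vanishes identically, and $B$ has the form \eqref{special B}; hence the orthogonal-invariance-free argument of Remark~\ref{Remark3.1} applies verbatim — the term $F^{ij}A^{kl}_{ij}u_{\xi k}u_{\xi l}$ in \eqref{inequal in Rm2.1} is simply zero, while the $x$-dependence of $\tilde A$ and of the lower order coefficients is absorbed into the $-C(1+\mathscr T)$ term — the barrier $\eta=e^{K(\bar u-u)}$ from Lemma~\ref{Lemma 2.1} being available under the stated alternative (F5\textsuperscript{+}, or $\bar u$ a subsolution with $B_p=0$, using that $\bar u$ is admissible with respect to $u$). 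This yields \eqref{improved 2nd global estimate}, namely
\[ M_2:=\sup_\Omega|D^2u|\le \sup_{\partial\Omega}|D^2u|+C, \]
with coefficient exactly $1$ in front of the boundary term, which is essential for the final absorption.

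Next I would assemble the boundary ingredients: the mixed tangential–oblique estimate \eqref{mixed nonlinear G}, $|u_{\tau\beta}|\le C$; the refined pure oblique estimate \eqref{pure oblique improved}, $\sup_{\partial\Omega}u_{\beta\beta}\le \epsilon M_2+\tfrac C\epsilon M_2'+C'_\epsilon$ for every $\epsilon>0$ with $C$ structural (both using the uniform $(\Gamma,A)$-convexity of $\Omega$); and, decisively, the pure tangential estimate \eqref{pure tangential} of Lemma~\ref{Lemma 3.2}, in case (i) ($\varphi_z\ge\kappa$ on $\partial\Omega$) or case (ii) ($A_z\ge\kappa I$ in $\Omega$), namely $M_2^+(\tau)\le (CM_2+C')/\kappa$, where — precisely because $A,B,\varphi$ are assumed nondecreasing in $z$ — the constant $C$ depends only on $F,\Omega,\beta'$ and $\tilde A$, so the threshold $K$ fixed below is independent of $|u|_{1;\Omega}$. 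To pass from these one-sided bounds to two-sided control of $D^2u$ on $\partial\Omega$ I would use the hypothesis that $\Gamma$ lies in the half space $\{r\le\operatorname{tr}(r)I\}$: for $w=D^2u-A\in\Gamma$ this forces $\operatorname{tr}(w)\ge0$ and $\lambda_i(w)\le\operatorname{tr}(w)$, whence $|D^2u|\le C(1+\lambda_{\max}(D^2u))$ and $u_{\xi\xi}\ge-C(1+\lambda_{\max}(D^2u))$ pointwise; and on $\partial\Omega$, after dividing $G$ by $\beta\cdot\nu$ so that $\beta\cdot\nu=1$ and $\beta'=\beta-\nu$ is tangential, I decompose a unit vector as $\xi=\tau'+c\beta$ with $|\tau'|,|c|\le C$, so that $\lambda_{\max}(D^2u)=u_{\tau'\tau'}+2c\,u_{\tau'\beta}+c^2u_{\beta\beta}$ at the point realizing the supremum.

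Then I would combine: fixing $K$ (depending only on $F,\Omega,\beta',\tilde A$) so large that $(CM_2+C')/\kappa\le \epsilon M_2+C$ for a chosen small $\epsilon$, the tangential estimate together with \eqref{mixed nonlinear G} and \eqref{pure oblique improved} controls $\sup_{\partial\Omega}\lambda_{\max}(D^2u)$, hence by the cone inequality $\sup_{\partial\Omega}|D^2u|$, by $\epsilon'M_2+C_{\epsilon'}$; substituting into the global estimate and taking $\epsilon'$ small gives $M_2\le\epsilon'M_2+C$, i.e. the asserted bound \eqref{2nd derivative bd for nonlinear G}. As usual the argument is run separately under F5\textsuperscript{+} and under the ``$B_p=0$, $\bar u$ a subsolution'' alternative, since both the global and the boundary estimates are stated in that dichotomy.

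The hard part will be the bookkeeping of the constants in the combination step. Because $C_\epsilon=C/\epsilon$ in \eqref{pure oblique improved} blows up as $\epsilon\to0$, the order of the choices — first $\kappa$ (hence $K$), only afterwards $\epsilon$ — is essential; and I must control the lower part of $M_2'$, that is, the lower bound for the pure tangential second derivatives on $\partial\Omega$, through the cone condition (no barrier argument is available there, since the concavity of $F$ has the wrong sign at minima of pure second derivatives), in such a way that once $\kappa$ has been fixed the net coefficient of $M_2$ produced on the right-hand side is strictly less than $1$, allowing the absorption into the left-hand side.
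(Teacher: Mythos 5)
Your proposal is correct and follows essentially the same route as the paper's proof: the coefficient-one global estimate \eqref{improved 2nd global estimate} from Remark \ref{Remark3.1} (valid here since $A$ is affine in $p$ and $B$ has the form \eqref{special B}), combined with \eqref{mixed nonlinear G}, \eqref{pure oblique improved} and \eqref{pure tangential}, the half-space condition on $\Gamma$ to pass from $M_2^+$ to $M_2^\prime$, and the choice of $K$ large relative to structural constants so that the boundary bound has net coefficient below one. The only cosmetic difference is that you control $\sup_{\partial\Omega}\lambda_{\max}(D^2u)$ via the decomposition $\xi=\tau'+c\beta$, whereas the paper writes $u_{\nu\nu}=u_{\beta\beta}-2u_{\beta\beta^\prime}+u_{\beta^\prime\beta^\prime}$ and then invokes F2; both are standard and equivalent here.
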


\begin{proof}
As in Lemma \ref{Lemma 3.2}, we assume $\beta\cdot\nu = 1$ on $\partial\Omega$. By the assumption of the cone $\Gamma$, the quantities $M_2^\prime$ and $M_2^+$ are equivalent. From \eqref{pure tangential} in Lemma \ref{Lemma 3.2}, and taking $K\ge 1$, we have
\begin{equation}\label{M2'}
M_2^\prime \le \frac{C_0 M_2}{K} +C^\prime,
\end{equation}
with $C_0$ depending on $F,\Omega, \beta^\prime$ and  $\tilde A$. Substituting \eqref{M2'} into \eqref{pure oblique improved}, we have
\begin{equation}\label{u betabeta}
   \sup_{\partial\Omega}u_{\beta\beta} \le (\epsilon + \frac{C_1}{\epsilon K})M_2 + C_\epsilon^\prime,
\end{equation}
for any $\epsilon\in(0,1)$, for a further constant  $C_1\ge C_0$ depends on the same quantities as $C_0$.  Consequently, from
 \eqref{mixed nonlinear G}, \eqref{M2'} and \eqref{u betabeta} we have
 \begin{equation} \label{double normal}
  \begin{array}{ll}
  u_{\nu\nu} \!\!&\!\!\displaystyle = u_{\beta\beta} - 2u_{\beta\beta^\prime} + u_{\beta^\prime\beta^\prime} \\
                     \!\!&\!\!\displaystyle  \le (\epsilon + \frac{C_1}{\epsilon K})M_2 + C_\epsilon^\prime,
   \end{array}
\end{equation}
for further constants $C_1$ and $C_\epsilon^\prime$ depending on the same quantities.  Fixing  $K = C_1\epsilon^{-2}$,
 $\epsilon =  \frac{1}{4}$, and using the concavity F2 we thus obtain, from \eqref{M2'} and \eqref{double normal},
 \begin{equation}\label{boundary estimate}
\sup_{\partial\Omega}|D^2 u| \le \frac{1}{2} M_2 + C^\prime.
\end{equation}
Combining \eqref{boundary estimate} with \eqref{global 2nd bound regular A}, we obtain the estimate \eqref{2nd derivative bd for nonlinear G}.
 \end{proof}

\begin{remark}\label{Remark3.7}
When F6 is satisfied, the pure second order oblique derivative estimate in the simpler form \eqref{upper bound for double oblique} holds and the proofs of Lemmas \ref{Lemma 3.2} and Theorem \ref{Th3.3} become simpler. We can just take $v_1=u_{11}$ in the proof of Lemma \ref{Lemma 3.2} and the estimate \eqref{2nd derivative bd for nonlinear G} can be directly obtained without the assumption of the cone $\Gamma$.
\end{remark}

\begin{remark}\label{Remark3.8}
In the special case $\tilde A = 0$, which includes the standard Hessian equations, the $(\Gamma,A)$-convexity condition becomes simply $\Gamma$-convexity and the existence of an admissible function $\bar u$ is not needed in the hypotheses of Theorem \ref{Th3.3} as the function $|x|^2$ serves the purpose, see Remark \ref{Remark3.1}. In particular for a linear boundary condition,  $D_\beta u = \gamma u + \varphi_0$, we would obtain a global second derivative bound if $\gamma \ge K$ for some constant $K$ depending on $F, \Omega$ and $\beta$, thereby extending the special case of the Monge-Amp\`ere equation in \cite{Urbas1987,Wang1992} to general Hessian equations.
\end{remark}

\section{Existence Theorems}\label{Section 4}

In this section, we shall present  some existence theorems for admissible solutions to the boundary value problem \eqref{1.1}-\eqref{1.2}. In accordance with our treatment  of the second derivative estimates in Theorems \ref{Th3.2} and \ref{Th3.3}, the situations for the nonlinear $\mathcal{G}$ case and semilinear $\mathcal{G}$ case will be discussed separately.

With the second derivative estimates in Section \ref{Section 3}, in order to establish the existence results, we also need   solution  and gradient estimates. The necessary higher order derivative estimates follow from the global H\"older estimates for second derivatives in \cite{LieTru1986}  and \cite{Tru1984} and the linear Schauder theory in \cite{GTbook}. For the solution estimates, we can assume the existence of admissible subsolutions and supersolutions of the problem \eqref{1.1}-\eqref{1.2} or alternative conditions such as in Remark 4.1 in \cite{JT-oblique-I}. Various local and global gradient estimates for the oblique boundary problem are established in Section 3 in \cite{JT-oblique-I}, which we will expand here to fit the situations in this paper.

First, we consider global gradient estimate for admissible solutions of the problem \eqref{1.1}-\eqref{1.2} when $\Gamma=K^+$, $A$ satisfies a quadratic bound from below and $G$ is tangentially  concave in $p$.  For these estimates and their subsequent application to existence, it will be convenient to express the obliqueness condition in the  form,
\begin{equation}\label{uniform obliqueness}
G_p(x,z,p)\cdot \nu \ge \beta_0,
\end{equation}
for all $x\in \Omega$, $|z|\le M$, $M \in \mathbb{R}^+$, $p\in \mathbb{R}^n$, where now $\beta_0$ is a positive constant depending on $M$. Similarly we will consider the uniform tangential concavity  of $G$ in the form,
\begin{equation}\label{uniform tangential concavity}
G_{p_kp_l}(x, z, p)\tau_k\tau_l \le - \gamma_0 |\tau|^2,
\end{equation}
for all tangential vectors $\tau$, $x\in\partial \Omega$, $|z|\le M$, $M \in \mathbb{R}^+$, $p\in \mathbb{R}^n$, where  $\gamma_0$ is a positive constant depending on $M$.

\begin{Lemma}\label{Lemma 4.1}
Let $\Omega\subset \mathbb{R}^n$ be a bounded $C^2$ domain, $u\in C^2(\Omega)\cap C^1(\bar\Omega)$ satisfying
\begin{equation}\label{u lower bound}
   D^2u\ge -\mu_0(1+|Du|^2)I,\quad {\rm in} \ \Omega,
\end{equation}
and the boundary condition \eqref{1.2}, where $\mu_0$ is a non-negative constant and $G$ is oblique and tangentially uniformly concave in $p$ satisfying \eqref{uniform obliqueness} and \eqref{uniform tangential concavity}.
Then we have the estimate
\begin{equation}\label{gradient bound in Lemma 4.1}
   \sup_{\Omega}|Du|\le C,
\end{equation}
where $C$ depends on $\mu_0, G, \Omega$ and $|u|_{0;\Omega}$. If we replace \eqref{u lower bound} by the stronger condition,
\begin{equation}\label {stronger quadratic}
D_{ij} u\xi_i \xi_j \ge - \mu_0(1 + |D_\xi u|^2)
\end{equation}
for  any unit vector $\xi$, then we need only assume $G$ is tangentially concave in $p$.
\end{Lemma}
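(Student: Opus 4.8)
The plan is to bound $\sup_{\bar\Omega}|Du|$ by locating where the continuous function $w=|Du|^2$ attains its maximum over $\bar\Omega$, say at a point $x_0$; it then suffices to bound $|Du(x_0)|$, and I would split into the cases $x_0\in\Omega$ and $x_0\in\partial\Omega$.

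\emph{Interior case.} Suppose $x_0\in\Omega$, and (otherwise there is nothing to prove) $Du(x_0)\neq 0$; set $\xi=Du(x_0)/|Du(x_0)|$ and follow the line through $x_0$ in the gradient direction, writing $g(t)=D_\xi u(x_0+t\xi)$. Since $x_0$ is a maximum of $w$, $g(t)\le|Du(x_0+t\xi)|\le|Du(x_0)|=g(0)$, so $g$ is maximised at $t=0$. Here the strengthened hypothesis \eqref{stronger quadratic} enters decisively: it gives $g'(t)=u_{\xi\xi}(x_0+t\xi)\ge-\mu_0(1+g(t)^2)$, that is $\frac{d}{dt}\arctan g(t)\ge-\mu_0$ — an estimate which is \emph{not} available from the weaker bound \eqref{u lower bound}, since the latter only controls $u_{\xi\xi}$ by $\mu_0(1+|Du|^2)$ rather than by $\mu_0(1+u_\xi^2)$. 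Integrating this one–dimensional differential inequality along the line until it exits $\Omega$, at distance $d\le\mathrm{diam}\,\Omega$, I obtain, according to whether $\arctan g$ is still positive at the exit point: either a comparison of $\arctan|Du(x_0)|$ with the gradient at the boundary exit point (using $g(d)\le\sup_{\partial\Omega}|Du|$), or, when the line would ``overshoot'', an estimate $\mathrm{osc}_\Omega u\ge\frac{1}{2\mu_0}\log(1+|Du(x_0)|^2)-C$ obtained by integrating $g$ itself over the interval on which $\arctan g\ge 0$. In either event $|Du(x_0)|\le C\big(1+\sup_{\partial\Omega}|Du|\big)$ with $C=C(\mu_0,\mathrm{diam}\,\Omega,|u|_{0;\Omega})$, so the interior case is reduced to a bound on the boundary. (This is exactly the step that needs only $u\in C^2(\Omega)$ and no equation; with \eqref{u lower bound} one instead has to run a maximum–principle argument on $\log w$ plus a barrier, which is why the general statement demands the uniform tangential concavity \eqref{uniform tangential concavity}.)

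\emph{Boundary case.} Suppose now $x_0\in\partial\Omega$. By \eqref{uniform obliqueness} the vector $\beta=G_p(\cdot,u,Du)$ points into $\Omega$ at $x_0$, so maximality of $w$ over $\bar\Omega$ forces the tangential derivatives of $w$ to vanish at $x_0$ together with $D_\beta w(x_0)\le 0$; equivalently $D^2u(x_0)\,Du(x_0)$ is a non‑positive multiple of the inner normal $\nu(x_0)$. I would then use the boundary condition: the case $Du(x_0)\parallel\nu(x_0)$ is elementary, since $t\mapsto G(x_0,u(x_0),t\nu)$ is increasing by obliqueness and hence has at most one zero, bounded by $|u|_{0;\Omega}$, $G$ and $\beta_0$; in the remaining case, differentiating $G(\cdot,u,Du)=0$ along $\partial\Omega$ controls the tangential–tangential and tangential–oblique second derivatives of $u$ at $x_0$ up to $O(1+|Du|)$, and one closes the resulting inequality against the term forced by $D^2u(x_0)\,Du(x_0)\parallel\nu$ and the lower bound \eqref{stronger quadratic} applied in the gradient's tangential component, the tangential concavity of $G$ supplying the favourable sign of the quadratic form $G_{p_kp_l}u_{k\tau}u_{l\tau}$ — the one‑derivative‑lower analogue of the boundary second–derivative analysis of Section~\ref{Subsection 3.2}. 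For the only‑$C^1(\bar\Omega)$ regularity one performs these computations on $\Omega_\rho$ and passes to the limit, or first proves the estimate for $C^2(\bar\Omega)$ functions and then approximates. Combining the two cases yields \eqref{gradient bound in Lemma 4.1}.

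\emph{Main obstacle.} The boundary case is the crux, and its subtlety is that tangential concavity of $G$ by itself does \emph{not} bound $|p|$ on a level set $\{G(x,z,\cdot)=0\}$ — the Urbas‑type operator $G=p\cdot\nu-|p^T|$ of Remark~\ref{Remark3.4} is oblique and tangentially concave yet has $p^T$ unbounded on $\{G=0\}$ — so the estimate must genuinely exploit the extra rigidity carried jointly by the maximality of $w$ and by the structure condition on $u$; it is precisely because the strengthened condition \eqref{stronger quadratic} reduces the interior step to a purely one–dimensional comparison, requiring no equation or maximum principle, that the boundary step can be completed under mere tangential concavity of $G$ rather than the uniform tangential concavity \eqref{uniform tangential concavity} needed in the general case.
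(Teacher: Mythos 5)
There is a genuine gap: the boundary case, which you yourself identify as the crux, is never actually carried out, and the idea that closes it in the paper is missing from your sketch. The paper does not differentiate the boundary condition at all (so no $C^2(\bar\Omega)$ issue arises and no second derivatives of $u$ enter). Instead it Taylor-expands $G(x,u,\cdot)$ in $p$ about $p=0$ on the identity $G(\cdot,u,Du)=0$, splitting $Du$ into its tangential part $\delta u$ and normal part $(D_\nu u)\nu$: this yields
$0 = G(\cdot,u,0) + [G_{p_i}(\cdot,u,p^*)\nu_i]D_\nu u + G_{p_i}(\cdot,u,0)\delta_i u + \tfrac12 G_{p_ip_j}(\cdot,u,p^{**})\delta_iu\,\delta_ju$
with $p^{**}$ a multiple of $\delta u$, so the uniform tangential concavity \eqref{uniform tangential concavity} makes the last term $\le -\tfrac{\gamma_0}{2}|\delta u|^2$, and together with obliqueness one gets $D_\nu u \ge -C$ whenever $D_\nu u\le 0$. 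Note this is only a \emph{one-sided} bound on the \emph{normal} derivative, not a bound on $|Du|$ at the boundary — which is precisely how the paper sidesteps the obstruction you correctly point out (that $\{G(x,z,\cdot)=0\}$ need not be a bounded set). The full gradient bound is then obtained by citing the standard conversion (Lemma 3.2 of \cite{JTX2015}, resp.\ Theorem 2.2 of \cite{LTU1986}) of such a one-sided boundary inequality into $\sup_\Omega|Du|\le C$ under the semi-convexity \eqref{u lower bound} (resp.\ \eqref{stronger quadratic}, which makes $e^{\kappa u}$ semi-convex and allows the weaker oblique one-sided bound $D_\nu u + \beta_0^{-1}G_{p_i}(\cdot,u,0)\delta_iu\ge -C$ coming from mere tangential concavity). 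Your attempt to close the boundary case by differentiating $G$ along $\partial\Omega$ and playing second derivatives of $u$ against $G_{p_kp_l}u_{k\tau}u_{l\tau}$ is not completed and would in any case require regularity and information the lemma does not provide.

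A second, related defect: your interior reduction is only argued under the \emph{stronger} condition \eqref{stronger quadratic}, yet the main assertion of the lemma assumes only \eqref{u lower bound}. You defer that case to ``a maximum-principle argument on $\log w$ plus a barrier'', but the lemma assumes no equation for $u$, so there is no elliptic operator with which to run a maximum principle; and you misattribute the role of the uniform tangential concavity, which compensates at the \emph{boundary} (it lets one bound $D_\nu u$ alone rather than an oblique combination), not in the interior. The interior step in both cases is exactly the content of the cited lemmas from \cite{JTX2015} and \cite{LTU1986}, which work from the Hessian lower bound and the one-sided boundary inequality alone.
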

\begin{proof}
By two applications of the Taylor's expansion and \eqref {1.2}, we have
\begin{equation}\label{Taylor expansion}
0 = G(\cdot,u,0) + [G_{p_i}( \cdot,u,p^*)\nu_i] D_\nu u + G_{p_i}(\cdot,u,0)\delta_i u +  \frac{1}{2}G_{p_ip_j} (\cdot,u, p^{**}) \delta_i u\delta_j u, \quad {\rm on} \ \partial\Omega,
\end{equation}
for some fixed vector functions $p^*$ and $p^{**}$. Specifically we can take $p^* = \delta u + t^*(D_\nu u)\nu$ and
$p^{**} = t^{**}\delta u$ where $t^*$ and $t^{**}$ are scalar functions on $\partial\Omega$ satisfying $0\le t^*, t^{**} \le 1$. From \eqref{uniform obliqueness} and \eqref{uniform tangential concavity}, we then obtain
\begin{equation}
D_\nu u \ge \frac{1}{\beta_0} ( - |G(\cdot,u,0)| - |G_p(\cdot,u,0)||\delta u| + \frac{1}{2}\gamma_0|\delta u|^2) \ge -C
\end{equation}
if $D_\nu u \le 0$, by Cauchy's inequality. The gradient estimate \eqref{gradient bound in Lemma 4.1} then follows immediately from Lemma 3.2 in \cite{JTX2015}. If we only assume $G$ is tangentially concave, we obtain from \eqref{Taylor expansion},
\begin{equation}
D_\nu u + \frac{1}{\beta_0} G_{p_i}(\cdot,u,0)\delta_i u \ge - \frac{1}{\beta_0}|G(\cdot,u,0)| \ge -C
\end{equation}
so that if \eqref {stronger quadratic} holds we obtain \eqref {gradient bound in Lemma 4.1} from Theorem 2.2 in \cite{LTU1986}
as condition \eqref {stronger quadratic} implies the function $e^{\kappa u}$ is semi-convex for large $\kappa$; see also Lemma 4.1 in \cite{JTX2015}.
\end{proof}
Lemma \ref{Lemma 4.1} provides a gradient estimate for $u$ satisfying \eqref{u lower bound} and \eqref{1.2} with nonlinear and tangentially uniformly concave $G$, which is a counterpart for the Dirichlet boundary condition case in \cite{JTY2013} and the semilinear Neumann and oblique boundary condition cases in \cite{JTX2015}. For $\Gamma=K^+$, an admissible $u\in C^2(\Omega)\cap C^1(\bar \Omega)$ satisfies $M[u]\in K^+$. When we assume a quadratic bound for $A$ from below, namely,
\begin{equation}\label{A lower quadratic}
A(x,z,p)\ge -\mu_0(1+|p|^2)I,
\end{equation}
for all $x\in \Omega$, $|z|\le M$, $p\in \mathbb{R}^n$ with $M\in \mathbb{R}^+$ and some non-negative constant $\mu_0$ depending on $M$. Then the admissibility in $K^+$ and the condition \eqref{A lower quadratic} imply that $u$ satisfies the inequality \eqref{u lower bound}. Therefore, we can apply the gradient estimate in Lemma \ref{Lemma 4.1} to the admissible solutions of the problem \eqref{1.1}-\eqref{1.2} in $K^+$ for $A$ in \eqref{A lower quadratic} and tangentially uniformly concave $G$ in $p$. As in \cite{JT-oblique-I}, we can also use $A\ge O(|p|^2)I$ as $|p|\rightarrow \infty$ to simply denote the condition \eqref{A lower quadratic}. Also under the stronger condition
\begin{equation}\label{A stronger quadratic}
A_{ij}(x,z,p)\xi_i\xi_j \ge -\mu_0(1+|p\cdot\xi|^2),
\end{equation}
for any unit vector $\xi$, we need only assume $G$ is tangentially concave in $p$.

For the maximum modulus estimate of the solution, we will assume the existence of an admissible subsolution $\underline u\in C^2(\Omega)\cap C^1(\bar\Omega)$ and a supersolution $\bar u\in C^2(\Omega)\cap C^1(\bar \Omega)$ of the oblique boundary value problem \eqref{1.1}-\eqref{1.2} in the sense of
\begin{equation}\label{sub-}
F(M[\underline u]) \ge B(\cdot,\underline u, D\underline u), \ {\rm in} \ \Omega, \quad G(\cdot, \underline u, D\underline u)\ge 0, \ {\rm on} \ \partial\Omega,
\end{equation}
and
\begin{equation}\label{sup-}
F(M[\bar u]) \le B(\cdot,\bar u, D\bar u), \ {\rm in} \ \Omega, \quad G(\cdot, \bar u, D\bar u)\le 0, \ {\rm on} \ \partial\Omega,
\end{equation}
respectively. If we assume that $A, B$ and $G$ are non-decreasing with respect to $z$, with at least one of them strictly increasing, by the comparison principle, an admissible solution $u\in C^2(\Omega)\cap C^1(\bar \Omega)$ of the problem \eqref{1.1}-\eqref{1.2} satisfies $\underline u \le u \le \bar u$ in $\bar\Omega$, see Section 4.1 in \cite{JT-oblique-I}.

Combining the solution bound from the subsolution and supersolution, gradient bound in Lemma \ref{Lemma 4.1}, and the second derivative bound in Theorem \ref{Th3.3}, we can now formulate an existence theorem for admissible solutions for regular $A$ and locally uniformly concave $G$ when $\Gamma=K^+$.
\begin{Theorem}\label{Th4.1}
Assume that $\mathcal{F}$ is orthogonally invariant and satisfies F1-F4 and F6 in the positive cone $K^+$, $\Omega$ is a $C^{3,1}$ bounded domain in $\mathbb{R}^n$, $A\in C^2(\bar \Omega \times \mathbb{R}\times \mathbb{R}^n)$ is regular in $\bar \Omega$ with $D_{pp}A = 0$ in $\partial\Omega\times\mathbb{R}\times \mathbb{R}^n$, $B>a_0, \in C^2(\bar \Omega\times \mathbb{R}\times \mathbb{R}^n)$ is convex in $p$, $G\in C^{2,1}(\partial\Omega\times \mathbb{R}\times \mathbb{R}^n)$ is uniformly oblique satisfying \eqref{uniform obliqueness} and locally tangentially uniformly concave in $p$, $\underline u$ and $\bar u, \in C^2(\Omega)\cap C^1(\bar \Omega)$ are respectively an admissible subsolution and a supersolution of the oblique boundary value problem \eqref{1.1}-\eqref{1.2} in the sense of \eqref{sub-} and \eqref{sup-}, with $\bar u \le u_0$, where $u_0\in C^2(\bar\Omega)$ is  also admissible and  $\Omega$ is uniformly $A$-convex with respect to the interval $\mathcal{I}=[\underline u, \bar u]$. Assume also that, $A$, $B$ and $-G$ are non-decreasing in $z$, with at least one of them increasing, either $A$ satisfies \eqref{A lower quadratic} and $G$ is tangentially uniformly concave satisfying \eqref{uniform tangential concavity} or A satisfies \eqref{A stronger quadratic} and either {\rm (i)} F5\textsuperscript{+} holds, or {\rm (ii)} $B$ is independent of $p$ and $u_0$ is a subsolution of \eqref{1.1}. Then there exists a unique admissible solution $u\in C^{3,\alpha}(\bar \Omega)$ of the boundary value problem \eqref{1.1}-\eqref{1.2} for any $\alpha<1$.
\end{Theorem}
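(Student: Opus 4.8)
The plan is to prove existence by the method of continuity, combining the a priori estimates developed above with the linear theory of oblique derivative problems. First I would embed \eqref{1.1}-\eqref{1.2} in a one-parameter family of oblique boundary value problems, $\mathcal F[u] = B_\sigma(\cdot,u,Du)$ in $\Omega$, $\mathcal G_\sigma[u] = 0$ on $\partial\Omega$ for $\sigma \in [0,1]$, with the problem at $\sigma = 1$ being \eqref{1.1}-\eqref{1.2} and the problem at $\sigma = 0$ readily solvable (for instance by deforming the data so that $\underline u$, or $u_0$, becomes an exact solution), the deformation arranged so that for every $\sigma$ the pair $\underline u$, $\bar u$ remains an admissible subsolution and supersolution in the sense of \eqref{sub-} and \eqref{sup-}, $B_\sigma$ remains convex in $p$, bounded below by $a_0$ and non-decreasing in $z$, $\mathcal G_\sigma$ remains oblique and tangentially uniformly concave in $p$, and, in case (ii), $B_\sigma$ remains independent of $p$ with $u_0$ a subsolution of the $\sigma$-equation. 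Since $A$, $B$ and $-G$ are non-decreasing in $z$ with one of them increasing, the comparison principle of Section 4.1 in \cite{JT-oblique-I} yields $\underline u \le u_\sigma \le \bar u \le u_0$ in $\bar\Omega$ for any admissible solution $u_\sigma$ of the $\sigma$-problem, so that $u_0$ is available throughout as the admissible function ``$\bar u$'' required by Theorem \ref{Th3.2} (using $M_{u_\sigma}[u_0] \ge M[u_0] \in \Gamma$, since $A$ is non-decreasing in $z$ and $u_\sigma \le u_0$), and, in case (ii), $u_0$ then also satisfies the subsolution inequality \eqref{subsolution} relative to $u_\sigma$.

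Next I would assemble the closed chain of a priori estimates for admissible solutions $u_\sigma$, with constants independent of $\sigma$. The bound $|u_\sigma|_{0;\Omega} \le C$ is immediate from $\underline u \le u_\sigma \le \bar u$. For the gradient bound, admissibility in $K^+$ forces $D^2 u_\sigma \ge A(\cdot,u_\sigma,Du_\sigma)$, so \eqref{A lower quadratic} (respectively \eqref{A stronger quadratic}) yields \eqref{u lower bound} (respectively \eqref{stronger quadratic}), and Lemma \ref{Lemma 4.1} gives $|Du_\sigma|_{0;\Omega} \le C$. The global second derivative bound $|D^2 u_\sigma|_{0;\Omega} \le C$ is exactly Theorem \ref{Th3.2}, applied via Remark \ref{Remark3.4} so that the tangentially locally uniformly concave $G$ may be replaced by $\tilde G = 1 - e^{-cG}$, which is locally uniformly concave in $p$; with $D_{pp}A = 0$ on $\partial\Omega$ the uniform $(\Gamma,A,G)$-convexity reduces to the $(\Gamma,A)$-convexity of $\Omega$ with respect to $\mathcal I = [\underline u, \bar u]$, independent of $G$ and $z$, and the remaining hypotheses of Theorem \ref{Th3.2} hold uniformly along the family once the $C^1$ bounds do. Then the global H\"older estimate for second derivatives of oblique problems from \cite{LieTru1986, Tru1984} (with F3) gives $|u_\sigma|_{2,\alpha;\Omega} \le C$ for some $\alpha \in (0,1)$, the linear Schauder theory of \cite{GTbook} upgrades this to $|u_\sigma|_{3,\alpha;\Omega} \le C$ for every $\alpha < 1$, and these bounds (together with F1, F3 and the strict bound $B > a_0$) confine $M[u_\sigma]$ to a fixed compact subset of $\Gamma$, so that admissibility of $u_\sigma$ is stable under $C^2$-convergence and under small $C^2$-perturbations.

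Then I would run the continuity argument itself. Let $\mathcal S \subset [0,1]$ be the set of $\sigma$ for which the $\sigma$-problem has an admissible solution in $C^{3,\alpha}(\bar\Omega)$; then $\mathcal S \neq \emptyset$ since $0 \in \mathcal S$. Closedness of $\mathcal S$ follows from the uniform $C^{3,\alpha}$ bound and the stability of admissibility, via Arzel\`a--Ascoli. Openness follows from the implicit function theorem in H\"older spaces: at an admissible solution $u_\sigma$ the Fr\'echet derivative of $u \mapsto (\mathcal F[u] - B_\sigma(\cdot,u,Du),\, \mathcal G_\sigma[u])$ is a linear oblique operator of the form $(\mathcal L_\sigma v,\, \beta \cdot Dv + c_0 v)$ which is a linear isomorphism between the relevant H\"older spaces, being uniformly elliptic (as $M[u_\sigma] \in \Gamma$ and F1 holds), oblique (by \eqref{obliqueness}), and carrying the correct sign structure on the zeroth-order terms --- nonpositive coefficient in $\Omega$ and nonpositive coefficient in the boundary operator, strict somewhere because one of $A$, $B$, $-G$ is increasing in $z$ --- so that the maximum principle and the Fredholm alternative for oblique derivative problems apply (see \cite{GTbook} and Section 4 in \cite{JT-oblique-I}). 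Hence $\mathcal S = [0,1]$, which produces an admissible solution $u = u_1 \in C^{3,\alpha}(\bar\Omega)$ of \eqref{1.1}-\eqref{1.2} for every $\alpha < 1$; uniqueness is once again the comparison principle under the monotonicity hypotheses.

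The step I expect to be the main obstacle is organisational rather than conceptual: designing the homotopy so that the entire list of structural hypotheses of Theorem \ref{Th3.2} and Lemma \ref{Lemma 4.1} --- the subsolution/supersolution pair, the $p$-convexity of $B_\sigma$, the tangential uniform concavity of $\mathcal G_\sigma$, the $z$-monotonicity, and, in case (ii), the subsolution property of $u_0$ --- holds simultaneously for all $\sigma \in [0,1]$, and with the obliqueness, concavity and $(\Gamma,A)$-convexity constants uniform in $\sigma$ so that the second derivative bound does not degenerate at either endpoint. The one point requiring genuine care on the analytic side is the invertibility of the linearised oblique problem in the openness step, which is precisely where the hypothesis that one of $A$, $B$, $-G$ is strictly increasing in $z$ is used.
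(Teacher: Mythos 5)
Your proposal is correct and follows essentially the same route as the paper, which itself only remarks that Theorem \ref{Th4.1} ``is readily proved by the method of continuity'' once the comparison-principle solution bounds, the gradient bound of Lemma \ref{Lemma 4.1} (via admissibility in $K^+$ together with \eqref{A lower quadratic} or \eqref{A stronger quadratic}), the second derivative bound of Theorem \ref{Th3.2} (with Remark \ref{Remark3.4} handling the tangential concavity and $u_0$ serving as the admissible function), and the H\"older/Schauder estimates are in hand. Your writeup is a faithful, more detailed expansion of that argument, including the correct identification of where the strict $z$-monotonicity of one of $A$, $B$, $-G$ enters in the openness step.
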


Since the condition F5 itself is a consequence of F2 and F4, (see \cite{JT-oblique-I}), we only assume conditions F1-F4, F6 and omit the condition F5 in the statement of Theorem \ref{Th4.1}. The admissible function $u_0$ corresponds to the function $\bar u$ in Lemma \ref{Lemma 2.1} and is not needed in the hypotheses when $A$ and $B$ are independent of $z$. With the {\it a priori} derivative estimates up to second order, Theorem \ref{Th4.1} is readily proved by the method of continuity, similarly to the proof of Theorem 4.1 in \cite{JT-oblique-I}. Here we note that Theorem \ref{Th4.1} requires that the matrix $A(x,z,p)$ is affine in $p$ when $x$ lies in $\partial\Omega$. Such a condition is not needed in the strictly regular case in \cite{JT-oblique-I}. Moreover using Lemma \ref{Lemma 4.1}, we can remove the second inequality in condition (4.3) in the hypotheses of the corresponding existence Theorem 4.2 in \cite{JT-oblique-I} if either $G$ satisfies \eqref{uniform tangential concavity} or $A$ satisfies \eqref{A stronger quadratic}. Note that we also need only assume that $G$ is tangentially concave in Theorem 4.2 in \cite{JT-oblique-I}, (as well as in Theorem 1.2 and Lemma 2.3 in \cite{JT-oblique-I} and Theorems 4.1 and 4.2 in \cite{JTX2015}).

We remark that there is a natural example for a regular matrix $A$ satisfying the assumptions of Theorem \ref{Th4.1}, obtained by taking $A(x,z,p)=\zeta(x) \bar A(x,z,p)$ for  a non-negative function $\zeta \in C^2(\bar \Omega)$ vanishing on
$\partial\Omega$ and a non-decreasing regular matrix $\bar A\in C^2(\bar \Omega \times \mathbb{R}\times \mathbb{R}^n)$. In such an example, and more generally when $ D_p A = 0$ on $\partial\Omega$, uniform $A$-convexity reduces to simply uniform convexity.

Next, we consider a global gradient estimate for admissible solutions of the problem \eqref{1.1}-\eqref{1.2} for semilinear $\mathcal{G}$, under suitable conditions on $\varphi, A$ and $B$ corresponding to the hypotheses of the second derivative  estimate in Theorem 3.3. For simplicity we will assume that $\varphi$, $A$ and $B$ are all non-decreasing  with respect to $z$ and as in Remark \eqref{Remark3.8}, $\varphi$ is affine in $z$, that is
\begin{equation}\label {linear bc}
  \varphi(\cdot, z) = \gamma z + \varphi_0,
 \end{equation}
  where $\gamma \ge 0$ and $\varphi_0$ are functions on $\partial\Omega$. Also as in Remark 3.8 we will also consider the special case when $A = A_0(x,z)$ is independent of $p$, as well as $B =B_0(x,z)$, although as we shall indicate below such restrictions are not needed in the proof of the case  when $A_z$ is large.

\begin{Lemma}\label{Lemma 4.2}
Let $u\in C^3(\Omega)\cap C^2(\bar \Omega)$ be an admissible solution
of the boundary value problem \eqref{1.1}-\eqref{1.2} for an oblique,
semilinear boundary operator $\mathcal{G}$ in a $C^{2,1}$ domain
$\Omega\subset \mathbb{R}^n$. Assume that $\mathcal{F}$ satisfies
F1-F4, $A = A_0 \in C^2(\bar \Omega\times \mathbb{R})$, $B=B_0>a_0,\in
C^2(\bar \Omega\times \mathbb{R})$, $\beta\in C^2(\partial\Omega)$,
$\varphi \in C^2(\partial\Omega\times \mathbb{R})$ is given by
\eqref{linear bc} with $A, B$ and $\varphi$ non-decreasing in $z$.
Then there exist constants $K$ depending on $F, \Omega$ and $\beta$
such that if either {\rm (i)} $A_z(\cdot,u) \ge K I$ in $\Omega$ or
{\rm (ii)} $\gamma \ge K$ on $\partial\Omega$ and $\Omega$ is
uniformly $\Gamma$-convex, we have the estimate
\begin{equation}\label{gb Lemma4.2}
\sup_\Omega|Du| \le C,
\end{equation}
where $C$ depends on $F, A, B, \Omega, \beta, \varphi$ and $|u|_{0;\Omega}$.
\end{Lemma}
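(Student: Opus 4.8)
The plan is to extend the gradient estimates of \cite{JT-oblique-I} by the auxiliary-function and barrier method, using the monotonicity hypotheses in the way that Lemma \ref{Lemma 3.2} and Theorem \ref{Th3.3} use them for second derivatives. Since $A=A_0(x,z)$ and $B=B_0(x,z)$ are independent of $p$, the linearised operator collapses to $\mathcal{L}=L=F^{ij}D_{ij}$, and differentiating \eqref{1.1} once in the $x_k$-direction gives
\[
\mathcal{L}u_k = F^{ij}\bigl(D_{x_k}A_{0,ij} + u_k D_zA_{0,ij}\bigr) + D_{x_k}B_0 + u_k D_zB_0 .
\]

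First I would reduce the estimate to the boundary. For $v=\tfrac12|Du|^2$ the differentiated equation yields
\[
\mathcal{L}v = F^{ij}u_{ki}u_{kj} + F^{ij}(D_zA_0)_{ij}\,|Du|^2 + F^{ij}(D_xA_0)_{ij}\!\cdot\!Du + D_xB_0\!\cdot\!Du + (D_zB_0)\,|Du|^2 .
\]
As $A$ and $B$ are non-decreasing in $z$ and $F^{ij}\ge0$, the last summand and $F^{ij}u_{ki}u_{kj}$ are non-negative while the $x$-derivative terms are bounded below by $-C(1+\mathscr{T})|Du|$. In case (i), $D_zA_0\ge KI$ forces $F^{ij}(D_zA_0)_{ij}\ge K\mathscr{T}$, and since $\mathscr{T}\ge c_0>0$ under F1--F4 (as in \cite{JT-oblique-I}), we get $\mathcal{L}v>0$ wherever $|Du|$ is large, so $v$ has no large interior maximum and $\sup_\Omega|Du|\le C(1+\sup_{\partial\Omega}|Du|)$. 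In case (ii) the same reduction follows instead from the interior gradient estimates already available in Section 3 of \cite{JT-oblique-I} for admissible solutions of $p$-independent equations (immediate from the semiconvexity $D^2u\ge A_0\ge -CI$ when $\Gamma=K^+$). In either case it remains to bound $|Du|$ on $\partial\Omega$.

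For the boundary estimate I would run a barrier argument on a boundary neighbourhood $\Omega_\rho$, paralleling the proof of Lemma \ref{Lemma 3.2}; since $G$ is semilinear there is no concavity of $G$ in $p$ to exploit, so the Taylor-expansion device of Lemma \ref{Lemma 4.1} is replaced by the monotonicity. After normalising $\beta\cdot\nu=1$, I would take an auxiliary function built from $|Du|^2$, a boundary-adapted weight, and (in case (ii)) the barrier $\phi$ furnished by uniform $\Gamma$-convexity; the computation above together with, in case (i), the term $F^{ij}(D_zA_0)_{ij}\ge K\mathscr{T}$, and in case (ii) the standard interior gradient computation, shows this function has no interior maximum, so it is maximised on $\partial\Omega$. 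There the once-tangentially-differentiated boundary condition $D_\beta u=\gamma u+\varphi_0$ produces an inequality in which the coefficient $\varphi_z=\gamma\ge K$ carries the correct sign to absorb the error terms generated by $\beta^\prime=\beta-\nu$ and the boundary geometry, exactly as $\varphi_z$ large does in Lemma \ref{Lemma 3.2}(i). This bounds $|Du(x_0)|$ (with $C$ permitted to depend on $\varphi$, hence on $\gamma$), whence $\sup_{\partial\Omega}|Du|\le C$, and combined with the interior reduction we obtain \eqref{gb Lemma4.2}.

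I expect the boundary step to be the main obstacle: one must produce an argument valid in both regimes in which the threshold $K$ is governed only by $F$, $\Omega$ and $\beta$. Thus the curvature of the barrier (controlled by $\Omega$, and by the uniform $\Gamma$-convexity in case (ii)) and the strict differential inequality that F1--F4 extract from it have to be matched against the error terms coming from $\beta^\prime$ and the boundary, so that largeness of $\gamma$ alone (respectively of $A_z$) closes the argument; the dependence of $C$, but not of $K$, on $\varphi$ and $A$ is what makes this consistent. Once the boundary inequality is set up, the interior reduction and the passage from a normal-derivative bound to a full gradient bound via the oblique condition are comparatively routine.
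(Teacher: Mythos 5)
Your interior computation for $\tfrac12|Du|^2$ and the identification of where the monotonicity of $A,B$ in $z$ enters are correct, but the overall two-step structure (reduce to the boundary, then prove a boundary estimate) does not match the logic of the hypotheses, and each case has a genuine gap. In case (i) the hypothesis $A_z\ge KI$ produces only an \emph{interior} differential inequality; it contributes nothing to a boundary inequality for the gradient. Since in case (i) one only has $\gamma\ge 0$, your proposed boundary step --- which explicitly relies on ``$\varphi_z=\gamma\ge K$ carrying the correct sign'' --- is unavailable, and no mechanism is left to bound $|Du|$ on $\partial\Omega$. The paper resolves this by going in the opposite direction: it uses the auxiliary function $v=g+\kappa M_1^2\phi+\chi u^2$ with $g=|\delta u|^2+|D_\beta u-\varphi(\cdot,u)|^2$, whose oblique derivative on $\partial\Omega$ can be computed from the boundary condition alone (this is why $g$, and not $\tfrac12|Du|^2$, must be used: $D_\beta(\tfrac12|Du|^2)$ contains the uncontrolled term $u_\nu u_{\nu\beta}$), and chooses $\kappa$ so that $D_\beta v>0$ on $\partial\Omega$, forcing the maximum \emph{into} the interior where $A_z\ge KI$ closes the argument.

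In case (ii) your reduction to the boundary rests on semiconvexity $D^2u\ge A_0\ge -CI$, which you yourself flag as valid ``when $\Gamma=K^+$''; but the lemma assumes only that $\Omega$ is uniformly $\Gamma$-convex for a general cone $\Gamma$, and admissibility for such $\Gamma$ does not give a pointwise lower bound on $D^2u$. Moreover the bare inequality $\mathcal Lv\ge F^{ij}u_{ki}u_{kj}-C\mathscr{T}|Du|-C|Du|$ has no positive term proportional to $\mathscr{T}|Du|^2$ in this case, so the interior maximum principle does not close. The paper instead extends the boundary barrier to a \emph{global} one, $\mathcal L\phi\le-\sigma\mathscr{T}$ in all of $\Omega$ (by a mollified minimum of $d-d^2/2\rho$ with a quadratic), takes $v=g-\kappa M_1^2\phi+\chi u^2$, forces the maximum onto $\partial\Omega$, and only there uses $\gamma\ge K$ in the inequality $0\ge D_\beta v\ge(\gamma-C_0)|\delta u|^2-\kappa M_1^2-C'$ to bound the tangential gradient and hence $M_1$. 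In short: the two largeness hypotheses are used at the two different possible locations of the maximum of a single global auxiliary function, with the sign of the $\kappa M_1^2\phi$ term chosen to steer the maximum to the location where the relevant hypothesis is effective; your proposal attempts to use both hypotheses on the same side of the argument and leaves each case with an unprovable half.
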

\begin{proof}
The proof depends on careful examination and modification of the proof
of case (i) in Theorem 1.3 in \cite{JT-oblique-I}. First we need to
refine the differential inequality (3.9) there with
\begin{equation} \label{g}
g= |\delta u|^2 + |D_\beta u - \varphi(\cdot,u)|^2
\end{equation}
to obtain, under our assumed conditions on $\varphi, A$ and $B$,
\begin{equation} \label{Lg}
\mathcal L g \ge a_1[\mathcal E^\prime_2 + F^{ij}(D_zA_{ij}) |Du|^2] -
C_0( \gamma^2 F^{ij}u_iu_j + \mathscr{T} |Du|^2) - C^\prime (1 +
\mathscr{T}) ,
\end{equation}
where $a_1 = (1 + \beta^\prime_0)^{-2}$, $\beta^\prime_0 = \sup
|\beta^\prime|$ and $C_0$ is a constant depending on $\beta$ and
$\Omega$ while the constant $C^\prime$ depends also on $\varphi, A, B$
and $M_0 = |u|_{0;\Omega}$. Here we normalise $\beta\cdot\nu = 1$ and
write $\beta^\prime = \beta - \nu$, as in the proof of Lemma
\ref{Lemma 3.2}.

In place of (3.13) in \cite{JT-oblique-I}, we will employ auxiliary
functions of the form
\begin{equation}\label{3.13}
v:= g + (-) \kappa M^2_1 \phi + \chi u^2,
\end{equation}
where $g$ is given by \eqref{g}, $\phi\in C^2(\bar\Omega)$ is a
positive defining function for $\Omega$ satisfying $\phi=0$ on
$\partial\Omega$ and $D_\nu \phi =1$ on $\partial \Omega$,
$M_1=\sup\limits_\Omega |Du|$ and
$\kappa$ and $\chi$ are suitable positive constants to be determined.
Then we have on the boundary $\partial\Omega$,
\begin{equation}\label{Dbetav}
\begin{array}{rl}
D_\beta v \ge \!\!&\!\!\displaystyle 2\gamma |\delta u|^2 - 2 \delta_k
u [(\delta_k\beta_i) D_i u + \beta_i\nu_k(D_i\nu_l)D_lu + \beta_i
(D_i\nu_k)D_\nu u - \delta_{k} \varphi_0] \\
\!\!&\!\!\displaystyle +(-)\kappa M^2_1 + 2 \chi u \varphi \\
\ge \!\!&\!\!\displaystyle (\gamma- C_0) |\delta u|^2  + (-) \kappa
M^2_1 - C^\prime (1+\chi),
\end{array}
\end{equation}
while in $\Omega$, we have from \eqref{Lg}, taking $\chi = C_0(\sup|\gamma|)^2$,
\begin{equation}\label {Lv}
\mathcal L v \ge F^{ij}(D_zA_{ij}) |Du|^2 +(-) \kappa M^2_1
\mathcal{L}\phi - C_0 \mathscr{T} |Du|^2  - C^\prime (1 +
\mathscr{T}).
\end{equation}
Now it is convenient to separate cases (i) and (ii). For case (i) we
will use the ``$+$'' sign in \eqref{3.13} and choose $\kappa$ large
enough so that $D_\beta v > 0$ on $\partial\Omega$ so that the maximum
of $v$ must occur at an interior point of $\Omega$, whence the
estimate \eqref{gb Lemma4.2} follows from \eqref{Lv}.

For case (ii) we  take the ``$-$'' sign in \eqref{3.13} and use
the uniform $\Gamma$-convexity of $\Omega$ to choose $\phi$ so that
\begin{equation}\label{global barrier}
\mathcal{L}\phi \le -\sigma\mathscr T ,\quad {\rm in} \ \Omega,
\end{equation}
for a positive constant $\sigma$. Such a function $\phi =
d-d^2/2\rho$, is already used in the proof of Lemma \ref{Lemma 3.1}
where
\eqref{global barrier} is satisfied in a neighbourhood $\Omega_\rho$
of $\partial\Omega$. In our situation here the constants $\rho$ and
$\sigma$ depend only on $\Omega$ and $\Gamma$. To extend $\phi$ to all of $\Omega$, we may fix a point $x_0\in
\Omega$ and define the truncated function $\phi_h =m_h\{\phi,\eta\}$ in
$\Omega_\rho$, where $m_h$ is the mollification of the min-function of two variables,
\begin{equation}
\eta(x) = \frac{(2D^2-|x-x_0|^2)\rho}{8D^2 }
\end{equation}
and $D$ is the diameter of $\Omega$. We then replace $\phi$ itself in
$\Omega_\rho$ by  $\phi_h$ for $h$ sufficiently small, say $h<
\rho/16$ , with $\phi = \eta$ outside $\Omega_\rho$. Then we obtain
\eqref{global barrier} with $\sigma$ replaced by min$\{\sigma,
\rho/4D^2\}$. The estimate \eqref{gb Lemma4.2} follows from
\eqref{Dbetav} by choosing $\kappa={2C_0}/{\sigma}$ in \eqref{Lv} so
that  $\mathcal L v > 0$ for $M_1$ sufficiently large and the maximum
of $v$ is taken on $\partial\Omega$.
\end{proof}

\begin{remark}\label{Remark4.1}
Similarly to Remark \ref{Remark3.6} we can use \eqref{Dbetav} to
replace $\gamma$ in case (ii) of Lemma \ref{Lemma 4.2} by
$\gamma + \kappa_1$ where $\kappa_1$ is the minimum curvature of
$\partial\Omega$. The estimate \eqref{gb Lemma4.2}
would still hold even if $\gamma$ were negative on part of
$\partial\Omega$ provided the curvature was sufficiently large on such
a region.
\end{remark}

\begin{remark}\label{Remark4.2}
In case (i) of Lemma \ref{Lemma 4.2}, we remark that $A$ and $B$ can take more general forms
which have  dependence on $p$, in particular \eqref{special A}, \eqref{special B} with
 $D_pA_1, D_pB_1 \in L^\infty(\mathbb{R}^n)$; see also Remark 3.5 in  \cite{JT-oblique-I}.
\end{remark}

Combining the gradient estimate in Lemma \ref{Lemma 4.2} and the
second derivative estimate in Theorem \ref{Th3.3}, we can establish
an existence theorem for semilinear oblique boundary value problem
\eqref{1.1}-\eqref{1.2} under strong monotonicity conditions for $G$
or $A$ with respect to $z$.
\begin{Theorem}\label{Th4.2}
Assume that $F$ satisfies conditions F1-F4, $\Omega\in C^{3,1}$ is
uniformly $\Gamma$-convex, $A = A_0 \in C^2(\bar \Omega\times
\mathbb{R})$, $B = B_0 >a_0,\in C^2(\bar \Omega\times \mathbb{R})$, $\beta\in
C^2(\partial\Omega)$, $\varphi \in C^2(\partial\Omega\times
\mathbb{R})$ is given by \eqref{linear bc} with $A, B$ and $\varphi$
non-decreasing in $z$. Then there exist constants $K$ depending on $F,
\Omega$ and  $\beta$  such that if either  $\gamma \ge K$ on
$\partial\Omega$ or  $A_z \ge K I$  in $\Omega\times\mathbb{R} $,
there exists a unique
admissible solution $u \in C^{3,\alpha}(\bar \Omega)$ of the boundary
value problem \eqref{1.1}-\eqref{1.2} for any $\alpha<1$.
\end{Theorem}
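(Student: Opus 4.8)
The plan is to prove Theorem \ref{Th4.2} by the method of continuity, just as Theorem \ref{Th4.1} is (and as in the proof of Theorem~4.1 in \cite{JT-oblique-I}), the essential inputs being the global gradient estimate of Lemma \ref{Lemma 4.2} and the global second derivative estimate of Theorem \ref{Th3.3} in the special case $\tilde A = 0$ of Remark \ref{Remark3.8}. First I would fix $K$ as the maximum of the thresholds occurring in Lemma \ref{Lemma 4.2} and Theorem \ref{Th3.3}; since $A=A_0(x,z)$ and $B=B_0(x,z)$ are independent of $p$ (so $\tilde A=0$), all these thresholds depend only on $F,\Omega$ and $\beta$, and a single such $K$ exists. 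I would then join the given problem $P_1$ to an explicitly solvable problem $P_0$ by a family $P_t$, $t\in[0,1]$: keep $A=A_0$ and $\gamma$ unchanged and linearly interpolate $B_0$ and $\varphi_0$ between the given data and data $B_0^0:=F(M[\psi])$, $\varphi_0^0:=D_\beta\psi-\gamma\psi$ for which a fixed admissible quadratic $\psi=\tfrac{\epsilon_0}{2}|x|^2+c$ (with $\epsilon_0$ large and $c$ sufficiently negative so that $\psi\le0$ on $\bar\Omega$ and hence $M[\psi]=\epsilon_0 I-A_0(\cdot,\psi)\ge\epsilon_0 I-A_0(\cdot,0)\in K^+\subset\Gamma$) solves $P_0$. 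Such an interpolation preserves every structural hypothesis of the theorem: $F$ obeys F1-F4, $A_0$ and $B_0$ are independent of $p$ and nondecreasing in $z$, $B^t>a_0$, $\mathcal{G}^t$ is oblique and semilinear with $\varphi^t$ affine in $z$ as in \eqref{linear bc} and nondecreasing in $z$, $\Omega$ stays uniformly $\Gamma$-convex, and either $\gamma\ge K$ on $\partial\Omega$ or $A_z\ge KI$ in $\Omega\times\mathbb{R}$ continues to hold. Hence the set $\mathcal{S}$ of all $t\in[0,1]$ for which $P_t$ possesses an admissible solution in $C^{3,\alpha}(\bar\Omega)$ is nonempty, and it remains to prove $\mathcal{S}$ open and closed in $[0,1]$.

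For closedness I need \emph{a priori} estimates uniform in $t$. A bound $|u|_{0;\Omega}\le C$ comes from the maximum principle and the strong $z$-monotonicity: when $\gamma\ge K$ the oblique condition controls boundary extrema of $u$, and when $A_z\ge KI$ the admissibility $M[u]\in\Gamma$ together with the equation controls interior extrema; this is the ``alternative condition'' of the type mentioned before Theorem~4.1 in \cite{JT-oblique-I}, replacing the sub/supersolution hypothesis used there. Next $|Du|_{0;\Omega}\le C$ follows from Lemma \ref{Lemma 4.2}, via case (i) if $A_z\ge KI$ and case (ii) if $\gamma\ge K$ (using the uniform $\Gamma$-convexity of $\Omega$). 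Then $|D^2u|_{0;\Omega}\le C$ follows from Theorem \ref{Th3.3} specialised to $\tilde A=0$: F5 is a consequence of F2 and F4, the boundary operator is semilinear with $\varphi$ affine in $z$, and $|x|^2$ plays the role of the admissible function $\bar u$ (Remarks \ref{Remark3.1} and \ref{Remark3.8}), so neither an admissible $\bar u$ nor the subsolution condition \eqref{subsolution} is required. Global H\"older estimates $|D^2u|_{\alpha;\Omega}\le C$ then come from \cite{LieTru1986,Tru1984}, and differentiating \eqref{1.1} and applying the linear Schauder theory of \cite{GTbook} to the resulting uniformly oblique linear problem for $D_ku$ gives $|u|_{3,\alpha;\Omega}\le C$; all constants are independent of $t$ because the family preserves the structural data. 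Thus $\mathcal{S}$ is closed by Arzel\`a--Ascoli, admissibility passing to the limit since the $C^2$ bound and $B^t>a_0$ keep $M[u]$ in a fixed compact subset of $\Gamma$.

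Openness is obtained by linearising at a solution $u_t$ of $P_t$: the linearised operator $w\mapsto F^{ij}(M[u_t])D_{ij}w-(F^{ij}D_zA_{ij}+D_zB)\,w$, with boundary operator $w\mapsto\beta\cdot Dw-\gamma w$, is uniformly elliptic by F1 and uniformly oblique, has nonpositive zeroth-order coefficients, and has a strictly negative zeroth-order coefficient either in the equation (if $A_z\ge KI>0$) or on the boundary (if $\gamma\ge K>0$); the maximum principle for oblique problems then gives trivial kernel, so the linear problem is an isomorphism of the relevant H\"older spaces and the implicit function theorem makes $P_s$ solvable with an admissible $C^{3,\alpha}$ solution for $s$ near $t$. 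Hence $\mathcal{S}=[0,1]$ and $P_1$ is solvable. Uniqueness is immediate from the comparison principle of Section~4.1 in \cite{JT-oblique-I}: $A,B,\varphi$ are nondecreasing in $z$ with $\varphi$ (if $\gamma\ge K>0$) or $A$ (if $A_z\ge KI>0$) strictly increasing, so two admissible solutions coincide. I expect the only genuinely hard ingredient --- the global second derivative bound with merely regular $A$ --- to be already in hand through Theorem \ref{Th3.3}, where the lack of strict regularity is offset by the strong $z$-monotonicity; the remaining work is organisational, namely choosing the single threshold $K$ so that the $C^0$, gradient and second derivative estimates all take effect at once, and designing $P_t$ so that this $K$, the uniform $\Gamma$-convexity and the $z$-monotonicity persist along the deformation, ensuring that every \emph{a priori} bound is uniform in $t$.
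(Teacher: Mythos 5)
Your proposal follows essentially the same route as the paper: the method of continuity combined with the \emph{a priori} solution bound obtained from the strong $z$-monotonicity (interior extrema controlled via the Laplacian subsolution inequality from admissibility and $\Gamma\subset\Gamma_1$ when $A_z\ge KI$, boundary extrema via the oblique condition when $\gamma\ge K$, or alternatively explicit sub/supersolutions of the form $\pm(c_1\phi-c_0)$), the gradient estimate of Lemma \ref{Lemma 4.2}, the second derivative estimate of Theorem \ref{Th3.3} with $\tilde A=0$ so that $|x|^2$ replaces $\bar u$, the H\"older and Schauder theory for higher regularity, and the comparison principle for uniqueness. The explicit deformation path and openness argument you supply are standard details the paper delegates to \cite{GTbook}, so there is no substantive difference.
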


\begin{proof}
Since we are not assuming the existence of sub and
supersolutions, as for example in Theorem \ref{Th4.1}, we need {\it a priori}
solution bounds in order to apply the method of continuity. These
follow from standard arguments when $A$ and $B$ are independent of $p$
and we need only assume either $A_z$ or $\gamma$ is positive, that is
the constant $K$ need only be assumed positive. Upper bounds follow immediately from the admissibility of $u$ and the property of the cone $\Gamma\subset\Gamma_1$, which implies the simple Laplacian subsolution inequality,
\begin{equation}
\Delta u \ge A_{ii}(\cdot,u)\ge c_0 u + A_{ii}(\cdot,0),
\end{equation}
whenever $u\ge 0$, where $c_0 = \inf (D_zA_{ii})$.
To get a lower bound in the case when $A_z\ge KI$, we consider an
auxiliary function
\begin{equation}
v =\kappa \phi - u,
\end{equation}
where $\kappa$ is a positive constant and as usual $\phi\in
C^2(\bar\Omega)$ is a positive defining function for $\Omega$
satisfying $\phi=0$ on $\partial\Omega$ and $D_\nu \phi =1$ on
$\partial \Omega$. By taking $\kappa$ sufficiently large, we have
$D_\beta v >0$ on $\partial\Omega$ whenever $v\ge 0$ so that $v$ must take a positive maximum
in $\Omega$. Since
\begin{equation}
\begin{array}{ll}
 Lv \!\!&\!\!\displaystyle = \kappa L\phi -F^{ij}w_{ij} - F^{ij}A_{ij}\\
  \!&\!\!\displaystyle \ge -K\mathscr T u - C(1+\kappa) (1+\mathscr T),
\end{array}
\end{equation}
using \eqref{homogeneity}, we obtain a lower bound for $u$ at a maximum point of $v$ and hence in
$\Omega$. In the case $\gamma \ge K$, we replace $v$ by
\begin{equation}
v = \kappa \eta - u
\end{equation}
where $\eta(x) = |x|^2$ and choose $\kappa$ large enough so that $v$
takes a maximum on $\partial \Omega$. In both cases we obtain a lower bound for $u$.

Then we conclude the following maximum modulus estimate for $u$,
\begin{equation}\label{solution bound}
\sup_\Omega|u| \le C,
\end{equation}
with constant $C$ depending on $F, A, B, \beta, \gamma, \varphi_0$ and $\Omega$.

Alternatively, using condition F4, we can show functions of the form $+(-)( c_1\phi -c_0)$ are respectively admissible subsolutions and supersolutions of  \eqref{1.1}-\eqref{1.2} for sufficiently large constants $c_0$ and $ c_1$ in the case where $A_z\ge K I$, while functions of the form $+(-)( c_1\eta -c_0)$ are respectively admissible subsolutions and supersolutions of  \eqref{1.1}-\eqref{1.2} in the case $\gamma\ge K$.

Using the {\it a priori} solution estimate \eqref{solution bound}, gradient estimate \eqref{gb Lemma4.2} in Lemma \ref{Lemma 4.2}, and the second derivative estimate in Theorem \ref{Th3.3}, we obtain the existence of admissible solution $u \in C^{3,\alpha}(\bar \Omega)$ of the boundary value problem \eqref{1.1}-\eqref{1.2} for any $\alpha<1$ using the method of continuity in \cite{GTbook}. The uniqueness follows from the comparison principle.
\end{proof}

For the same reason as in Theorem \ref{Th4.1}, we omit the condition F5 for $F$ in Theorem \ref{Th4.2}. When $\gamma \ge K$ on $\partial\Omega$, Theorem \ref{Th4.2} embraces the standard Hessian equations as special cases. When $A_z \ge K I$  in $\Omega\times\mathbb{R}$, Theorem \ref{Th4.2} permits oblique boundary condition of the form $D_\beta u = \varphi_0(x)$ on $\partial\Omega$. When $\Gamma=K^+$, Theorem \ref{Th4.2} applies to the uniformly convex domain $\Omega$ in the usual sense. Note also that we do not require the orthogonal invariance of $\mathcal F$ in Theorem \ref{Th4.2}, nor in Theorem \ref{Th4.1} when $A$ and $B$ are independent of $p$.





\begin{thebibliography}{99999}

\bibitem{B.Andrews} B. Andrews, Contraction of convex hypersurfaces in Euclidean space, Calc. Var. PDE., 2, 151-171, 1994.









\bibitem{G} G. Gerhardt, Closed Weingarten hypersurfaces in Riemannian manifolds, J. Diff. Geom., 43, 612-641, 1996.

\bibitem{GTbook} D. Gilbarg, N.S. Trudinger, Elliptic partial differential equation of second order, Second edition, Springer, Berlin, 1983 (reprinted 2001).




\bibitem{JT2014} F. Jiang, N.S. Trudinger, On Pogorelov estimates in optimal transportation and geometric optics, Bull. Math. Sci., 4, 407-431, 2014.

\bibitem{JT-new} F. Jiang, N.S. Trudinger, Dirichlet boundary value problems for augmented Hessian equations, preprint, 2016.

\bibitem{JT-oblique-I} F. Jiang, N.S. Trudinger, Oblique boundary value problems for augmented Hessian equations I, preprint, 2015. http://arxiv.org/abs/1511.08935

\bibitem{JTX2015} F. Jiang, N.S. Trudinger, N. Xiang, On the Neumann problem for Monge-Amp\`ere type equations, Canad. J. Math., to appear, 2016.

\bibitem{JTY2013} F. Jiang, N.S. Trudinger, X.-P. Yang, On the Dirichlet problem for Monge-Amp\`ere type equations, Calc. Var. PDE., 49, 1223-1236, 2014.

\bibitem{JTY2014} F. Jiang, N.S. Trudinger, X.-P. Yang, On the Dirichlet problem for a class of augmented Hessian equations, J. Diff. Eqns., 258, 1548-1576, 2015.










\bibitem{LieTru1986} G.M. Lieberman, N.S. Trudinger, Nonlinear oblique boundary value problems for nonlinear elliptic equations, Trans. Amer. Math. Soc., 295, 509-546, 1986.



\bibitem{LTU1986} P.L. Lions, N.S. Trudinger, J. Urbas, The Neumann problem for equations of Monge-Amp\`ere type, Comm. Pure Appl. Math., 39, 539-563, 1986.

\bibitem{LT2016} J. Liu, N.S. Trudinger, On classical solutions of near field reflector problems, Discrete Contin. Dyn. Syst., 36(2), 895-916, 2016.






\bibitem{Pog} A.V. Pogorelov, The Minkowski multidimensional problem, J. Wiley, New York, 1978.

\bibitem{Schulz1990} F. Schulz, Regularity theory for quasilinear elliptic systems and Monge-Amp\`ere equations in two dimensions, Lecture Notes Math. 1445, 1990.



\bibitem{Tru1984} N.S. Trudinger, Boundary value problem for fully nonlinear elliptic equations, Proceedings of the Centre for Mathematical Analysis, Australian National University, 8, 65-83, 1984.






\bibitem{Tru2006} N.S. Trudinger, Recent developments in elliptic partial differential equations of Monge-Amp\`ere type, Proc. Int. Cong. Math., Madrid, 3, 291-302, 2006.

\bibitem{Tru2008} N.S. Trudinger, On the prescribed Jacobian equation, Gakuto Intl. Series, Math. Sci. Appl. 20, Proc. Intl. Conf. for the 25th Anniversary of Viscosity Solutions, 243-255, 2008.





\bibitem{TruWang2009} N.S. Trudinger, X.-J. Wang, On the second boundary value problem for Monge-Amp\`ere type equations and optimal transportation, Ann. Scuola Norm. Sup. Pisa Cl. Sci., VIII, 143-174, 2009.


\bibitem{Urbas1987} J. Urbas, The oblique derivative problem for equations of Monge-Amp\`ere type in two dimensions, Proceeding of the Centre for Mathematical Analysis, Australian National University, 12, 171-195, 1987.

\bibitem{Urbas1995} J. Urbas, Nonlinear oblique boundary value problems for Hessian equations in two dimensions, Ann. Inst. Henri Poincare-Analyse Non Linear, 12, 507-575, 1995.


\bibitem{Urbas1998} J. Urbas, Oblique boundary value problems for equations of Monge-Amp\`ere type, Calc. Var. PDE., 7, 19-39, 1998.





\bibitem{vonNessi2010} G.T. von Nessi, On the second boundary value problem for a class of modified-Hessian equations, Comm. Partial Diff. Eqns., 35, 745-785, 2010.

\bibitem{Wang1992} X.-J. Wang, Oblique derivative problems for the equations of Monge-Amp\`ere type, Chinese J. Contemp. Math., 13, 13-22, 1992.



\end{thebibliography}
\end{document}